\newcolumntype{L}[1]{>{\raggedright\let\newline\\\arraybackslash\hspace{0pt}}m{#1}}
\newcolumntype{C}[1]{>{\centering\let\newline\\\arraybackslash\hspace{0pt}}m{#1}}
\newcolumntype{R}[1]{>{\raggedleft\let\newline\\\arraybackslash\hspace{0pt}}m{#1}}
\algnewcommand{\algorithmicgoto}{go to Step}%
\algnewcommand{\Goto}[1]{\algorithmicgoto~\ref{#1}}%
\newcommand{\dps}{\displaystyle}
\renewcommand{\geq}{\geqslant}
\renewcommand{\leq}{\leqslant}
\renewcommand{\ge}{\geqslant}
\renewcommand{\le}{\leqslant}
\newtheorem{remark}{Remark}
\newtheorem{example}{Example}
\newtheorem{proposition}{Proposition}
\newtheorem{theorem}{Theorem}
\newtheorem{lemma}{Lemma}
\newtheorem{assumption}{Assumption}
\newtheorem{definition}{Definition}
\begin{document}

\title{Multiple projection Markov Chain Monte Carlo algorithms on submanifolds}

\author{%
{\sc
Tony Leli\`evre\thanks{Corresponding author. Email:
  tony.lelievre@enpc.fr} and
Gabriel Stoltz\thanks{Email: gabriel.stoltz@enpc.fr}} \\[2pt]
CERMICS, Ecole des Ponts, Marne-la-Vallée, France \\[2pt]
\& MATHERIALS team-project, Inria Paris, France\\[6pt]
{\sc and}\\[6pt]
{\sc Wei Zhang}\thanks{Email: wei.zhang@fu-berlin.de}\\[2pt]
Zuse Institute Berlin,\\
 Takustrasse 7, 14195 Berlin, Germany
}

\maketitle

\begin{abstract}
{  We propose new Markov Chain Monte Carlo algorithms to sample probability
  distributions on submanifolds, which generalize previous methods by allowing
  the use of set-valued maps in the proposal step of the MCMC algorithms. The
  motivation for this generalization is that the numerical solvers used to
  project proposed moves to the submanifold of interest may find several
  solutions. We show that the new algorithms indeed sample the target
  probability measure correctly, thanks to some carefully enforced reversibility property. We demonstrate the interest of the new MCMC algorithms
  on illustrative numerical examples.
~\\
{\bf Keywords}: Markov chain Monte Carlo; hybrid Monte Carlo; submanifold; constrained sampling. 
}
\end{abstract}

\section{Introduction}
\label{sec-intro}
Sampling probability distributions on submanifolds is relevant in various applications. In molecular dynamics and computational statistics for instance, one is often interested in sampling probability distributions on the (zero) level set of some lower-dimensional function~$\xi: \mathbb{R}^d\rightarrow \mathbb{R}^k$, where $1 \le k < d$:
  \begin{align}
    \Sigma = \left\{x \in \mathbb{R}^d~\middle|~ \xi(x)=0 \in \mathbb{R}^k\right\}\,.
    \label{sigma-level-set}
  \end{align}
The function~$\xi$ corresponds in molecular dynamics to molecular constraints
(fixed bond lengths between atoms, fixed bond angles, etc), and/or fixed
values of a reaction coordinate or collective variable as for free energy
calculations~\cite{lelievre2010free,non-equilibrium-2018} or model
reduction~\cite{effective_dynamics,effective_dyn_2017}. In computational
statistics, $\xi$ can be some summary statistics and sampling on $\Sigma$ is
relevant in approximate Bayesian computations~\cite{Tavare1997,Marin2012}. 

Markov chain Monte Carlo (MCMC) methods offer a generic way of sampling
probability distributions. They can be used even when the probability densities are known
up to a multiplicative constant. A prominent class of methods are
Metropolis-Hastings schemes~\cite{MRRTT53,Hastings70}. On~$\mathbb{R}^d$,
MCMC has been extensively studied due to its applications in a wide range of
areas~\cite{Liu2008,riemann-manifold-hmc-2011}. In particular, the
Metropolis-adjusted Langevin Algorithm
(MALA)~\cite{roberts1996,optimal-scaling-langevin1998}, previously
introduced in the molecular dynamics literature~\cite{RDF78}, and hybrid
Monte Carlo (HMC)~\cite{hmc-DUANE1987,neal1993probabilistic} are among the most popular MCMC methods that have been successfully used in applications.

MCMC methods that sample probability distributions on submanifolds have also been considered in the
literature~\cite{Hartmann2008,pmlr-v22-brubaker12,Tony-constrained-langevin2012,goodman-submanifold,hmc-submanifold-tony,LV20},
and have also been implemented in packages~\cite{mici}.
In particular, the authors in \cite{goodman-submanifold} proposed a MCMC
algorithm on submanifolds using a reversible Metropolis random walk, which was
then extended in~\cite{hmc-submanifold-tony} to a generalized HMC scheme on
submanifolds with non-zero gradient forces in the map used in the proposal
step. In contrast to MCMC methods on $\mathbb{R}^d$, MCMC methods on
submanifolds involve constraints in order to guarantee that the Markov chain
stays on the submanifold. As a result, the proposal maps are often nonlinear and are implicitly defined by constraint
equations. It has first been noticed in~\cite{goodman-submanifold} that performing a reversibility
check in the MCMC iterations is important in order to guarantee that the Markov
chain unbiasedly samples the target probability distribution on the
submanifold. In essence, the reversibility check amounts to verifying that the
numerical methods started from a new configuration can effectively go back to
the previous one, which may not be the case when the projection step finds a
different solution to the nonlinear equation under consideration. Let us
mention here that an alternative to solving nonlinear equation is to enforce
the constraint by following the flow of an appropriate ODE, as done
in~\cite{zhang2017} and~\cite{SZ20}, but this will not be discussed further here.

Based on the previous works~\cite{goodman-submanifold,hmc-submanifold-tony}, we study here MCMC methods to sample probability measures on the level set $\Sigma$ in~\eqref{sigma-level-set}, defined as 
\begin{align}
  \nu_{\Sigma}(dx) = \frac{1}{Z_{\nu_{\Sigma}}}\,\mathrm{e}^{-\beta V(x)}\,\sigma_\Sigma(dx)\,,
  \label{measure-mu-on-m}
\end{align}
where $\sigma_\Sigma$ is the surface measure on $\Sigma$ induced by the standard Euclidean scalar product in~$\mathbb{R}^d$, $V:\mathbb{R}^d\rightarrow\mathbb{R}$ is a $C^2$-differentiable potential energy function, $\beta> 0$ is a parameter
(proportional to the inverse temperature in the context of statistical
physics), and $Z_{\nu_{\Sigma}}$ is a normalization constant. The first method is a MCMC
algorithm in state space (i.e.\ the unknowns are~$x$ only), while the
second one is a MCMC algorithm in phase space, where some additional velocity
or momentum variable conjugated to the position variable~$x$ is
introduced. While the new algorithms share similarities with the ones
in~\cite{goodman-submanifold} and \cite{hmc-submanifold-tony}, the main novelty is that
we combine a local property of measure preservation (with a RATTLE scheme to integrate constrained Hamiltonian dynamics),
and a global construction of many solutions to propose new algorithms.
Allowing for several solutions of the constraint equation can be beneficial both because it can reduce the overall rejection rate, and probably more importantly because it may allow for larger, non-local moves. 
The algorithms we propose include a generalization of the ``reverse projection check'' of~\cite{goodman-submanifold} to the situations where multiple projections can be
computed. In particular, when the projection algorithm is able to find all the
possible projections on the manifold (which is for example possible for
algebraic submanifolds), this reverse projection check is not needed anymore and one only needs to count the number of possible projections,
see Remarks~\ref{rmk-Reversibility-check-mala} and~\ref{rmk-Reversibility-check-hmc}. 
We show that the first MCMC algorithm we propose generates a Markov
chain in state space which is \textit{reversible} with respect to the target probability distribution, while the
second one generates a Markov chain in phase space which is
\textit{reversible up to momentum reversal} with respect to the target probability
distribution (see Definitions~\ref{def-reversible}--\ref{def-rmr} in
Section~\ref{sec-two-algorithms}). In the following, we will simply say that a Markov chain is
reversible (possibly up to momentum reversal) when the target probability distribution is clear
from the context. The proofs of the consistency of the new algorithms also reveal the connections between the geometric point of view of~\cite{goodman-submanifold} and the symplectic point of view of~\cite{hmc-submanifold-tony}. 

\medskip

\paragraph{Outline of the work.} This paper is organized as follows. In Section~\ref{sec-two-algorithms}, we
introduce the new MCMC algorithms we propose and state the main results about
their consistency, whose proofs are postponed to Section~\ref{sec-proofs}. We
next demonstrate in Section~\ref{sec-example} the interest of the new MCMC
algorithms on two simple numerical examples. The code used for producing the numerical results in Section~\ref{sec-example} is available at \url{https://github.com/zwpku/Constrained-HMC}.

\paragraph{Notation and assumptions used throughout this work.}
We conclude this section by introducing some useful notation and stating the
assumptions we need on the function $\xi:\mathbb{R}^d\rightarrow\mathbb{R}^k$
with~$1\le k <d$. For any $x\in \mathbb{R}^d$,
$\nabla\xi(x)$ denotes the $d\times k$ matrix whose entries are
$(\nabla\xi(x))_{ij} = \frac{\partial \xi_j}{\partial x_i}(x)$ for~$1\le i \le
d$ and $1\le j\le k$ (i.e.\  the $j$th column of $\nabla \xi$ is $\nabla
\xi_j$). The matrix $I_n \in \mathbb{R}^{n\times n}$ denotes the identity
matrix of order $n$. The number of elements of a finite set~$A$ is denoted
by~$|A|$. The set difference of two sets $A$ and $B$, denoted by $A\setminus
B$, is defined as $A\setminus B=\{x\in A\,|\, x\not\in B\}$. The function~$\mathbf{1}_{A}(\cdot)$ is
the indicator function of the set~$A$. Let $\mathcal{M}$ be a Riemannian manifold of dimension~$m$, where $m \ge 1$.
The Riemannian measure on $\mathcal{M}$ is the canonical measure defined by
the Riemannian metric of $\mathcal{M}$~\cite[Chapter~4.10]{warner1983foundations}.
When $\mathcal{M}$ is a submanifold embedded in $\mathbb{R}^{m'}$, where $m'\ge m$, 
the standard Euclidean scalar product on $\mathbb{R}^{m'}$ induces a Riemannian metric $g$ on $\mathcal{M}$.  
The Riemannian measure on~$\mathcal{M}$ defined by $g$ will be called the
surface measure on $\mathcal{M}$ induced by the standard
Euclidean scalar product on $\mathbb{R}^{m'}$.
Similar terminologies will be adopted when we consider submanifolds of
$\mathbb{R}^{m'}$ endowed with the weighted inner product $\langle u, u'\rangle_{M}=u^TMu'$, for $u,u'\in
\mathbb{R}^{m'}$, where $M\in \mathbb{R}^{m'\times m'}$ is a symmetric positive definite matrix. 
Let $\mathcal{M}_1$ be another Riemannian manifold of dimension~$m_1$, where $m_1 \ge 1$.
When~$f: \mathcal{M}\rightarrow \mathcal{M}_1$ is a $C^1$-differentiable map from $\mathcal{M}$ to~$\mathcal{M}_1$, we denote by $Df(x): T_x
\mathcal{M} \rightarrow T_{f(x)}\mathcal{M}_1$ the differential of~$f$ at
the point~$x \in \mathcal{M}$. When $\mathcal{M}$ is an open subset of $\mathbb{R}^{m}$ 
and $\mathcal{M}_1$ is embedded in~$\mathbb{R}^{m'}$ with~$m'\ge m_1$, we write $\nabla f$ for the usual gradient of $f$, where $f$ is
viewed as a map $f: \mathcal{M} \rightarrow \mathbb{R}^{m'}$ between two
Euclidean spaces. The notations $D_x$ and $\nabla_x$ are also used to emphasize
that the differentiation is with respect to the variable~$x$. 
Given $x\in \mathcal{M}$, we say that $\mathcal{O}$ is a neighborhood of~$x$
if $\mathcal{O}$ is an open subset of~$\mathcal{M}$ such that $x\in
\mathcal{O}$. We will also use some results in differential
topology, for which we refer for instance to~\cite{banyaga2004lectures,lang2002differential}.
In the following let us assume that $m \ge m_1 \ge 1$. The map $f$ is a submersion at~$x\in \mathcal{M}$ if
$Df(x): T_x\mathcal{M}\rightarrow T_{f(x)}\mathcal{M}_1$ is onto. In this
case, $x$ is called a regular point and otherwise~$x$ is a critical point.
Denote by $C\subset \mathcal{M}$ the set of critical points of $f$. A point
$y\in \mathcal{M}_1$ is said to be a regular value if $f^{-1}(y)\cap
C=\emptyset$. Recall that the regular value theorem states that $f^{-1}(y)$ is
an~$(m-m_1)$-dimensional submanifold of~$\mathcal{M}$, provided that $y$
is a regular value and $f^{-1}(y)$ is non-empty. 
If $f:\mathcal{M}\rightarrow \mathcal{M}_1$ is $C^r$ with $r>
\max\{0,m-m_1\}$, Sard's theorem asserts that the image~$f(C)$ of~$C$ has
measure zero as a subset in~$\mathcal{M}_1$~\cite[Chapter 3.1]{hirsch2012differential}. 
In this paper we apply Sard's theorem to the case where $f$ is $C^1$-differentiable and $m=m_1$.

Finally, we also introduce a $C^2$-differentiable potential energy $\overline{V}:\mathbb{R}^d\rightarrow \mathbb{R}$ which can be different from
the function~$V$ in~\eqref{measure-mu-on-m}. Throughout this paper, we make the following assumption. 
\begin{assumption}
  Both functions $V$ and $\overline{V}$ are $C^2$-differentiable. The function
  $\xi:\mathbb{R}^{d}\rightarrow \mathbb{R}^k$ is smooth, and the level set $\Sigma$ is a compact subset of~$\mathbb{R}^d$. For all $x\in
  \Sigma$, the matrix $\nabla\xi(x)^T\nabla\xi(x) \in \mathbb{R}^{k\times k}$ is positive definite.
  \label{assump-xi}
\end{assumption}
\begin{remark}
  The assumption that $\nabla\xi(x)^T\nabla\xi(x)$ is positive definite is equivalent to the assumption that the matrix $\nabla\xi(x)
  \in \mathbb{R}^{d\times k}$ has full rank (i.e.\ rank $k$). This is
  equivalent to the fact that $\nabla\xi(x)^TM^{-1}\nabla\xi(x) \in \mathbb{R}^{k\times k}$
  is positive definite for any symmetric positive definite matrix $M\in \mathbb{R}^{d\times d}$.
  \label{rmk-on-full-rank}
\end{remark}

The regular value theorem and Assumption~\ref{assump-xi} imply that $\Sigma$ is a $(d-k)$-dimensional submanifold of~$\mathbb{R}^d$.

\section{Markov chain Monte Carlo algorithms on~$\Sigma$ and $T^*\Sigma$}
\label{sec-two-algorithms}

In this section, we introduce two MCMC algorithms sampling the probability
measure~$\nu_{\Sigma}$ on the level set $\Sigma$: a first one in the state space $\Sigma$, based on
the standard MALA method (see Section~\ref{subsec-mama-sigma}); and a second
one in the phase space $T^*\Sigma$, based on HMC and its generalizations (see
Section~\ref{subsec-hmc}). Both algorithms use set-valued proposal
maps which encode the numerical projections of an unconstrained move back to the submanifold. Examples of such proposal maps are presented in Section~\ref{sec:numerical_computation_constraint}.

\subsection{Multiple projection Metropolis-adjusted Langevin algorithm on~$\Sigma$}
\label{subsec-mama-sigma}

The first algorithm we consider generates a Markov chain on the submanifold~$\Sigma$. We suppose that Assumption~\ref{assump-xi} holds. We first introduce in Section~\ref{sec:construction_set_valued_MALA} the set-valued proposal map that will be used in the algorithm, the algorithm itself being presented in Section~\ref{sec:presentation_MALA}. Finally, we provide a result on the reversibility of this algorithm in Section~\ref{sec:consistency_MALA}.

\subsubsection{Construction of the set-valued map}
\label{sec:construction_set_valued_MALA}

The objective of this section is to build a map which will be used to propose
a move from a point in $\Sigma$ to another point in $\Sigma$ in the Metropolis-Hastings algorithm.

  \paragraph{Projection of the unconstrained move.}

The modified potential $\overline{V}$ in Assumption~\ref{assump-xi} is
used to define the drift term in the proposal function; see the discussion after Theorem~\ref{thm-mala-on-sigma} below
for further considerations on the choice of~$\overline{V}$. When $\overline{V} = 0$, random walk
type proposals are recovered, while the standard MALA algorithm corresponds to
the choice $\overline{V} = V$.

Given any $x\in \Sigma$, we denote by $U_{x}$ a $d\times (d-k)$ matrix whose
$(d-k)$ column vectors form an orthonormal basis of the tangent space
$T_{x}\Sigma$, so that $U_{x}^TU_{x} = I_{d-k} \in \mathbb{R}^{(d-k)\times (d-k)}$.
For any fixed~$x \in \Sigma$, in view of Assumption~\ref{assump-xi}, we can consider without loss of
generality that the map $x' \mapsto U_{x'}$ is chosen so that it is smooth in
a neighborhood of~$x$ (in $\Sigma$). Such a map can for example be explicitly constructed
by first considering an orthonormal basis~$(u_{x,i})_{1 \leq i \leq d-k}$
of~$T_x \Sigma$. Let us then introduce the orthogonal projector onto~$T_{x'}
\Sigma$, namely $P(x') = I_d - \nabla\xi(x')(\nabla\xi^T\nabla\xi)^{-1}(x')
\nabla\xi(x')^T$, for $x' \in \Sigma$. Then, we compute $v_{x',i} = P(x')u_{x,i}$ and
orthonormalize the family~$(v_{x',i})_{1 \leq i \leq d-k}$ using the
Gram--Schmidt algorithm, which leads to an orthonormal basis~$(u_{x',i})_{1
\leq i \leq d-k}$ of $T_{x'}\Sigma$. This construction is indeed possible in a neighborhood of~$x$ where~$\|P(x)-P(x')\|$ (in any matrix norm) is
sufficiently small so that the family $(v_{x',i})_{1 \leq i \leq d-k}$ is linearly independent. 

Starting from a point $x\in \Sigma$, the proposal map increments the position using a velocity in the tangent space~$T_x\Sigma = \{ U_x v\,|\, v\in\mathbb{R}^{d-k} \}$, adds some drift, and projects back in the direction of~$\nabla\xi(x)$. This is encoded by the function $F_x: \mathbb{R}^{d-k}\times \mathbb{R}^k\rightarrow \mathbb{R}^d$ defined as 
\begin{align}
  F_x(v,c) = x - \tau\nabla \overline{V}(x) + \sqrt{2\tau}U_x v + \nabla\xi(x)c\,,\quad \, v\in \mathbb{R}^{d-k}\,,~c\in \mathbb{R}^k\,,
  \label{fx-mala}
\end{align} 
where $\tau>0$ is a fixed timestep, and $c$ has to be chosen so that $F_x(v,c) \in \Sigma$. More precisely, for all $v\in \mathbb{R}^{d-k}$, we introduce the set of all possible images by~\eqref{fx-mala}:
    \begin{align}
      \begin{split}
     \mathcal{F}_x(v) =& \left\{ F_x(v,c) \in \mathbb{R}^d \,\middle|\, \exists c\in \mathbb{R}^k,~ \xi(F_x(v,c))=0 \right\}\\
	=& \left\{y\in\Sigma ~\middle|~ \exists c \in \mathbb{R}^k,~y = x - \tau\nabla \overline{V}(x) + \sqrt{2\tau}\,U_x v + \nabla\xi(x) c \right\} \,,
      \end{split}
      \label{psi-map-mala-on-levelset-new}
    \end{align}
where $c$ in \eqref{psi-map-mala-on-levelset-new} plays the role of Lagrange
multipliers\,\footnote{More precisely, $c$ is a Lagrange multiplier when $y = x - \tau\nabla
  \overline{V}(x) + \sqrt{2\tau}\,U_x v + \nabla\xi(y) c$, which is 
  the Euler-Lagrange equation of the constrained problem $\min_{y}\{|y-
  (x - \tau\nabla \overline{V}(x) + \sqrt{2\tau}\,U_x v)|^2 \}$ subject to
  $\xi(y)=0$. The solution to this minimization problem differs from the expression in~\eqref{psi-map-mala-on-levelset-new} in the last term since~$\nabla \xi(x)c$ has to be replaced with~$\nabla \xi(y)c$. 
  We refer to Proposition~3.1 of~\cite{projection_diffusion} and Chapter~3.2 of~\cite{lelievre2010free}
  for related discussions. 
  Note that it is actually important to use~$\nabla\xi(x)c$ instead of
  $\nabla\xi(y)c$ to project states back to $\Sigma$, in order to obtain a numerical
  scheme which enjoys nice properties such as reversibility and symplecticity. Despite this difference, we will call $c$ a Lagrange multiplier (function) throughout this paper.} associated with the constraint
$\xi(F_x(v,c))=0$. 
  It is crucial to note that, given $y\in \Sigma$, the vector $v$ such that
$y=F_x(v,c)$ for some $c\in\mathbb{R}^k$ (i.e.\ $y \in \mathcal{F}_x(v)$) is uniquely determined (using the facts that $U^T_x\nabla\xi(x)=0$ and $U_x^TU_x=I_{d-k}$) by $v=G_x(y)$, where $G_x: \Sigma\rightarrow \mathbb{R}^{d-k}$ is
\begin{align}
    G_x(y) = \frac{1}{\sqrt{2\tau}}U_x^T\left(y-x+\tau\nabla \overline{V}(x)\right)\,.
  \label{v-by-y}
  \end{align}
 On the contrary, when $G_x(y)=v$ for some $y\in \Sigma$, it holds $y=F_x(v,c)$ with
 $$c=(\nabla\xi(x)^T\nabla\xi(x))^{-1}\nabla\xi(x)^T(y-x+\tau\nabla\overline{V}(x))$$
 and, therefore, $y\in \mathcal{F}_x(v)$. 
 To summarize,  
 \begin{equation}
   \forall\,(x,y)\in \Sigma\times \Sigma, \quad \forall\,v\in \mathbb{R}^{d-k}, \qquad
   y \in \mathcal{F}_x(v)~\Longleftrightarrow~ v = G_x(y)\,.
   \label{relation-between-y-and-v}
 \end{equation}
  \paragraph{Theoretical results on the projections.}
  Before introducing the set-valued proposal map which will be used in the algorithms, let us state the following results on the differentiability of the Lagrange multiplier functions, which show that there are various branches for the solutions to the constraint equation, and which motivate Assumption~\ref{assump-psi-on-state-space} below on the set-valued proposal map. The first result (Proposition~\ref{prop-differentiability-mala}) focuses on the properties of a single branch, while the second one states properties of the ensemble of solutions. Not surprisingly, the conditions in Proposition~\ref{prop-set-f-mala} are more restrictive than the ones in Proposition~\ref{prop-differentiability-mala}, see the discussion in Remark~\ref{rmk:non-tangential} below. 

\begin{proposition}
  For any $x\in \Sigma$, define the set
    \begin{align}
    C_x = \left\{y\in\Sigma\,\middle|\, \det\left(\nabla\xi(y)^T \nabla\xi(x)\right)=0\right\}\,.
    \label{c-x-in-prop1}
  \end{align}
  For all $y \in \Sigma\setminus C_x$, denoting by $v=G_x(y)\in \mathbb{R}^{d-k}$
  (so that $y \in \mathcal{F}_x(v)$), there exists a neighborhood
  $\mathcal{Q}\subset \Sigma$ of $y$, such that the following properties are satisfied:
  \begin{enumerate}
    \item
The map $G_x|_{\mathcal{Q}}: \mathcal{Q}\rightarrow G_x(\mathcal{Q}) \subset
      \mathbb{R}^{d-k}$ is a $C^1$-diffeomorphism. Moreover, $y=(G_x|_{\mathcal{Q}})^{-1}(v)$ and $(G_x|_{\mathcal{Q}})^{-1}(\bar{v})\in \mathcal{F}_x(\bar{v})$ for all $\bar{v} \in G_x(\mathcal{Q})$;
\item
      The Lagrange multiplier function $c: G_x(\mathcal{Q})\rightarrow \mathbb{R}^k$, defined by 
       \begin{align}
	 \bar{v}\in G_x(\mathcal{Q}), \qquad c(\bar{v}) =
	 \left(\nabla\xi(x)^T \nabla\xi(x)\right)^{-1}\nabla\xi(x)^T \left[(G_x|_{\mathcal{Q}})^{-1}(\bar{v})
	 - x + \tau \nabla \overline{V}(x)\right],
	 \label{multiplier-c-explicit}
       \end{align}
is $C^1$-differentiable and satisfies $y = F_x\left(v,c(v)\right)$ and
      $F_x\left(\bar{v},c(\bar{v})\right) \in \mathcal{Q} \cap \mathcal{F}_x(\bar{v})$, for all $\bar{v}\in G_x(\mathcal{Q})$.
      Furthermore, any $C^1$-differentiable function
      $\widetilde{c}:\mathcal{O}\rightarrow \mathbb{R}^k$, where $\mathcal{O}$
      is a neighborhood of $v$, such that $y=F_x(v,\widetilde{c}(v))$ and $F_x\left(\bar{v},\widetilde{c}(\bar{v})\right) \in \mathcal{Q}\cap \mathcal{F}_x(\bar{v})$ for all $\bar{v} \in \mathcal{O}$, coincides with the function $c(\cdot)$ in~\eqref{multiplier-c-explicit} on $\mathcal{O}\cap G_x(\mathcal{Q})$.
  \end{enumerate}
  \label{prop-differentiability-mala}
\end{proposition}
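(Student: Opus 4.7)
The plan is to recognize the proposition as a local inverse-function-theorem statement for $G_x$ at $y$, together with a bookkeeping argument for the associated Lagrange multiplier function. The set $C_x$ is exactly the locus where the derivative of $G_x$ restricted to $\Sigma$ degenerates, so the whole result rests on checking this equivalence.

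First I would compute $DG_x(y): T_y\Sigma \to \mathbb{R}^{d-k}$. Since $G_x$ is the restriction to $\Sigma$ of the affine map $y\mapsto \frac{1}{\sqrt{2\tau}}U_x^T(y-x+\tau\nabla\overline{V}(x))$, one has $DG_x(y)\cdot w = \frac{1}{\sqrt{2\tau}}U_x^T w$ for every $w\in T_y\Sigma$. Both $T_y\Sigma$ and $\mathbb{R}^{d-k}$ have dimension~$d-k$, so invertibility of $DG_x(y)$ is equivalent to injectivity. Suppose $U_x^T w=0$ with $w\in T_y\Sigma$: then $w\in (T_x\Sigma)^\perp = \mathrm{Range}(\nabla\xi(x))$, so $w=\nabla\xi(x)\alpha$ for some $\alpha\in\mathbb{R}^k$; the tangency condition $\nabla\xi(y)^T w=0$ becomes $\nabla\xi(y)^T\nabla\xi(x)\alpha=0$. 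The hypothesis $y\notin C_x$ forces $\alpha=0$, hence $w=0$. This is the key (and really the only) non-routine step.

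Given invertibility of $DG_x(y)$, the inverse function theorem on manifolds yields a neighborhood $\mathcal{Q}\subset \Sigma$ of $y$ on which $G_x|_\mathcal{Q}:\mathcal{Q}\to G_x(\mathcal{Q})$ is a $C^1$-diffeomorphism onto an open subset of $\mathbb{R}^{d-k}$. The assertion $(G_x|_\mathcal{Q})^{-1}(\bar v)\in \mathcal{F}_x(\bar v)$ is then immediate from the equivalence~\eqref{relation-between-y-and-v}, and $y=(G_x|_\mathcal{Q})^{-1}(v)$ since $v=G_x(y)$. This proves item~1.

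For item~2, the formula~\eqref{multiplier-c-explicit} is obtained by solving for $c$ in $\bar y = x - \tau\nabla\overline{V}(x) + \sqrt{2\tau}\,U_x\bar v + \nabla\xi(x)c$ with $\bar y =(G_x|_\mathcal{Q})^{-1}(\bar v)$; multiplying on the left by $\nabla\xi(x)^T$, using $U_x^T\nabla\xi(x)=0$ and $\nabla\xi(x)^T\nabla\xi(x)$ invertible (Assumption~\ref{assump-xi}), gives the stated expression. Its $C^1$ regularity then follows from that of $(G_x|_\mathcal{Q})^{-1}$, and by construction $F_x(\bar v,c(\bar v))=(G_x|_\mathcal{Q})^{-1}(\bar v)\in\mathcal{Q}\cap\mathcal{F}_x(\bar v)$. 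For the uniqueness clause, suppose $\widetilde c$ is any $C^1$ function on some neighborhood $\mathcal{O}$ of $v$ with $F_x(\bar v,\widetilde c(\bar v))\in\mathcal{Q}\cap\mathcal{F}_x(\bar v)$ for $\bar v\in\mathcal{O}$. By~\eqref{relation-between-y-and-v}, $G_x(F_x(\bar v,\widetilde c(\bar v)))=\bar v$; since $F_x(\bar v,\widetilde c(\bar v))\in\mathcal{Q}$ and $G_x|_\mathcal{Q}$ is a bijection onto its image, $F_x(\bar v,\widetilde c(\bar v))=(G_x|_\mathcal{Q})^{-1}(\bar v)=F_x(\bar v,c(\bar v))$ on $\mathcal{O}\cap G_x(\mathcal{Q})$. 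Subtracting the two expressions for $F_x$ gives $\nabla\xi(x)(\widetilde c(\bar v)-c(\bar v))=0$, and the full rank of $\nabla\xi(x)$ yields $\widetilde c=c$ on $\mathcal{O}\cap G_x(\mathcal{Q})$. The main obstacle is really the non-degeneracy argument at the beginning; everything else is essentially bookkeeping using $U_x^T\nabla\xi(x)=0$ and the full rank of $\nabla\xi(x)$.
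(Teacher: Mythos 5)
Your proposal is correct and follows essentially the same route as the paper: both identify $C_x$ as the degeneracy locus of $DG_x$ (your direct injectivity argument is the contrapositive of one direction of the paper's equivalence $\det(\nabla\xi(y)^T\nabla\xi(x))=0\Leftrightarrow\det(U_x^TU_y)=0$), apply the inverse function theorem for item~1, and establish item~2 by solving for $c$ with $\nabla\xi(x)^T$ and using the injectivity of $G_x|_{\mathcal{Q}}$ together with the full rank of $\nabla\xi(x)$ for uniqueness.
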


Concerning the set $\mathcal{F}_x(v)$ in \eqref{psi-map-mala-on-levelset-new}, we have the following result.
\begin{proposition}
  For any $x\in \Sigma$, define the subset of $\mathbb{R}^{d-k}$ 
  \begin{equation} 
 \mathcal{N}_x = G_x(C_x) =  
    \left\{G_x(y)\,\middle|\, y\in\Sigma, \ \det\left(\nabla\xi(y)^T \nabla\xi(x)\right)=0\right\}\,,
    \label{set-nx}
  \end{equation} 
    where $C_x$ is defined in~\eqref{c-x-in-prop1}.
  Then the following properties are satisfied:
  \begin{enumerate}
    \item
      $\mathcal{N}_x$ is a closed set of zero Lebesgue measure in $\mathbb{R}^{d-k}$;
    \item
      For all $v \in \mathbb{R}^{d-k}\setminus \mathcal{N}_x$, the set $\mathcal{F}_x(v)$ is finite (possibly empty); 
    \item
      Consider $v \in \mathbb{R}^{d-k}\setminus \mathcal{N}_x$ such that
      $n=|\mathcal{F}_x(v)| \geq 1$, and denote by $\mathcal{F}_x(v) =
      \left\{y^{(1)}, y^{(2)}, \ldots, y^{(n)}\right\}$. Then there exists a
      neighborhood $\mathcal{O}$ of $v$ and $n$ different $C^1$-differentiable
      Lagrange multiplier functions $c^{(i)}: \mathcal{O}\rightarrow
      \mathbb{R}^k$, which are locally given by \eqref{multiplier-c-explicit} by applying Proposition~\ref{prop-differentiability-mala}
      with $y=y^{(i)}$, such that $F_x\left(v,c^{(i)}(v)\right)=y^{(i)}$ and 
      \begin{align}
	\forall\bar{v}\in \mathcal{O}, \qquad \mathcal{F}_x(\bar{v}) = \left\{
	  F_x\left(\bar{v}, c^{(1)}(\bar{v})\right),~
	F_x\left(\bar{v}, c^{(2)}(\bar{v})\right),\ldots, F_x\left(\bar{v}, c^{(n)}(\bar{v})\right)\right\}\,.
      \end{align}
            In particular, $|\mathcal{F}_x(\bar{v})|=n$ for all $\bar{v}\in \mathcal{O}$.
      \item
        $\mathbb{R}^{d-k}$ is the disjoint union of the subsets
	$\mathcal{N}_x$, $\mathcal{B}_{x,0}$,
      $\mathcal{B}_{x,1}, \ldots$, i.e.\ $\dps \mathbb{R}^{d-k}=\left(\bigcup_{i=0}^{+\infty} \mathcal{B}_{x,i}\right)\cup
      \mathcal{N}_x$, where the sets
      \begin{equation}
	\begin{aligned}
	  \mathcal{B}_{x,0} =& \left\{v\in \mathbb{R}^{d-k} \, \middle| \, |\mathcal{F}_x(v)| = 0\right\}\,,\\
	  \mathcal{B}_{x,i} =& \left\{v\in \mathbb{R}^{d-k}\setminus
	  \mathcal{N}_x\, \middle| \, |\mathcal{F}_x(v)| = i\right\}\,, \quad i=1,2,\ldots\,,
	\end{aligned}
      \end{equation}
      are open subsets of $\mathbb{R}^{d-k}$.   
  \end{enumerate}
  \label{prop-set-f-mala}
\end{proposition}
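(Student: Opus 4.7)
The plan is to reduce all four items to standard statements about the $C^1$-map $G_x: \Sigma \to \mathbb{R}^{d-k}$, which goes between manifolds of equal dimension $d-k$. Two preliminary identifications make this reduction essentially mechanical. First, \eqref{relation-between-y-and-v} gives $\mathcal{F}_x(v) = G_x^{-1}(v)$. Second, $C_x$ coincides with the critical set of $G_x$: since $G_x$ is affine, its differential at $y\in\Sigma$ is the restriction to $T_y\Sigma$ of the linear map $w \mapsto (2\tau)^{-1/2} U_x^T w$, which fails to be surjective iff there is a nonzero $w \in T_y\Sigma \cap \ker(U_x^T)$. Because $\ker(U_x^T) = (T_x\Sigma)^\perp = \mathrm{range}(\nabla\xi(x))$, such a $w$ exists iff $\nabla\xi(y)^T\nabla\xi(x)\,c=0$ for some nonzero $c$, that is iff $y\in C_x$. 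With this dictionary in hand, Sard's theorem, the regular value theorem, and compactness of $\Sigma$ do the rest.

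For Item~1, $C_x$ is closed in $\Sigma$ by continuity of the determinant, hence compact by Assumption~\ref{assump-xi}, so $\mathcal{N}_x = G_x(C_x)$ is compact and in particular closed in $\mathbb{R}^{d-k}$; Sard's theorem applied to $G_x$ (in the $m=m_1=d-k$ case noted in the introduction) then gives that $\mathcal{N}_x$ has zero Lebesgue measure. For Item~2, any $v \notin \mathcal{N}_x$ is automatically a regular value, since $y \in G_x^{-1}(v) \cap C_x$ would force $v = G_x(y) \in \mathcal{N}_x$. The regular value theorem then yields that $\mathcal{F}_x(v) = G_x^{-1}(v)$ is a $0$-dimensional embedded submanifold of the compact manifold $\Sigma$, hence finite.

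For Item~3, write $\mathcal{F}_x(v) = \{y^{(1)},\dots,y^{(n)}\}$ with each $y^{(i)} \in \Sigma\setminus C_x$. Proposition~\ref{prop-differentiability-mala} supplies a neighborhood $\mathcal{Q}_i$ of each $y^{(i)}$ on which $G_x$ is a $C^1$-diffeomorphism and a $C^1$ multiplier $c^{(i)}$ on $G_x(\mathcal{Q}_i)$; we may shrink the $\mathcal{Q}_i$ to make them pairwise disjoint. The step I expect to be the most delicate is to choose a common neighborhood $\mathcal{O}$ of $v$ inside $\bigcap_i G_x(\mathcal{Q}_i) \setminus \mathcal{N}_x$ small enough that $G_x^{-1}(\bar v) \subset \bigcup_i \mathcal{Q}_i$ for every $\bar v \in \mathcal{O}$, so that no ``extra'' preimages appear. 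I plan to establish this by contradiction using the compactness of $\Sigma$: a sequence $\bar v_j \to v$ admitting preimages $y_j \in G_x^{-1}(\bar v_j) \setminus \bigcup_i \mathcal{Q}_i$ would, by compactness of the closed set $\Sigma \setminus \bigcup_i \mathcal{Q}_i$, have a subsequence converging to some $y^\star$ with $G_x(y^\star)=v$ and $y^\star \notin \bigcup_i \mathcal{Q}_i$, contradicting $\mathcal{F}_x(v) \subset \bigcup_i \mathcal{Q}_i$. Once this is known, the diffeomorphism property on each $\mathcal{Q}_i$ gives exactly one preimage of $\bar v$ in each $\mathcal{Q}_i$, namely $F_x(\bar v, c^{(i)}(\bar v))$, so $|\mathcal{F}_x(\bar v)|=n$ on $\mathcal{O}$.

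Item~4 is then a quick consequence. The set $\mathcal{B}_{x,0}$ is the complement of the compact set $G_x(\Sigma)$, hence open; each $\mathcal{B}_{x,i}$ with $i\ge 1$ is open by Item~3 combined with the closedness of $\mathcal{N}_x$ from Item~1 (which allows $\mathcal{O}$ to be taken inside $\mathbb{R}^{d-k}\setminus \mathcal{N}_x$). For disjointness, $\mathcal{B}_{x,i}\cap \mathcal{N}_x = \emptyset$ for $i\ge 1$ by definition, while any $v\in \mathcal{N}_x$ has $|\mathcal{F}_x(v)|\ge 1$ (pick $y\in C_x$ with $G_x(y)=v$), so $\mathcal{N}_x \cap \mathcal{B}_{x,0}=\emptyset$; the covering property is Item~2.
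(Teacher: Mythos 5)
Your proof is correct, and its overall skeleton matches the paper's: you identify $C_x$ with the critical set of $G_x$ and $\mathcal{F}_x(v)$ with $G_x^{-1}(v)$, apply Sard's theorem for the measure-zero claim, and prove the local constancy of $|\mathcal{F}_x(\cdot)|$ in Item~3 by exactly the paper's compactness-contradiction argument (no extra preimages can accumulate outside $\bigcup_i \mathcal{Q}_i$). Where you genuinely diverge is in three localized spots, each time trading the paper's hands-on sequential argument for a standard differential-topology shortcut. For Item~1 the paper extracts a convergent subsequence of preimages to show $\mathcal{N}_x$ is closed, whereas you simply observe that $\mathcal{N}_x=G_x(C_x)$ is the continuous image of a compact set. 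For Item~2 the paper argues by contradiction that infinitely many Lagrange multipliers would form a Cauchy sequence and then derives a contradiction with the local diffeomorphism property; you instead note that $v\notin\mathcal{N}_x$ is a regular value, so $G_x^{-1}(v)$ is a closed zero-dimensional submanifold of the compact $\Sigma$, hence finite. For the openness of $\mathcal{B}_{x,0}$ in Item~4 the paper again shows sequential closedness of the complement, while you identify $\mathcal{B}_{x,0}=\mathbb{R}^{d-k}\setminus G_x(\Sigma)$ as the complement of a compact set. Your shortcuts are shorter and lean on theorems the paper has already set up in its notation section; the paper's versions are more self-contained and, in the case of Item~2, make explicit the quantitative control $c_i-c_j=(\nabla\xi(x)^T\nabla\xi(x))^{-1}\nabla\xi(x)^T(y^{(i)}-y^{(j)})$ on the multipliers, which is reused in spirit elsewhere. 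Both routes are valid; nothing is missing from yours.
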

The proofs of Propositions~\ref{prop-differentiability-mala}
and~\ref{prop-set-f-mala} are given in Section~\ref{sec-proofs-MALA}. We point out that the condition $\det\left(\nabla\xi(y)^T \nabla\xi(x)\right)=0$ in~\eqref{c-x-in-prop1} is the non-tangential condition considered in Definition~2.1 of \cite{hmc-submanifold-tony}.

\begin{remark}
  Note that, on the one hand, the condition $v\in \mathbb{R}^{d-k}\setminus \mathcal{N}_x$ in
  Proposition~\ref{prop-set-f-mala} implies that~$y^{(i)}\in \Sigma\setminus C_x$. On the other hand, in contrast to Proposition~\ref{prop-set-f-mala},
  Proposition~\ref{prop-differentiability-mala} holds for $y\in \Sigma\setminus C_x$, even if $v=G_x(y)\in \mathcal{N}_x$. The difference comes from the fact that there may be multiple solutions for a given~$v$,
  some of which satisfy the non-tangential condition while  others do not (see
  for instance Figure~\ref{illustrative-figures}, middle picture: the upper
  point $y_1$ and the lower point $y_2$ satisfy the non-tangential condition,
  but the points on the vertical segment, \textit{e.g.}, the point $y_3$, do not). In fact, as shown in Figure~\ref{illustrative-figures}, the set $\mathcal{F}_x(v)$ may become quite complicated when $v\in \mathcal{N}_x$. In practice, in the algorithm we will present below, the probability to draw a velocity in $\mathcal{N}_x$ is zero.
 \label{rmk:non-tangential}
 \end{remark}

\begin{figure}[htp]
  \includegraphics[width=15cm]{./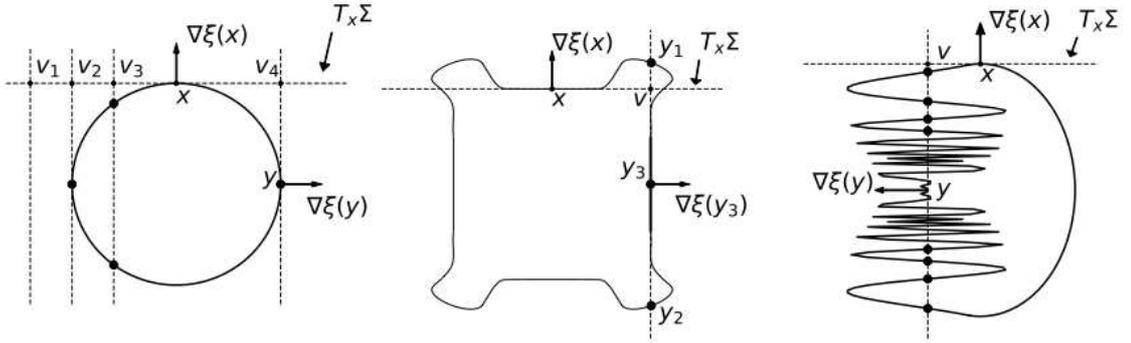}
  \caption{Illustrative examples of the sets $\mathcal{F}_x(v)$
  in~\eqref{psi-map-mala-on-levelset-new} and $\mathcal{N}_x$ in
  Proposition~\ref{prop-set-f-mala} for three different submanifolds. In each case, 
  we choose $\overline{V}\equiv 0$ and $\tau=1/2$. 
 The dashed horizontal lines show the tangent space at a given state $x$, while
  the dashed vertical lines show the direction used to find intersection
  points in \eqref{psi-map-mala-on-levelset-new}.
  In all three cases, $U_x=(1,0)^T$. Left: $\xi(x)=\frac{1}{2}(x^2_1 + x^2_2 -1): \mathbb{R}^2\rightarrow \mathbb{R}$.  At $x=(0,1)^T$, the set~$\mathcal{F}_x(v)$ has no element for $v=v_1$, $1$
  element for $v=v_2, v_4$, and $2$ elements for $v=v_3$, respectively. The vector~$v_4$ is in
  $\mathcal{N}_x$ and $\nabla\xi(y)^T\nabla\xi(x)=0$, where $y=(1,0)^T\in
  \mathcal{F}_x(v_4)$. A similar result is true for $v_2$. See Example~\ref{ex:sphere} for further comments on this case.
  Middle: $v\in \mathcal{N}_x$ since $v=G_x(y_3)$, where $y_3 \in \mathcal{F}_x(v)$ satisfies
  $\nabla\xi(y_3)^T\nabla\xi(x)=0$. The set $\mathcal{F}_x(v)$ contains a
  segment including $y_3$, i.e.\ it has infinitely many (uncountable) elements. Right: 
  $v\in \mathcal{N}_x$ since $v=G_x(y)$ where $y \in \mathcal{F}_x(v)$
  satisfies $\nabla\xi(y)^T\nabla\xi(x)=0$.
  The set $\mathcal{F}_x(v)$ contains infinitely many (countable) elements; see the intersection points of the dashed vertical line and the
  submanifold, illustrated using black dots.
  The submanifold in the right plot is smooth and can be constructed using
  the smooth curve defined as $x_1=\mathrm{e}^{-1/x_2^2}\sin(1/x_2)$ when $x_2\neq 0$, and $x_1=0$ when $x_2=0$. 
    \label{illustrative-figures}}
\end{figure}

  \paragraph{The set-valued proposal map.}
The set-valued proposal map, denoted by $\Psi_x(v)$, takes values in a subset of $\mathcal{F}_x(v)$ and encodes the projections obtained numerically by solving the (nonlinear)
constraint equation (see Section~\ref{sec:numerical_computation_constraint} for concrete examples):
\begin{align}
  \textrm{Find } c \in \mathbb{R}^k \textrm{ such that } \xi(F_x(v,c))=\xi\left(x - \tau\nabla \overline{V}(x) + \sqrt{2\tau}U_x v + \nabla\xi(x) c\right) = 0\,.
      \label{psi-by-nonlinear-constraint-eqn}
\end{align}
The set $\Psi_x(v)$ is empty if no solutions
to~\eqref{psi-by-nonlinear-constraint-eqn} are found, and contains more than
one element when multiple solutions can be found. We define the set 
\begin{equation}
  \mbox{Im}\Psi_x = \left\{y\in \Sigma\,\middle|\, \exists v\in \mathbb{R}^{d-k},\, y\in \Psi_x(v)\right\}\,, \quad x \in \Sigma\,.
  \label{def-im-of-psi-x}
\end{equation}
Note that \eqref{v-by-y} implies that
\begin{align}
  \forall\,x\in \Sigma, \quad \forall~v, \bar{v}\in \mathbb{R}^{d-k}, \qquad v
  \neq \bar{v}~ \Longrightarrow~ \Psi_x(v) \cap \Psi_x(\bar{v}) =\emptyset\,.
  \label{psi-given-y-v-is-unique}
\end{align}

Motivated by Propositions~\ref{prop-differentiability-mala} and~\ref{prop-set-f-mala}, we make the following assumption on the map $\Psi_x$.

\begin{assumption}
  \label{assump-psi-on-state-space}
 The following properties are satisfied:
  \begin{enumerate}
    \item For any~$x \in \Sigma$ and for all $v \in \mathbb{R}^{d-k}$, $\Psi_x(v)$ contains at most a finite number of elements;
    \item
The set
\begin{align}
  \mathcal{D} = \left\{(x,y)\in \Sigma\times \Sigma ~\middle|~ y \in
  \mbox{\textup{Im}}\Psi_x,~x \in \mbox{\textup{Im}}\Psi_{y} \right\}
  \label{admissible-set-two-steps}
\end{align}
      is a non-empty measurable subset of $\Sigma\times \Sigma$;
    \item
For any~$x \in \Sigma$ and for all $v \in \mathbb{R}^{d-k}$ such that $n=|\Psi_x(v)|\ge 1$,
      denote by $\Psi_x(v)=\{y_1, \ldots, y_n\}$ where (upon reordering)
      $y_1, \ldots, y_{m} \in \Sigma\setminus C_x$ for some $m \le n$.
      When $m \ge 1$, there exists a neighborhood~$\mathcal{O}$ of $v$
      and $m$ different $C^1$-diffeomorphisms $\Psi_x^{(j)}: \mathcal{O}\rightarrow \Psi_x^{(j)}(\mathcal{O}) \subset\Sigma$ (for $1 \le j \le m$) such that 
      $y_j = \Psi_x^{(j)}(v)$ and $\Psi_x^{(j)}(\bar{v}) \in \Psi_x(\bar{v})$ for all $\bar{v}\in \mathcal{O}$. 
  \end{enumerate}
\end{assumption}
We refer to Section~\ref{sec:numerical_computation_constraint} for a discussion on set-valued maps satisfying these assumptions.

\begin{example}
  \label{ex:sphere}
Let us give a simple concrete example of a set-valued map $\Psi_x$. Consider $\xi(x)=(x^2_1 + x^2_2 -1)/2$ for $x=(x_1,x_2)^T\in \mathbb{R}^2$.
  The level set $\Sigma = \{x\in \mathbb{R}^2\,|\,\xi(x)=0\}$ is the unit
  circle (see Figure~\ref{illustrative-figures}, left). Fixing without loss of generality $x=(0,1)^T$, it
  holds $\nabla\xi(x) = (0,1)^T$ so that we can choose $U_x=(1,0)^T$. We set
  $\overline{V}(x)\equiv 0$ and $\tau=1/2$. In this case, the constraint
  equation~\eqref{psi-by-nonlinear-constraint-eqn} has $0$, $1$, or $2$
  solutions, when $|v|>1$, $|v|=1$, or $|v|<1$, respectively. The sets in Proposition~\ref{prop-differentiability-mala} are
  then $C_x = \{(-1, 0)^T, (1,0)^T\}$ and $\mathcal{N}_x=\{-1,1\}$. In this
  case, one can define the map $\Psi_x(v) = \big\{(v, \sqrt{1-v^2}\,)^T, (v,
  -\sqrt{1-v^2}\,)^T\big\}$, for $|v|<1$, $\Psi_x(v) = \{(v,0)\}$ for~$|v|=1$,
  and $\Psi_x(v) = \emptyset$, for $|v|>1$.
  For $|v|<1$, the maps $\Psi^{(j)}_x$ can be taken as 
  $\Psi^{(1)}_x(\bar{v})=(\bar{v}, \sqrt{1-\bar{v}^2}\,)^T$,
  $\Psi^{(2)}_x(\bar{v})=(\bar{v}, -\sqrt{1-\bar{v}^2}\,)^T$. They are clearly $C^1$-differentiable in~$\{v, |v|<1\}$.
  It is easy to check that $\Psi_x$ satisfies Assumption~\ref{assump-psi-on-state-space}.
\end{example}

\subsubsection{Presentation of the algorithm}
\label{sec:presentation_MALA}

We are now in position to introduce the multiple projection MALA algorithm, see Algorithm~\ref{algo-mala-on-sigma}. 
\begin{algorithm}[htbp]
  \caption{Multiple projection MALA on $\Sigma$}
  \label{algo-mala-on-sigma}
  \begin{algorithmic}[1]
    \State Choose $x^{(0)} \in \Sigma$ and $N > 1$. Set $i=0$.
    \While{$i<N$}
    \State
    Set $x=x^{(i)}$.
    \State
    Randomly draw $v \in \mathbb{R}^{d-k}$ according to $v
    \sim (2\pi\beta^{-1})^{-\frac{d-k}{2}} \mathrm{e}^{-\frac{\beta|v|^2}{2}}\,dv$.
    \State 
    Compute the set $\Psi_{x}(v)$ by solving \eqref{psi-by-nonlinear-constraint-eqn}. 
   If $\Psi_x(v)=\emptyset$, set $x^{(i+1)}=x$ and \Goto{algo-mala-end-of-while}.
  \State
    Randomly draw a proposal state $y\in\Psi_x(v)$ with probability $\omega(y\,|\,x,v)$, and set $\widetilde{x}^{(i+1)}=y$. 
    \State
    Compute $v'$ as given by~\eqref{v-prime-inverse}, and the set $\Psi_{y}(v')$. 
   \State 
    Check whether $x \in \Psi_y(v')$. If this is not true,
    set $x^{(i+1)}=x$ and \Goto{algo-mala-end-of-while}.
    \State
    Draw a random number $r$ uniformly distributed in~$[0,1]$. Set 
  \begin{align*}
    x^{(i+1)} = 
    \begin{cases}
      \widetilde{x}^{(i+1)}, &\quad \mbox{if}~ r \le a(\widetilde{x}^{(i+1)}\,|\,x^{(i)})\\
      x^{(i)}\,, &\quad \mbox{otherwise}\,,
    \end{cases}
  \end{align*}
    with the acceptance probability defined in~\eqref{rate-for-mala-on-sigma}.
  \State $i \gets i+1$.  \label{algo-mala-end-of-while}
  \EndWhile
  \end{algorithmic}
\end{algorithm}

Let us discuss the different steps of one iteration of the algorithm: (i) the choice of a proposed move~$\widetilde{x}^{(i+1)}$ starting from a current state~$x=x^{(i)}$; (ii) the verification of the reversibility condition; (iii)~the acceptance/rejection of the proposal through a Metropolis-Hastings procedure. 

\paragraph{Choice of proposal.}
The proposal state $\widetilde{x}^{(i+1)}$ is constructed as follows:
\begin{enumerate}
\item
  Draw $v\in \mathbb{R}^{d-k}$ according to the $(d-k)$-dimensional Gaussian measure with density
  \begin{align}
    \gamma(dv) = \left(2\pi\beta^{-1}\right)^{-\frac{d-k}{2}} \mathrm{e}^{-\frac{\beta|v|^2}{2}}\,dv, \qquad v\in \mathbb{R}^{d-k}\,.
    \label{rescaled-gaussian}
  \end{align}
\item
  Set $\widetilde{x}^{(i+1)}=x$ if $\Psi_x(v)=\emptyset$; otherwise, 
    randomly choose an element $y\in \Psi_x(v)$ with probability
    $\omega(y\,|\,x,v)$, and set $\widetilde{x}^{(i+1)}=y$.
\end{enumerate}
 The probabilities $(\omega(y\,|\,x,v))_{y\in \Psi_x(v)}$ are chosen in
 $(0,1]$, such that the function $\omega(\cdot\,|\,\cdot,\cdot)$ is measurable and $\sum_{y\in \Psi_x(v)} \omega(y\,|\,x,v)=1$.
For instance, $\omega(\cdot\,|\,x,v)$ can be chosen as a uniform law,
i.e.\ $\omega(y\,|\,x,v) = \frac{1}{|\Psi_x(v)|}$ for all $y\in
\Psi_x(v)$. Alternatively, $\omega(\cdot\,|\,x,v)$ can be chosen so that it is more likely to jump to states that are close
to (respectively far from) the current state~$x=x^{(i)}$, i.e.\
$\omega(y\,|\,x,v) \ge \omega(y'\,|\,x,v)$ whenever $|y-x|\le |y'-x|$ (respectively
$|y-x|\ge |y'-x|$). Note also that the set $\mathcal{N}_x$ in Proposition~\ref{prop-set-f-mala}
has zero measure under the Gaussian measure in~\eqref{rescaled-gaussian}, since it has zero Lebesgue measure. Therefore, the probability to draw a velocity in $\mathcal{N}_x$ is zero.

\paragraph{Reversibility check.} 
Once the proposal states have been computed using the map~$\Psi_x$, a reversibility
check is in general needed in order to guarantee that the algorithm generates a
reversible Markov chain and therefore samples the target distribution $\nu_{\Sigma}$ without
bias~\cite{goodman-submanifold}. Specifically, after randomly choosing $y\in\Psi_x(v)$, one verifies that $x \in \Psi_{y}(v')$ for some $v'\in \mathbb{R}^{d-k}$. Note that such an element~$v'$ is uniquely given by
    \begin{align}
      v'= G_y(x) = \frac{1}{\sqrt{2\tau}} U_{y}^T\left(x-y+ \tau\nabla \overline{V}(y)\right)\,. 
      \label{v-prime-inverse}
    \end{align} 
    Therefore, one only needs in practice to check that, after numerically solving for the solutions $\bar{c}$ of the equation
     \begin{align}
       x' = F_y(v',\bar{c}), \qquad \xi\left( x'\right)=0\,,
      \label{psi-map-mala-on-levelset-inverse-check}
    \end{align}
    one of the solutions $\bar{c}$ is such that $x'=x$. 
    
    Note that, with $v'$ given in~\eqref{v-prime-inverse}, we actually have $x'=x$ in~\eqref{psi-map-mala-on-levelset-inverse-check} for the choice
  \begin{align}
    \bar{c} = \left[\nabla\xi(y)^T\nabla\xi(y)\right]^{-1}\nabla\xi^{T}(y)
    \left(x-y+\tau \nabla \overline{V}(y)\right)\,.
    \label{target-lagrange-multiplier-cv-prime}
  \end{align}
In other words, it is guaranteed that $x\in \mathcal{F}_y(v')$.
    The reversibility check thus amounts to verifying that the Lagrange
    multiplier in~\eqref{target-lagrange-multiplier-cv-prime} is indeed among the
    possibly many solutions $\Psi_y(v')$ found by the numerical method used to
    solve~\eqref{psi-map-mala-on-levelset-inverse-check} in practice.
    If this is indeed the case, since $\omega(\cdot\,|\,y,v')$ is assumed to be positive on $\Psi_y(v')$, there is a positive probability to go back to the initial state~$x^{(i)}$ when starting from the proposed state~$\widetilde{x}^{(i+1)}$. A consequence of this discussion is the following important remark.

\begin{remark}
In the case where the numerical solver is able to find all the solutions to
  \eqref{psi-by-nonlinear-constraint-eqn}, Step~$8$ in
  Algorithm~\ref{algo-mala-on-sigma} (the ``reversibility check") is actually not needed since, by construction, $x\in \Psi_y(v')$. 
  \label{rmk-Reversibility-check-mala}
\end{remark}

\paragraph{Metropolis procedure.}
When the reversibility check is successful, i.e.\ $x\in\Psi_{y}(v')$, a
Metropolis-Hastings step is performed to either accept or reject the proposed
move to the state $y\in\Psi_x(v)$, based on the acceptance
probability~$a(\widetilde{x}^{(i)}\,|\,x^{(i)})$, defined by
\begin{align}
  \begin{split}
    a(y\,|\,x) =&
    \min\left\{1,~\frac{\omega(x\,|\,y,v')}{\omega(y\,|\,x,v)}\,\mathrm{e}^{-\beta\left[\left(V(y)+\frac{1}{2}|v'|^2\right) - \left(V(x) + \frac{1}{2}|v|^2\right)\right]}\right\}\,, 
  \end{split}
\label{rate-for-mala-on-sigma}
\end{align}
where, we recall, $v=G_x(y)$ and $v'=G_y(x)$. In particular, when 
the state $y$ is drawn with a uniform probability in $\Psi_x(v)$ (i.e.\
 for all $x\in\Sigma$ and $v\in \mathbb{R}^{d-k}$, $\omega(y\,|\,x,v)=\frac{1}{|\Psi_x(v)|}$ for all $y \in \Psi_x(v)$), the acceptance rate~\eqref{rate-for-mala-on-sigma} becomes 
\begin{align*}
  a(y\,|\,x) =& \min\left\{1,~\frac{|\Psi_x(v)|}{|\Psi_y(v')|}\,\mathrm{e}^{-\beta\left[\left(V(y)+\frac{1}{2}|v'|^2\right) - \left(V(x) + \frac{1}{2}|v|^2\right)\right]}\right\}\,. 
\end{align*}

\subsubsection{Consistency of the algorithm}
\label{sec:consistency_MALA}
Let us recall the following definition of the reversibility of Markov chains.
\begin{definition}
A Markov chain on $\Sigma$ with transition probability kernel $q(x,dy)$ is
  reversible with respect to the measure $\nu_{\Sigma}(dx)$ if the following equality
  holds: For any bounded measurable function~$f: \Sigma\times \Sigma\rightarrow \mathbb{R}$,
\begin{align}
  \int_{\Sigma\times \Sigma} f(x,y)\, q(x,dy)\,\nu_{\Sigma}(dx) =
  \int_{\Sigma\times \Sigma} f(y,x)\, q(x,dy)\,\nu_{\Sigma}(dx).
  \label{usual-detailed-balance-condition-integral}
\end{align}
  \label{def-reversible}
\end{definition}
One can verify that a Markov chain $(x^{(i)})_{i\ge 0} \subset \Sigma$ is
reversible with respect to $\nu_{\Sigma}$ if and only if, for any integer $n>0$, the
law of the sequence $(x^{(0)},\, x^{(1)},\, \ldots,\, x^{(n)})$  is the same as the law of the sequence $(x^{(n)},\, x^{(n-1)},\,
\ldots,\, x^{(0)})$ when~$x^{(0)}$ is distributed according to~$\nu_{\Sigma}$. 
In particular, this implies that $\nu_{\Sigma}$ is an invariant measure of the Markov chain.

The following theorem states that Algorithm~\ref{algo-mala-on-sigma}  indeed generates a
reversible Markov chain with respect to the
measure~$\nu_{\Sigma}=Z_{\nu_{\Sigma}}^{-1}\, \mathrm{e}^{-\beta V(x)} \sigma_\Sigma(dx)$ defined in~\eqref{measure-mu-on-m}. Its proof is given in Section~\ref{sec-proofs-MALA}.

\begin{theorem}
  Under Assumptions~\ref{assump-xi} and~\ref{assump-psi-on-state-space},
  Algorithm~\ref{algo-mala-on-sigma} generates a Markov chain on $\Sigma$ which is
  reversible with respect to the probability measure $\nu_{\Sigma}$.
  In particular, this Markov chain preserves the probability measure $\nu_{\Sigma}$.
  \label{thm-mala-on-sigma}
\end{theorem}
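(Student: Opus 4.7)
I would verify the detailed balance relation~\eqref{usual-detailed-balance-condition-integral} directly. Write the transition kernel as $q(x,dy)=r(x)\,\delta_x(dy)+q_{\rm acc}(x,dy)$, where $r(x)\in[0,1]$ collects the probabilities of the three rejection scenarios (empty $\Psi_x(v)$, failed reversibility check, Metropolis rejection) and $q_{\rm acc}$ corresponds to accepted proposals. The $\delta_x$-contribution to both sides of~\eqref{usual-detailed-balance-condition-integral} reduces to $\int_{\Sigma}f(x,x)r(x)\,\nu_{\Sigma}(dx)$ and is therefore trivially symmetric, so it suffices to show that $\nu_{\Sigma}(dx)\,q_{\rm acc}(x,dy)$ is a symmetric measure on $\Sigma\times\Sigma$.

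The heart of the argument is a change of variable from the velocity $v$ to the proposed position $y$. By Proposition~\ref{prop-set-f-mala}(1), $\gamma$-almost every $v$ belongs to $\mathbb{R}^{d-k}\setminus\mathcal{N}_x$, and the identity~\eqref{relation-between-y-and-v} then guarantees $\Psi_x(v)\subset\Sigma\setminus C_x$. Using the local $C^1$-diffeomorphisms $\Psi_x^{(j)}$ of Assumption~\ref{assump-psi-on-state-space}(3), which are globally inverted by the single-valued $G_x$, I unfold $\sum_{y\in\Psi_x(v)}$ as an integral over $y\in\mathrm{Im}\Psi_x\cap(\Sigma\setminus C_x)$. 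Differentiating~\eqref{v-by-y} and expressing $T_y\Sigma$ in the orthonormal basis given by the columns of~$U_y$, the differential $DG_x(y)\colon T_y\Sigma\to\mathbb{R}^{d-k}$ is represented by the matrix $(2\tau)^{-1/2}\,U_x^TU_y$, so the change-of-variable Jacobian reads
\[J(x,y)=(2\tau)^{-\frac{d-k}{2}}\bigl|\det(U_x^TU_y)\bigr|.\]
The crucial observation is that $J(x,y)=J(y,x)$, since $\det(U_x^TU_y)=\det(U_y^TU_x)$. After this change of variable, $\nu_{\Sigma}(dx)\,q_{\rm acc}(x,dy)$ admits the density
\[\frac{1}{Z_{\nu_{\Sigma}}}\mathrm{e}^{-\beta V(x)}\gamma(G_x(y))\,\omega(y\,|\,x,G_x(y))\,a(y\,|\,x)\,J(x,y)\,\mathbf{1}_{\mathcal{D}}(x,y)\]
with respect to $\sigma_{\Sigma}(dx)\otimes\sigma_{\Sigma}(dy)$, in which $\mathbf{1}_{\mathcal{D}}$ encodes both the image condition $y\in\mathrm{Im}\Psi_x$ and the reversibility check $x\in\mathrm{Im}\Psi_y$.

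Symmetry of this joint density then follows factor by factor: $J(x,y)$ and $\mathcal{D}$ are manifestly symmetric, and combining~\eqref{rescaled-gaussian} with~\eqref{rate-for-mala-on-sigma} and setting $v=G_x(y)$, $v'=G_y(x)$ collapses the remaining terms into
\[\mathrm{e}^{-\beta V(x)}\gamma(v)\,\omega(y\,|\,x,v)\,a(y\,|\,x)=(2\pi\beta^{-1})^{-\frac{d-k}{2}}\min\!\Bigl\{\mathrm{e}^{-\beta(V(x)+|v|^2/2)}\omega(y\,|\,x,v),\;\mathrm{e}^{-\beta(V(y)+|v'|^2/2)}\omega(x\,|\,y,v')\Bigr\},\]
which is invariant under $(x,y,v,v')\leftrightarrow(y,x,v',v)$. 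Detailed balance follows, and taking $f(x,y)$ independent of $x$ yields the stated invariance of $\nu_{\Sigma}$. The main obstacle is the bookkeeping of the change of variable: patching together the local branches $\Psi_x^{(j)}$ from Assumption~\ref{assump-psi-on-state-space}(3) into a single global integral, and showing that the non-generic configurations ($v\in\mathcal{N}_x$, equivalently $y\in C_x$) can be discarded without affecting the identity, which is the role of Proposition~\ref{prop-set-f-mala}(1) together with the symmetric description of $C_x$ via $\det(\nabla\xi(y)^T\nabla\xi(x))=\det(\nabla\xi(x)^T\nabla\xi(y))$.
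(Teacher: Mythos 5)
Your proposal is correct and follows essentially the same route as the paper: the same decomposition of the kernel into a diagonal part and an accepted part, the same change of variables from $v$ to $y$ with Jacobian $(2\tau)^{-\frac{d-k}{2}}|\det(U_x^TU_y)|$ whose symmetry in $(x,y)$ is the key observation, the same use of the null set $\mathcal{N}_x$ (equivalently $C_x$) to discard non-generic configurations, and the same factor-by-factor symmetry of the resulting joint density via the $\min$ form of the Metropolis ratio. The only part you defer --- rigorously patching the local branches $\Psi_x^{(j)}$ into a global integral identity --- is precisely what the paper's Lemma~\ref{lemma-on-push-forward-by-psi} carries out with a partition-of-unity argument, so your plan matches the paper's proof in both structure and substance.
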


To prove the ergodicity of the Markov chain generated by Algorithm~\ref{algo-mala-on-sigma}, one still needs to verify irreducibility. 
We refer to~\cite{M2AN_2007__41_2_351_0,convergence-hmc,livingstone2019,Schtte1999ConformationalDM} for related results in this direction in the unconstrained
case, which can be adapted to Markov chains involving constraints as in~\cite{Hartmann2008}.

As already discussed at the beginning of
Section~\ref{sec:construction_set_valued_MALA}, we have the freedom to choose
the function $\overline{V}$ when defining the set-valued proposal map $\Psi_x(v)$. 
In practice, choosing $\overline{V}$ as either~$V$ or some simplified (coarse-grained) approximation of~$V$ may be helpful in increasing the acceptance rate in the Metropolis procedure, and therefore allowing for a larger timestep~$\tau$. Alternatively, setting $\overline{V}\equiv 0$, \eqref{psi-by-nonlinear-constraint-eqn} becomes 
\begin{align}
  \textrm{Find } c \in \mathbb{R}^k \textrm{ such that } \xi\left(x + \sqrt{2\tau}U_x v + \nabla\xi(x) c\right) = 0\,.
  \label{rw-proposal-constraint}
\end{align}
This yields the multiple projection random walk Metropolis-Hastings (RWMH) algorithm on~$\Sigma$.
In this case, assuming furthermore at most one solution of~\eqref{rw-proposal-constraint}
is used (namely $|\Psi_x(v)|=0$ or~$1$), one recovers the MCMC algorithm proposed in~\cite{goodman-submanifold}. 

\subsection{Multiple projection Hybrid Monte Carlo on $T^*\Sigma$}
\label{subsec-hmc}

The second algorithm we consider generates a Markov chain on an extended configuration space, where a momentum~$p$ conjugated to the position~$x$ is introduced. We first make precise the extended target measure in Section~\ref{sec:extended_measure} and then the set-valued proposal function constructed using discretization schemes for constrained Hamiltonian dynamics in Section~\ref{sec:construction_set_valued_HMC}. The complete algorithm is next presented in Section~\ref{sec:presentation_HMC}, while its reversibility properties are stated in Section~\ref{sec:consistency_HMC}.

\subsubsection{Extended target measure}
\label{sec:extended_measure}

Suppose again that Assumption~\ref{assump-xi} holds. Instead of
considering~$v$ as coefficients in the tangent space as in~\eqref{fx-mala}, we work with momenta $p$ which belong to the cotangent space. The state of the system is then described by $(x,p)$, where, for a given position $x \in \Sigma$, the momentum~$p$ belong to~$T^*_x \Sigma$, the cotangent space of~$\Sigma$ at~$x$. This cotangent space can be identified with a linear subspace of~$\mathbb{R}^d$:
\[
T^*_x \Sigma = \left\{ p \in \mathbb{R}^d \, \middle| \, \nabla \xi(x)^T M^{-1} p = 0 \in \mathbb{R}^k \right\} \subset \mathbb{R}^d,
\]
where $M\in \mathbb{R}^{d\times d}$ is a constant symmetric positive definite mass matrix. 
Let us denote by
\[
T^* \Sigma = \left\{ (x,p) \in \mathbb{R}^{d} \times \mathbb{R}^{d} \,\middle| \, \xi(x)=0 \text{ and } \nabla \xi(x)^T M^{-1} p = 0 \right\} \subset \mathbb{R}^d \times \mathbb{R}^d
\]
the associated cotangent bundle, which can be seen as a submanifold of $\mathbb{R}^d \times \mathbb{R}^d$. The phase space Liouville measure on $T^* \Sigma$ is denoted by $\sigma_{T^* \Sigma}(dx \, dp)$. It can be written in tensorial form as
\begin{equation}
  \label{eq:tensor}
  \sigma_{T^*\Sigma}(dx \, dp)= \sigma^{M}_{\Sigma}(dx) \,\sigma^{M^{-1}}_{T^*_x \Sigma}(dp),
\end{equation}
where $\sigma^{M}_{\Sigma}(dx)$ is the surface measure on~$\Sigma$
induced by the scalar product $\langle x,\tilde x \rangle_{M}=x^T M \tilde x$
in $\mathbb{R}^d$, and $\sigma^{M^{-1}}_{T^*_x \Sigma}(dp)$ is the Lebesgue
measure on $T^*_x \Sigma$ induced by the scalar product
\begin{equation}
  \label{eq:scalar_product_M}
  \langle p,\tilde p\rangle_{M^{-1}}=p^T M^{-1} \tilde p
\end{equation}
in $\mathbb{R}^d$. In particular, $\sigma_\Sigma^M$
coincides with $\sigma_\Sigma$ in \eqref{measure-mu-on-m} when $M=I_{d}$ is the identity mass matrix. Note that, in contrast to the measures $\sigma_\Sigma^M(dx)$ and $\sigma^{M^{-1}}_{T^*_x \Sigma}(dp)$,  the measure
$\sigma_{T^* \Sigma}(dx \, dp)$ does not depend on the choice of the mass
tensor $M$~\cite[Proposition 3.40]{lelievre2010free}. For a given $x \in \Sigma$, the orthogonal projection $P_M(x):\mathbb{R}^d \to T_x^*\Sigma$ on $T_x^*\Sigma$ with respect to the scalar product $\langle\cdot, \cdot \rangle_{M^{-1}}$ is given by 
\begin{equation}
P_M(x) = I_d - \nabla\xi(x)
(\nabla\xi^TM^{-1}\nabla\xi)^{-1}(x) \nabla\xi(x)^TM^{-1}.
  \label{projection-pm}
\end{equation} 
  It is well-defined
on~$\Sigma$ thanks to Assumption~\ref{assump-xi} (see Remark~\ref{rmk-on-full-rank}). 

The target measure to sample is
  \begin{align}
    \mu(dx\,dp) = \frac{1}{Z_\mu}\,\mathrm{e}^{-\beta H(x,p)}\,\sigma_{T^*\Sigma}(dx\,dp)\,,
    \label{target-measure-in-phase-space}
  \end{align}
  where $H$ is the Hamiltonian 
  \begin{align*}
    H(x,p) = V(x) + \frac{1}{2}p^T M^{-1}p\,. 
  \end{align*}
   Note that, thanks to the tensorization property~\eqref{eq:tensor} and the separability of the Hamiltonian function,
\begin{equation}
\label{eq:mu_tensor}
\mu(dx \, dp) = \nu_{\Sigma}^M(dx) \, \kappa_x(dp),
\end{equation}
where~(see \cite[Equation~(3.25) in Section~3.2.1.3]{lelievre2010free})
\begin{equation}
  \label{eq:def_nu_M}
  \nu_{\Sigma}^M(dx) = (\mathrm{det} \, M)^{1/2} \left[\mathrm{det}\left(\nabla
  \xi(x)^T M^{-1} \nabla
  \xi(x)\right)\right]^{1/2}\left[\mathrm{det}\left(\nabla \xi(x)^T \nabla
  \xi(x)\right)\right]^{-1/2}\nu_{\Sigma}(dx)\,,
\end{equation}
with $\nu_{\Sigma}$ defined in~\eqref{measure-mu-on-m}, while $\kappa_x$ is a Gaussian measure on~$T^*_x \Sigma$:
\begin{equation}
  \label{eq:kappa}
  \kappa_x(dp)=(2\pi\beta^{-1})^{-\frac{d-k}{2}} \exp \left(-\frac{\beta p^T M^{-1} p}{2}\right)\sigma^{M^{-1}}_{T^*_x \Sigma}(dp).
\end{equation}
In particular, the marginal of $\mu$ in the variable~$x$ is~$\nu_{\Sigma}^M$, which can
be easily related to~$\nu_{\Sigma}$ by~\eqref{eq:def_nu_M} using some importance
sampling weight or by modifying the potential function $V$. 
It coincides in fact with~$\nu_{\Sigma}$ when $M=I_d$ is the identity mass matrix. The idea of the numerical method constructed in this section is to sample~\eqref{eq:mu_tensor}, and then to reweight the positions which are henceforth sampled according to~$\nu_{\Sigma}^M$ in order to obtain positions distributed according to~$\nu_{\Sigma}$.

For further use, let us introduce the momentum reversal map $\mathcal{R}$, which is an involution:
\begin{equation}
  \forall~(x,p)\in T^*\Sigma, \qquad \mathcal{R}(x,p) = (x,-p)\,.
  \label{involution-r}
\end{equation}
Notice that $\mu(dx\,dp)$ is invariant under $\mathcal{R}$. Denote by $\Pi: T^*\Sigma\rightarrow \Sigma$ the projection map 
\begin{equation}
\forall z=(x,p)\in T^*\Sigma, \qquad \Pi(z)=x\,.
  \label{projection-map-pi}
\end{equation}

\subsubsection{Construction of the set-valued map}
\label{sec:construction_set_valued_HMC}

Let us fix $(x,p)\in T^*\Sigma$. The objective of this section is to build a map which will be used to propose a move from $(x,p) \in T^*\Sigma$ to another point in $T^*\Sigma$ in the Metropolis-Hastings algorithm.

\paragraph{Projection of the unconstrained move: RATTLE with momentum reversal.} Given a timestep
$\tau > 0$ and a potential energy function~$\overline{V}$ (see Section~\ref{sec:construction_set_valued_MALA} after Theorem~\ref{thm-mala-on-sigma} for a discussion on the choices of the potential~$\overline{V}$), the multiple projection HMC algorithm uses the following unconstrained move: 
\begin{subnumcases}{}
  p^{1/2} = p -\frac{\tau}{2} \nabla \overline{V}(x) + \nabla\xi(x) \lambda_x, & \label{rattle-1st-step}\\
  x^{1} = x + \tau M^{-1}p^{1/2}, & \nonumber\\
  \xi(x^{1}) = 0, & \label{rattle-constraint-1} \\
  p^{1} = p^{1/2} - \frac{\tau}{2} \nabla \overline{V}(x^1) + \nabla\xi(x^{1})\lambda_p, & \nonumber\\
  \nabla\xi(x^1)^T M^{-1} p^{1} = 0, & \label{rattle-constraint-2}\,\\
  p^{1,-} = -p^{1}, & \label{rattle-momentum-reversal}
\end{subnumcases}
where $\lambda_x, \lambda_p\in \mathbb{R}^k$ are Lagrange multiplier functions
determined by the constraints~\eqref{rattle-constraint-1} and~\eqref{rattle-constraint-2}, respectively. 
Note that the numerical flow from $(x,p)$ to $(x^1,p^{1,-})$ combines the one-step
RATTLE scheme~\cite{geometric-integration} (i.e.\ \eqref{rattle-1st-step}--\eqref{rattle-constraint-2}) with a momentum reversal step in \eqref{rattle-momentum-reversal}.
For this reason, we will call the scheme
\eqref{rattle-1st-step}--\eqref{rattle-momentum-reversal} ``one-step RATTLE with momentum reversal''. 
Let us recall the main properties of the above scheme.

First, for any $(x,p) \in T^*\Sigma$ and $(c_x, c_p) \in \mathbb{R}^{k}\times \mathbb{R}^k$,
we denote by $F: T^*\Sigma\times \mathbb{R}^{k}\times \mathbb{R}^k\rightarrow
\mathbb{R}^d\times \mathbb{R}^{d}$ the map defined by 
\begin{equation}
  \begin{aligned}
    & F(x,p,c_x,c_v)= (\bar{x}, \bar{p}^{\,-}), \quad \mbox{where} \\
   & \bar{x}  = x + \tau M^{-1}p - \frac{\tau^2}{2}M^{-1} \nabla
    \overline{V}(x) + \tau M^{-1} \nabla\xi(x)c_x\,,\\
    & \bar{p}  = p -\frac{\tau}{2} \left(\nabla \overline{V}(x) + \nabla
    \overline{V}(\bar{x})\right) + \nabla\xi(x) c_x + \nabla\xi(\bar{x})c_p\,,\\
    & \bar{p}^{\,-} = - \bar{p}\,.
  \end{aligned}
\label{F-map-related-to-rattle}
\end{equation}
We can then write the one-step RATTLE with momentum reversal from $(x,p)$ to~$(x^1,p^{1,-})$ as 
\begin{equation}
  \left\{ \begin{aligned}
    & (x^{1}, p^{1,-}) = F(x,p, \lambda_x, \lambda_p)\,,\\
    & \xi(x^{1}) = 0\,,\\
    & \nabla\xi(x^1)^T M^{-1} p^{1,-} = 0\,. 
  \end{aligned} \right.
\label{psi-map-by-rattle}
\end{equation}
Once $\lambda_x$ (and therefore $x^{1}$) is known, the Lagrange multiplier
$\lambda_p$ (and therefore $p^{1,-}$) is uniquely defined and can actually be
analytically computed by enforcing the projection of the momenta with the
projection~$P_M$ (see \eqref{projection-pm}):
\begin{align}
\lambda_p =&\, -\left[\nabla\xi(x^1)^T
  M^{-1}\nabla\xi(x^{1})\right]^{-1}\left[\nabla\xi(x^1)^TM^{-1}\left(p
  -\frac{\tau}{2} \left(\nabla \overline{V}(x) + \nabla
  \overline{V}(x^1)\right)+ \nabla\xi(x) \lambda_x\right)\right]\,,  \notag \\
  p^{1,-} =&\, -P_M(x^1)\left(p - \frac{\tau}{2} \left(\nabla \overline{V}(x) + \nabla \overline{V}(x^1)\right) + \nabla\xi(x) \lambda_x\right)\,. \label{lambda-p-exp}
\end{align}
Therefore, the main task in finding $x^1, p^{1,-}$ is to solve the constraint equation \eqref{rattle-constraint-1} for~$\lambda_x$. 
Once $x^1\in \Sigma$ is known, $\lambda_x$ is determined by 
\begin{align}
	\lambda_x = \frac{1}{\tau}
	\left[(\nabla\xi^TM^{-1}\nabla\xi)(x)\right]^{-1}\nabla\xi(x)^T\left[x^1 - x - \tau M^{-1}p + \frac{\tau^2}{2} M^{-1} \nabla
	\overline{V}(x)\right]\,,
	\label{lambda-x-by-x1}
\end{align}
and hence $\lambda_p$, $p^{1,-}$ are uniquely given by \eqref{lambda-p-exp}.

It is crucial to realize that for given configurations $x\in
\Sigma$ and $x^1 \in
\Sigma$, the vector $p\in T_x^*\Sigma$ such that the one-step RATTLE with momentum reversal maps~$(x,p)$ to
$(x^1, p^{1,-})$ for some $p^{1,-}$ is uniquely determined by $p=G_{M,x}(x^1)$, where $G_{M,x}: \Sigma\rightarrow T^*_x\Sigma$ is defined as
\begin{equation}
  G_{M,x}(x^1) = \frac{1}{\tau} P_M(x)M \left(x^1-x+\frac{\tau^2}{2} M^{-1}\nabla \overline{V}(x)\right)\,,
  \label{Gx-hmc}
\end{equation}
and plays a role similar to the map $G_x$ defined in~\eqref{v-by-y} in
Section~\ref{subsec-mama-sigma}. Note that in \eqref{Gx-hmc} we define the map
$G_{M,x}$ with values in $T^*_x\Sigma$ without introducing a local basis,
whereas in \eqref{v-by-y} we define the map $G_x$ with values in
$\mathbb{R}^{d-k}$ instead of $T_x\Sigma$ using a local basis $U_x$. An
alternative definition of $G_{M,x}$ (equivalent to \eqref{Gx-hmc}) involving a local basis is used in the proof of Proposition~\ref{prop-differentiability-hmc} (see \eqref{gx-hmc-rewrite} in Section~\ref{sec-proofs-HMC}).

If the constraint equations~\eqref{rattle-constraint-1} and~\eqref{rattle-constraint-2} are satisfied,
we call $(\lambda_x, \lambda_p)$ an \textit{admissible pair of Lagrange
multipliers} from~$(x,p)$ to~$(x^1, p^{1,-})$. The one-step RATTLE scheme with momentum reversal has the following time-reversal symmetry.

\begin{lemma}
  Suppose that Assumption~\ref{assump-xi} is satisfied. Consider two states~$(x,p),\,(x^1, p^{1,-}) \in T^*\Sigma$ and suppose that $(\lambda_x, \lambda_p)$ is an admissible pair of Lagrange multipliers from~$(x,p)$
  to~$(x^1, p^{1,-})$. Then, 
  \begin{enumerate}
  \item $(\lambda_p, \lambda_x)$ is an admissible pair of Lagrange multipliers from $(x^1,p^{1,-})$ to $(x, p)$;
  \item Suppose that $(\lambda_x^{-}, \lambda_p^{-})$ is an admissible pair of
    Lagrange multipliers from $(x^{1}, p^{1,-})$ to $(\bar{x}, \bar{p})$  such that $\bar{x}=x$. Then, $(\lambda_x^{-}, \lambda_p^{-}) = (\lambda_p, \lambda_x)$ and $\bar{p} = p$.
  \end{enumerate}
  \label{lemma-1}
\end{lemma}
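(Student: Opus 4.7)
The plan is to establish both parts by direct algebraic verification, exploiting the fact that the momentum reversal step \eqref{rattle-momentum-reversal} is precisely what turns the ``second-half'' Lagrange multiplier of the forward step into the ``first-half'' multiplier of the reversed step (and vice versa), so that a single RATTLE+momentum reversal iteration started from $(x^{1}, p^{1,-})$ with the swapped pair $(\lambda_p, \lambda_x)$ returns to $(x, p)$.

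For part~1, I would substitute the candidate multipliers $(\lambda_p, \lambda_x)$ into \eqref{rattle-1st-step}--\eqref{rattle-momentum-reversal} with initial state $(x^{1}, p^{1,-})$ and check line by line that the output is $(x,p)$. The pivotal computation is that the intermediate momentum $\tilde{p}^{1/2} = p^{1,-} - \frac{\tau}{2}\nabla\overline{V}(x^{1}) + \nabla\xi(x^{1})\lambda_p$ reduces to $-p^{1/2}$, because $p^{1,-} = -p^{1}$ and $p^{1} = p^{1/2} - \frac{\tau}{2}\nabla\overline{V}(x^{1}) + \nabla\xi(x^{1})\lambda_p$ by the forward step. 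Hence the position update gives $\tilde{x} = x^{1} + \tau M^{-1}\tilde{p}^{1/2} = x^{1} - \tau M^{-1} p^{1/2} = x$, and the constraint $\xi(\tilde{x})=0$ is automatic since $x\in\Sigma$. A symmetric manipulation for the second half-step, using the multiplier $\lambda_x$ and the identity $p^{1/2} = p - \frac{\tau}{2}\nabla\overline{V}(x) + \nabla\xi(x)\lambda_x$, produces $\tilde{p} = -p$, so that after the momentum reversal one obtains $\tilde{p}^{-} = p$; the remaining cotangent constraint $\nabla\xi(x)^{T}M^{-1}p = 0$ is inherited from $(x,p)\in T^{*}\Sigma$.

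For part~2, uniqueness does most of the work. The hypothesis $\bar{x} = x$ together with the position equation $\bar{x} = x^{1} + \tau M^{-1}\hat{p}^{1/2}$ forces $\hat{p}^{1/2} = -p^{1/2}$. Writing $\hat{p}^{1/2}$ in terms of $\lambda_x^{-}$ and comparing with $-p^{1/2}$ re-expressed using $p^{1,-} = -p^{1}$ and the forward half-step formula, the potential terms cancel and one is left with $\nabla\xi(x^{1})(\lambda_x^{-} - \lambda_p) = 0$; Assumption~\ref{assump-xi} (via Remark~\ref{rmk-on-full-rank}) then yields $\lambda_x^{-} = \lambda_p$. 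Expanding the full-step momentum $\bar{p} = -\hat{p}$ and using this identification gives $\bar{p} = p + \nabla\xi(x)(\lambda_x - \lambda_p^{-})$, and imposing the cotangent constraint $\nabla\xi(\bar{x})^{T}M^{-1}\bar{p} = 0$ together with $\nabla\xi(x)^{T}M^{-1}p = 0$ leads to $\nabla\xi(x)^{T}M^{-1}\nabla\xi(x)(\lambda_x - \lambda_p^{-}) = 0$, from which positive-definiteness of $\nabla\xi(x)^{T}M^{-1}\nabla\xi(x)$ gives $\lambda_p^{-} = \lambda_x$, and hence $\bar{p} = p$. I do not anticipate any genuine obstacle: the argument is purely algebraic, and the only care required is bookkeeping which multiplier plays which role after the time reversal.
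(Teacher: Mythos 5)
Your proof is correct. The paper does not write out a proof of this lemma---it defers the first item to the standard time-reversal symmetry of RATTLE (citing Section~VII.1.4 of the Hairer--Lubich--Wanner reference) and the second to Lemma~3.2 of the earlier constrained-HMC paper---but the argument you give is exactly the one those references carry out: the cancellation $\tilde{p}^{1/2}=-p^{1/2}$ from the momentum reversal, the resulting return $\tilde{x}=x$, and for uniqueness the injectivity of $\nabla\xi(x^{1})$ to pin down $\lambda_x^{-}=\lambda_p$ followed by the positive definiteness of $\nabla\xi(x)^{T}M^{-1}\nabla\xi(x)$ (Remark~\ref{rmk-on-full-rank}) to pin down $\lambda_p^{-}=\lambda_x$ and $\bar{p}=p$. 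No gaps.
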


The first property is standard and expresses some form of symmetry of the
RATTLE scheme with momentum reversal~(see e.g.\ \cite[Section~VII.1.4]{geometric-integration}). The second expresses some
rigidity in the choice of the Lagrange multipliers and shows that genuinely
different choices of Lagrange multipliers necessarily correspond to a failure in the time-reversal symmetry of the algorithm; 
see the proof of Lemma~3.2 in~\cite{hmc-submanifold-tony}.
      
\paragraph{Theoretical results on the projections.}
 For $z = (x,p) \in T^*\Sigma$, we introduce the set
\begin{equation}
\begin{aligned}
  \mathcal{F}(z) = \Big\{ (x^1, p^{1,-}) \in T^*\Sigma\, \Big|& \,\exists\,(\lambda_x, \lambda_p) 
  \in \mathbb{R}^{k}\times \mathbb{R}^k, (x^1,p^{1,-}) = F(z,\lambda_x,\lambda_p), \\
  &~\mbox{such that}~ \xi(x^1)=0 ~\mbox{and}~\nabla\xi(x^1)^TM^{-1}p^{1,-}=0 \Big\}\,, 
  \end{aligned}
  \label{f-rattle-map}
\end{equation}
where $F$ is the map defined in \eqref{F-map-related-to-rattle}.
In other words, $\mathcal{F}(z)$ contains all the possible outcomes of the one-step RATTLE with momentum reversal starting from $z$.

Similarly to Propositions~\ref{prop-differentiability-mala} and~\ref{prop-set-f-mala}, we have
the following results on the differentiability of the Lagrange multiplier functions, as well as on the set $\mathcal{F}(z)$.
Their proofs are given in Section~\ref{sec-proofs-HMC}.
\begin{proposition}
  Define the set $C_{M,x}$ for $x \in \Sigma$ as 
  \begin{align}
    C_{M,x} = \left\{y\in \Sigma\,\middle|\,
    \det\left(\nabla\xi(y)^TM^{-1}\nabla\xi(x)\right)=0\right\}\,.
    \label{set-cm-x}
  \end{align}
  For all $z=(x,p)$, $z'=(x^1,p^{1,-})\in T^*\Sigma$ such that $ z'\in \mathcal{F}(z)$ and $x^1 \in \Sigma\setminus C_{M,x}$, the following properties are satisfied:
  \begin{enumerate}
    \item
There exists a neighborhood $\mathcal{O}\subset T^*\Sigma$ of $z$ and a $C^1$-differentiable function $\Upsilon_1:\mathcal{O}\rightarrow \Sigma$ such that~$x^1=\Upsilon_1(z)$ and $G_{M,\bar{x}}(\Upsilon_1(\bar{z}))=\bar{p}$ for all $\bar{z}=(\bar{x}, \bar{p})\in \mathcal{O}$.
      Moreover, this map~$\Upsilon_1$ is unique in the sense that any $C^1$-differentiable map $\widetilde{\Upsilon}_1: \mathcal{O}'\rightarrow
      \Sigma$, where $\mathcal{O}'\subset T^*\Sigma$ is a neighborhood of~$z$ such that~$x^1=\widetilde{\Upsilon}_1(z)$ and $G_{M,\bar{x}}(\widetilde{\Upsilon}_1(\bar{z}))=\bar{p}$
      for all $\bar{z}=(\bar{x}, \bar{p})\in \mathcal{O}'$, coincides with
      $\Upsilon_1$ on $\mathcal{O}\cap \mathcal{O}'$;
    \item
      Recall the function $F$ in \eqref{F-map-related-to-rattle} and consider the map
      $\Upsilon_1:\mathcal{O}\rightarrow \Sigma$ introduced in the previous
      item. The Lagrange multiplier functions
      \begin{equation}
	\begin{aligned}
	\lambda_x(\bar{z}) =& \frac{1}{\tau}
	\left[(\nabla\xi^TM^{-1}\nabla\xi)(\bar{x})\right]^{-1}\nabla\xi(\bar{x})^T\left[\Upsilon_1(\bar{z}) - \bar{x} - \tau M^{-1}\bar{p} + \frac{\tau^2}{2} M^{-1} \nabla
	\overline{V}(\bar{x})\right]\,,\\
	\lambda_p(\bar{z})=& -\left[(\nabla\xi^T M^{-1}\nabla\xi)(\Upsilon_1(\bar{z}))\right]^{-1}\\
	& \times \left[\nabla\xi(\Upsilon_1(\bar{z}))^TM^{-1}\left(\bar{p}
	-\frac{\tau}{2} \left(\nabla \overline{V}(\bar{x}) + \nabla
	\overline{V}(\Upsilon_1(\bar{z}))\right)+ \nabla\xi(\bar{x}) \lambda_x(\bar{z})\right)\right]\,,  
	\end{aligned}
	\label{multiplier-c-explicit-phase-space}
      \end{equation}
      where $\bar{z}=(\bar{x}, \bar{p})\in \mathcal{O}$, are $C^1$-differentiable functions on $\mathcal{O}$, such that $z' =
      F\left(z, \lambda_x(z), \lambda_p(z)\right)$ and~$F(\bar{z}, \lambda_x(\bar{z}),\lambda_p(\bar{z}))\in\mathcal{F}(\bar{z})$ for all~$\bar{z}\in \mathcal{O}$.
      Furthermore, any $C^1$-differentiable functions
      $\widetilde{\lambda}_x, \widetilde{\lambda}_p:\mathcal{O}'\rightarrow \mathbb{R}^k$, 
      where $\mathcal{O}'$ is a neighborhood of $z$ such that 
      $z' = F\left(z, \widetilde{\lambda}_x(z), \widetilde{\lambda}_p(z)\right)$ and $F(\bar{z},
      \widetilde{\lambda}_x(\bar{z}), \widetilde{\lambda}_p(\bar{z})) \in  \mathcal{F}(\bar{z})$ for all~$\bar{z}\in \mathcal{O}'$,
      coincide with the functions in \eqref{multiplier-c-explicit-phase-space} on $\mathcal{O}\cap \mathcal{O}'$;
    \item
      There exists a neighborhood $\mathcal{O}\subset
      T^*\Sigma$ of $z$ such that the map $\Upsilon:=F\left(\cdot, \lambda_x(\cdot),
      \lambda_p(\cdot)\right): \mathcal{O}\rightarrow
    \Upsilon(\mathcal{O})\subset T^*\Sigma$ (with $F$ defined
      in~\eqref{F-map-related-to-rattle} and the Lagrange multiplier functions $\lambda_x(\cdot), \lambda_p(\cdot)$ given in~\eqref{multiplier-c-explicit-phase-space}) is a $C^1$-diffeomorphism on $\mathcal{O}$ which satisfies $\Upsilon(z)=z'$ and $|\det D\Upsilon|\equiv 1$ on~$\mathcal{O}$.
      Moreover, $\Upsilon(\bar{z})$ is the only element of
      $\mathcal{F}(\bar{z})$ in the neighborhood $\Upsilon(\mathcal{O})$ of $z'$, for all $\bar{z} \in \mathcal{O}$.
  \end{enumerate}
  \label{prop-differentiability-hmc}
\end{proposition}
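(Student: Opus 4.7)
The plan is to establish items~1 and~2 by a direct application of the implicit function theorem, and then to derive item~3 using Lemma~\ref{lemma-1} combined with the symplecticity of the RATTLE integrator.

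For item~1, I would introduce $\Phi: T^*\Sigma \times \mathbb{R}^k \to \mathbb{R}^k$ defined by
\[
\Phi(\bar{z}, c) = \xi\!\left(\bar{x} + \tau M^{-1}\bar{p} - \frac{\tau^2}{2} M^{-1}\nabla \overline{V}(\bar{x}) + \tau M^{-1}\nabla\xi(\bar{x})\,c\right),\qquad \bar{z}=(\bar{x},\bar{p}),
\]
which satisfies $\Phi(z,\lambda_x)=0$ since $z'\in\mathcal{F}(z)$. The partial Jacobian with respect to $c$ at $(z,\lambda_x)$ is $\tau\,\nabla\xi(x^1)^T M^{-1}\nabla\xi(x)$, which is invertible precisely because $x^1\in\Sigma\setminus C_{M,x}$, see~\eqref{set-cm-x}. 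The implicit function theorem then produces a $C^1$ map $\bar{z}\mapsto c_x(\bar{z})$ on a neighborhood $\mathcal{O}$ of $z$ in $T^*\Sigma$, from which I set $\Upsilon_1(\bar{z})=\bar{x}+\tau M^{-1}\bar{p}-\frac{\tau^2}{2}M^{-1}\nabla\overline{V}(\bar{x})+\tau M^{-1}\nabla\xi(\bar{x})c_x(\bar{z})\in\Sigma$. The identity $G_{M,\bar{x}}(\Upsilon_1(\bar{z}))=\bar{p}$ is then verified by substituting into~\eqref{Gx-hmc} and using $P_M(\bar{x})\nabla\xi(\bar{x})=0$ together with $P_M(\bar{x})\bar{p}=\bar{p}$ for $\bar{p}\in T^*_{\bar{x}}\Sigma$. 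Uniqueness reduces to the uniqueness part of the implicit function theorem.

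Item~2 then follows by direct computation. Projecting the definition of $\Upsilon_1(\bar{z})$ against $\nabla\xi(\bar{x})^T$ and inverting $(\nabla\xi^T M^{-1}\nabla\xi)(\bar{x})$ yields the first formula of~\eqref{multiplier-c-explicit-phase-space}, so $\lambda_x$ is $C^1$; the expression~\eqref{lambda-p-exp} then provides $\lambda_p(\bar{z})$ as a $C^1$ function through $\Upsilon_1$. For uniqueness, any alternative $C^1$ pair $(\widetilde{\lambda}_x,\widetilde{\lambda}_p)$ satisfying the stated hypotheses defines a candidate $\widetilde{\Upsilon}_1$ which fulfils the conclusion of item~1; by uniqueness there, $\widetilde{\Upsilon}_1=\Upsilon_1$ on the overlap, and the explicit formulas force $(\widetilde{\lambda}_x,\widetilde{\lambda}_p)=(\lambda_x,\lambda_p)$.

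For item~3, the $C^1$-differentiability of $\Upsilon=F(\cdot,\lambda_x(\cdot),\lambda_p(\cdot))$ and the equality $\Upsilon(z)=z'$ are immediate. To show that $\Upsilon$ is a local diffeomorphism, I would invoke Lemma~\ref{lemma-1}(i): the pair $(\lambda_p,\lambda_x)$ is admissible from $z'$ back to $z$, so applying items~1 and~2 at $z'$ to the reversed one-step RATTLE with momentum reversal yields a $C^1$ map $\Upsilon'$ defined near $z'$ with $\Upsilon'(z')=z$. The rigidity statement Lemma~\ref{lemma-1}(ii) then ensures $\Upsilon'\circ\Upsilon=\mathrm{id}$ and $\Upsilon\circ\Upsilon'=\mathrm{id}$ on small enough neighborhoods. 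The identity $|\det D\Upsilon|\equiv 1$ comes from the symplecticity of the one-step RATTLE scheme on $T^*\Sigma$ (see e.g.~\cite[Section~VII.1.4]{geometric-integration}) composed with the involution $\mathcal{R}$ of~\eqref{involution-r}: both preserve the Liouville measure $\sigma_{T^*\Sigma}$, which is a power of the canonical symplectic form and is related to the weighted surface measures through the tensorization~\eqref{eq:tensor}. The final uniqueness statement, that $\Upsilon(\bar{z})$ is the only element of $\mathcal{F}(\bar{z})$ in the neighborhood $\Upsilon(\mathcal{O})$ of $z'$, follows after pulling back by $\Upsilon^{-1}$ from the uniqueness in item~1. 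The main technical obstacle I anticipate lies precisely in this Jacobian identity: while symplecticity of RATTLE is standard in $\mathbb{R}^{2d}$, transferring it cleanly to $T^*\Sigma$ and verifying that the explicit parametrization by $(\lambda_x,\lambda_p)$ combined with the momentum reversal introduces no extra Jacobian factors in the measure~\eqref{eq:tensor} requires some care.
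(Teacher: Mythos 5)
Your treatment of items~1 and~2 is essentially the paper's argument: both hinge on the invertibility of $\nabla\xi(x^1)^TM^{-1}\nabla\xi(x)$ guaranteed by $x^1\in\Sigma\setminus C_{M,x}$, and you apply the implicit function theorem to solve for the Lagrange multiplier $c$ while the paper solves for the position $y$ via the map $g(\bar z,y)=U_{M,\bar x}^TM^{-1}(G_{M,\bar x}(y)-\bar p)$; these are equivalent parametrizations of the same branch and your verification of $G_{M,\bar x}(\Upsilon_1(\bar z))=\bar p$ and of the uniqueness statements goes through. Your worry about transferring symplecticity to $T^*\Sigma$ is legitimate but is handled in the paper by citation (Leimkuhler--Skeel; Hairer--Lubich--Wanner), and the momentum reversal is trivially measure-preserving, so that is not where the difficulty lies.

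The genuine gaps are both in item~3. First, your route to local invertibility of $\Upsilon$ (construct $\Upsilon'$ at $z'$ and invoke Lemma~\ref{lemma-1}(ii) to get $\Upsilon'\circ\Upsilon=\mathrm{id}$) skips a step: Lemma~\ref{lemma-1}(ii) only applies once you already know that the \emph{position} component of $\Upsilon'(\Upsilon(\bar z))$ equals $\bar x$, and establishing that requires observing that both $\Pi(\Upsilon'(\Upsilon(\bar z)))$ and $\bar x$ are preimages of the same momentum under $G_{M,\Pi(\Upsilon(\bar z))}$, which is locally injective near $x$ because $x\notin C_{M,x^1}$. (The paper avoids this entirely: $|\det D\Upsilon|\equiv 1$ plus the inverse function theorem already gives the local diffeomorphism.) Second, and more seriously, the final claim --- that $\Upsilon(\bar z)$ is the \emph{only} element of $\mathcal{F}(\bar z)$ in the fixed neighborhood $\Upsilon(\mathcal{O})$, uniformly over $\bar z\in\mathcal{O}$ --- does not follow by ``pulling back by $\Upsilon^{-1}$ from the uniqueness in item~1''. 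The uniqueness in item~1 concerns $C^1$ \emph{functions} defined on neighborhoods; here one must exclude stray elements of the \emph{set} $\mathcal{F}(\bar z)$ accumulating at $z'$ that need not lie on any continuous branch through $z'$. The paper does this with a dedicated contradiction argument: assuming no such neighborhood exists, one extracts sequences $z^{(i)}\to z$ admitting two distinct images in $\mathcal{F}(z^{(i)})$ converging to $z'$, notes their positions must differ (momenta being determined by positions via \eqref{lambda-p-exp}--\eqref{lambda-x-by-x1}), and contradicts the fact that $(\bar x,y)\mapsto(\bar x,G_{M,\bar x}(y))$ is a local diffeomorphism near $(x,x^1)$. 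Some argument of this kind is indispensable and is missing from your plan.
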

\begin{proposition}
  Define the set $\mathcal{N}=\left\{(x,p)\in T^*\Sigma\,\middle|\,x\in
  \Sigma, \,p \in G_{M,x}(C_{M,x})\right\}$, where $G_{M,x}$ is the map in
  \eqref{Gx-hmc} and the set $C_{M,x}$ is defined in \eqref{set-cm-x}.
  The following properties are satisfied:
  \begin{enumerate}
    \item
$\mathcal{N}\subset T^*\Sigma$ is a closed set of measure zero, and
      can be defined equivalently as 
      \begin{equation}
	\mathcal{N} = \Big\{(x,p)\in T^*\Sigma\,\Big|\,\exists\, z'=(x^1, p^{1,-})\in \mathcal{F}(x,p), ~\det\left(\nabla\xi(x^1)^TM^{-1}\nabla\xi(x)\right)=0 \Big\};
	\label{set-n-alternative}
      \end{equation} 
    \item 
      For all $z \in T^*\Sigma\setminus \mathcal{N}$, the set $\mathcal{F}(z)$ contains at most a finite number of elements;
    \item 
      For $z \in T^*\Sigma\setminus \mathcal{N}$, let $n=|\mathcal{F}(z)|$ and
      denote by $\mathcal{F}(z) = \{z^{(1)}, z^{(2)}, \ldots, z^{(n)}\}$. When $n
      \ge 1$, there exists a neighborhood~$\mathcal{O}$ of~$z$, as well as~$n$
      pairs of $C^1$-differentiable functions $\lambda_x^{(i)},
      \lambda_p^{(i)}: \mathcal{O}\rightarrow \mathbb{R}^k$,
      which are locally given by \eqref{multiplier-c-explicit-phase-space} in
      Proposition~\ref{prop-differentiability-hmc} with $z'=z^{(i)}$,
      such that $z^{(i)} = F\left(z, \lambda_x^{(i)}(z), \lambda_p^{(i)}(z)\right) \in T^*\Sigma$ for any $1 \leq i \leq n$ and, for all $\bar{z}\in \mathcal{O}$, 
      \[
        \mathcal{F}(\bar{z}) = \left\{ F\left(\bar{z}, \lambda_x^{(1)}(\bar{z}), \lambda_p^{(1)}(\bar{z})\right),~ F\left(\bar{z}, \lambda_x^{(2)}(\bar{z}), \lambda_p^{(2)}(\bar{z})\right),~ \ldots,~ F\left(\bar{z}, \lambda_x^{(n)}(\bar{z}), \lambda_p^{(n)}(\bar{z})\right) \right\}\,.
      \]
      In particular, $|\mathcal{F}(\bar{z})|=n$ for all $\bar{z}\in \mathcal{O}$.
    \item
      $T^*\Sigma$ is the disjoint union of the subsets $\mathcal{N}$, $\mathcal{B}_0$,
      $\mathcal{B}_1, \ldots$, i.e.\
      $\dps T^*\Sigma=\left(\bigcup_{i=0}^{+\infty} \mathcal{B}_i\right)\cup
      \mathcal{N}$, where the sets
      \begin{equation}
	\begin{aligned}
	  \mathcal{B}_0 =& \left\{z\in T^*\Sigma\, \Big| \, |\mathcal{F}(z)| =
	0\right\}\,,\\
	  \mathcal{B}_i =& \left\{z\in T^*\Sigma\setminus
	  \mathcal{N}\, \Big| \, |\mathcal{F}(z)| = i\right\}\,, \quad i=1,2,\ldots\,.
	\end{aligned}
      \end{equation}
      are open subsets of $T^*\Sigma$.   
  \end{enumerate}
  \label{prop-set-f-hmc}
\end{proposition}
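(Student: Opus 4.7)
My plan is to follow the same pattern as the proof of Proposition~\ref{prop-set-f-mala}, using Proposition~\ref{prop-differentiability-hmc} as the local tool, Sard's theorem for the measure-zero statement, and the compactness of $\Sigma$ (Assumption~\ref{assump-xi}) for the finiteness and openness statements.

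For item~1, I would first verify that the two descriptions of $\mathcal{N}$ agree. If $p = G_{M,x}(y)$ for some $y \in C_{M,x}$, then the explicit formulas \eqref{lambda-x-by-x1} and \eqref{lambda-p-exp} produce Lagrange multipliers realizing $(y,q) \in \mathcal{F}(x,p)$ for an appropriate $q$, and the determinant condition transfers from $y \in C_{M,x}$. Conversely, the uniqueness relation between $p$ and $x^1$ noted just after \eqref{Gx-hmc} forces $p = G_{M,x}(x^1) \in G_{M,x}(C_{M,x})$. For the measure-zero claim, I would apply Sard's theorem to the $C^1$ map $\Phi : \Sigma \times \Sigma \to T^*\Sigma$ defined by $\Phi(x,y) = (x, G_{M,x}(y))$ (well-defined since $P_M(x)$ projects onto $T^*_x\Sigma$), both spaces having dimension $2(d-k)$. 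Using local tangent frames as introduced in Section~\ref{sec:construction_set_valued_MALA}, a direct computation shows that the restriction of $D_y G_{M,x}(y)$ to $T_y \Sigma$ is invertible iff $\det(\nabla\xi(y)^T M^{-1} \nabla\xi(x)) \neq 0$ (solving $P_M(x) M u = 0$ with $u \in T_y \Sigma$ reduces to $\nabla\xi(y)^T M^{-1} \nabla\xi(x) v = 0$). Hence the critical set of $\Phi$ equals $\widetilde C = \{(x,y) \in \Sigma \times \Sigma : \det(\nabla\xi(y)^T M^{-1}\nabla\xi(x)) = 0\}$, and $\mathcal{N} = \Phi(\widetilde C)$ has measure zero. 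Closedness follows because $\widetilde C$ is closed in the compact set $\Sigma \times \Sigma$, so its continuous image $\mathcal{N}$ is compact, hence closed in~$T^*\Sigma$.

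For item~2, fix $z = (x,p) \in T^*\Sigma \setminus \mathcal{N}$. By uniqueness of $p$ given $(x, x^1)$, every $(x^1, p^{1,-}) \in \mathcal{F}(z)$ satisfies $p = G_{M,x}(x^1)$, so $x^1 \in \Sigma \setminus C_{M,x}$ (otherwise $z \in \mathcal{N}$). The local uniqueness part of Proposition~\ref{prop-differentiability-hmc} then shows that each such element is isolated in $\mathcal{F}(z)$. If $\mathcal{F}(z)$ were infinite, compactness of $\Sigma$ together with the continuous dependence $x^1 \mapsto p^{1,-}$ given by \eqref{lambda-x-by-x1}--\eqref{lambda-p-exp} would produce a convergent subsequence whose limit lies in $\mathcal{F}(z)$ by continuity, contradicting isolation.

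For items~3 and~4, given $z = (x,p) \notin \mathcal{N}$ with $\mathcal{F}(z) = \{z^{(1)}, \dots, z^{(n)}\}$, Proposition~\ref{prop-differentiability-hmc} supplies for each $i$ a neighborhood $\mathcal{O}^{(i)}$ of $z$, $C^1$ functions $\lambda_x^{(i)}, \lambda_p^{(i)}$, and a diffeomorphism $\Upsilon^{(i)} = F(\cdot, \lambda_x^{(i)}, \lambda_p^{(i)})$ with $\Upsilon^{(i)}(z) = z^{(i)}$, the image $\Upsilon^{(i)}(\mathcal{O}^{(i)})$ being a neighborhood of $z^{(i)}$ containing no other element of $\mathcal{F}(z)$. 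I would intersect finitely many such neighborhoods and shrink further so that the images $\Upsilon^{(i)}(\mathcal{O})$ are pairwise disjoint (possible since the $z^{(i)}$ are distinct). The remaining step is to rule out extra elements of $\mathcal{F}(\bar z)$ for $\bar z$ near $z$: if sequences $\bar z_m \to z$ and $\bar z_m' \in \mathcal{F}(\bar z_m) \setminus \{\Upsilon^{(j)}(\bar z_m)\}_j$ existed, compactness of $\Sigma$ together with the boundedness of the associated multipliers and momenta would give a subsequence $\bar z_m' \to z^* \in \mathcal{F}(z)$, say $z^* = z^{(j)}$, so eventually $\bar z_m' \in \Upsilon^{(j)}(\mathcal{O})$ and then $\bar z_m' = \Upsilon^{(j)}(\bar z_m)$ by the local-uniqueness clause of Proposition~\ref{prop-differentiability-hmc}, a contradiction. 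This proves item~3 and shows that each $\mathcal{B}_i$ with $i \geq 1$ is open. For $\mathcal{B}_0$, the same compactness/continuity argument rules out sequences $z_m \to z \in \mathcal{B}_0$ with $\mathcal{F}(z_m) \neq \emptyset$. Disjointness of the decomposition uses the fact, already established when checking the alternative characterization, that $(x,p) \in \mathcal{N}$ forces $\mathcal{F}(x,p) \neq \emptyset$.

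The main obstacle will be the Sard-theorem computation in item~1, since it requires a careful identification, in local coordinates built from the frames $U_x$ of Section~\ref{sec:construction_set_valued_MALA}, of the critical set of $\Phi$ with $\widetilde C$. Everything else is a direct adaptation of the compactness/isolation arguments used for Proposition~\ref{prop-set-f-mala}, with $G_x$ replaced by $G_{M,x}$ and $C_x$ by $C_{M,x}$.
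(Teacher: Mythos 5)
Your proof is correct, and items 2--4 follow the paper's argument essentially verbatim (isolation via Proposition~\ref{prop-differentiability-hmc} plus compactness of $\Sigma$ and continuity of the momentum as a function of the position through \eqref{lambda-x-by-x1}--\eqref{lambda-p-exp}, then the same contradiction arguments for local constancy of $|\mathcal{F}(\bar z)|$ and for the openness of $\mathcal{B}_0$). The one place where you genuinely diverge is the first item. The paper applies Sard's theorem \emph{fiberwise}: for each fixed $x$, the set $C_{M,x}$ is identified (via \eqref{set-c-m-x-two-expressions}) as the critical set of $G_{M,x}:\Sigma\to T^*_x\Sigma$, so $G_{M,x}(C_{M,x})$ is Lebesgue-null in each fiber, and the global measure-zero statement for $\mathcal{N}$ is then inherited through the tensorization of $\sigma_{T^*\Sigma}$; closedness is proved by a separate sequential compactness argument. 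You instead apply Sard globally to the single map $(x,y)\mapsto(x,G_{M,x}(y))$ from $\Sigma\times\Sigma$ to $T^*\Sigma$, whose critical set you correctly identify with $\widetilde C=\{(x,y):\det(\nabla\xi(y)^TM^{-1}\nabla\xi(x))=0\}$ via the block-triangular structure of the differential (the kernel computation $P_M(x)Mu=0$, $u\in T_y\Sigma$, reducing to the invertibility of $\nabla\xi(y)^TM^{-1}\nabla\xi(x)$ is exactly the content of \eqref{set-c-m-x-two-expressions}). This buys you both conclusions at once: $\mathcal{N}$ is the continuous image of the compact set $\widetilde C$, hence closed, and it has measure zero directly in $T^*\Sigma$ without an implicit Fubini step. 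The price is the coordinate bookkeeping needed to verify that the critical set of the product map is exactly $\widetilde C$, which you rightly flag as the delicate point; note that for the measure-zero conclusion the inclusion $\widetilde C\subseteq\{\text{critical points}\}$ would already suffice, so even that verification can be relaxed to one inequality. Both routes are sound.
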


Let us point out that there is some type of symmetry in Proposition~\ref{prop-differentiability-hmc}
in the condition on the two states $z=(x,p),\,z'=(x^1,p^{1,-})\in T^*\Sigma$,
in the sense that $z'\in \mathcal{F}(z),\,x^{1}\in \Sigma\setminus C_{M,x}$  
if and only if~$z\in \mathcal{F}(z'),\,x\in \Sigma\setminus C_{M,x^{1}}$,
which can be easily seen from Lemma~\ref{lemma-1} and the definition of the set $C_{M,x}$ in \eqref{set-cm-x}.
Moreover, let $\Upsilon:\mathcal{O}\rightarrow \Upsilon(\mathcal{O})$
with~$\mathcal{O}$ a neighborhood of $z$, 
and $\widetilde{\Upsilon}: \widetilde{\mathcal{O}}\rightarrow
\widetilde{\Upsilon}(\widetilde{\mathcal{O}})$ with~$\widetilde{\mathcal{O}}$ a neighborhood of $z'$ 
be the $C^1$-diffeomorphisms considered in the third item of
Proposition~\ref{prop-differentiability-hmc} for $z,z'$ respectively.
Then, by possibly shrinking the neighborhoods (still denoted by
$\mathcal{O}$ and $\widetilde{\mathcal{O}}$), Lemma~\ref{lemma-1} actually implies that
$\widetilde{\Upsilon}\circ\Upsilon=\mbox{id}$ on~$\mathcal{O}$ and
$\Upsilon\circ\widetilde{\Upsilon}=\mbox{id}$ on~$\widetilde{\mathcal{O}}$.

Let us comment on the differences between Propositions~\ref{prop-differentiability-hmc} and~\ref{prop-set-f-hmc}, with a discussion similar to the one in Remark~\ref{rmk:non-tangential}.
On the one hand, the condition $z=(x,p)\in T^*\Sigma\setminus \mathcal{N}$ in
Proposition~\ref{prop-set-f-hmc} implies $x^1\in \Sigma\setminus C_{M,x}$ for all $z'=(x^1,p^{1,-})\in \mathcal{F}(z)$.
On the other hand, in contrast to Proposition~\ref{prop-set-f-hmc}, Proposition~\ref{prop-differentiability-hmc} also holds for $z\in
\mathcal{N}$, as long as $x^1\in \Sigma\setminus C_{M,x}$.
Again, the set $C_{M,x}$ is related to the non-tangential condition of the previous work~\cite[Definition~2.1]{hmc-submanifold-tony}. We also point out that, since $\mathcal{N}$ has zero Lebesgue measure in $T^*\Sigma$, it has zero probability measure under~$\mu$.

\paragraph{The set-valued proposal map.}
We are now in position to introduce the set-valued proposal map. 
For $z = (x,p) \in T^*\Sigma$, the set-valued
proposal map $\Phi(z)$ is a subset of~$\mathcal{F}(z)$ which contain the projection points obtained by
numerically computing the outcomes of the one-step RATTLE scheme with momentum reversal. In other words,
$\Phi(z)$ contains a state $(x^1,p^{1,-})\in \mathcal{F}(z)$ if $(x^1, p^{1,-})$
is obtained numerically by solving for the Lagrange multipliers $\lambda_x$
and~$\lambda_p$ from~\eqref{rattle-constraint-1}--\eqref{rattle-constraint-2}; i.e.\ $\Phi(z)$ is a solution of ~\eqref{rattle-constraint-1}--\eqref{rattle-constraint-2} starting from~$z$.
Depending on the number of numerical solutions
of~\eqref{rattle-constraint-1}--\eqref{rattle-constraint-2} for a given $z=(x,p)$, the set~$\Phi(z)$ can either be empty or contain multiple states.

Motivated by Propositions~\ref{prop-differentiability-hmc} and~\ref{prop-set-f-hmc}, we make the following assumption on the map $\Phi$. 
   
\begin{assumption}
  The following properties are satisfied by the map $\Phi$: 
  \begin{enumerate}
    \item
      For all $z\in T^*\Sigma$, the set $\Phi(z)$ contains at most a finite number of elements;
    \item
            The set 
\begin{equation}
  \begin{aligned}
    \mathcal{D}_\Phi = \Big\{(z,z')\in T^*\Sigma\times
    T^*\Sigma~\Big|\,z'\in \Phi(z)~\mbox{and}~z\in \Phi(z') \Big\}
  \end{aligned}
  \label{admissible-set-d-r}
\end{equation}
      is a non-empty measurable subset of $T^*\Sigma\times T^*\Sigma$. 
    \item
      For all $z=(x,p)\in T^*\Sigma$ such that $|\Phi(z)|=n\ge 1$, 
      denote by $\Phi(z) = \{z_1, \ldots, z_n\}$ with (upon reordering)
      $\{z'\in\Phi(z)\,|\, \Pi(z')\not\in C_{M,\Pi(z)}\}=\{z_1,\cdots,
      z_m\}$ where $m \le |\Phi(z)|$.
When $m \ge 1$, there exists a neighborhood $\mathcal{O}$ of~$z$, as well as $m$
      different $C^1$-diffeomorphisms $\Phi^{(j)}:\mathcal{O}\rightarrow
      \Phi^{(j)}(\mathcal{O})\subset T^*\Sigma$ (for $1 \le j \le m$) such that
      $z_j=\Phi^{(j)}(z)$ and $\Phi^{(j)}(\bar{z}) \in \Phi(\bar{z})$ for any~$1 \le j \le m$ and~$\bar{z} \in \mathcal{O}$. 
  \end{enumerate}
  \label{assump-phi-phase-space}
\end{assumption}
It is easy to adapt Example~\ref{ex:sphere} to build a map $\Phi$ which satisfies Assumption~\ref{assump-phi-phase-space} on the circle in $\mathbb{R}^2$.
  We refer to Section~\ref{sec:numerical_computation_constraint} for a discussion on set-valued maps satisfying these assumptions.

\subsubsection{Presentation of the algorithm}
\label{sec:presentation_HMC}

 The multiple projection Hybrid Monte Carlo algorithm is given in
Algorithm~\ref{algo-phase-space-mc-rattle} below. It reduces to the algorithm studied in~\cite{hmc-submanifold-tony} when the proposal is a singled-valued
map obtained by computing a single solution of the constraint
equations~\eqref{rattle-constraint-1}--\eqref{rattle-constraint-2} (namely
$|\Phi(z)|=0$ or $1$, where $\Phi$ is the proposal map introduced in the previous section). Notice also that Algorithm~\ref{algo-phase-space-mc-rattle} with $M$ the identity matrix and $\alpha = 0$ in~\eqref{update-momenta} below reduces to Algorithm~\ref{algo-mala-on-sigma}.
\begin{algorithm}[h]
  \caption{Multiple projection HMC on $\Sigma$}
  \label{algo-phase-space-mc-rattle}
  \begin{algorithmic}[1]
    \State Choose $z^{(0)} \in T^*\Sigma$ and $N > 1$. Set $i=0$.
    \While{$i < N$}
    \State Update the momenta of the current state~$z^{(i)}$ according
    to~\eqref{update-momenta}, and  denote the new state by $z^{(i+\frac{1}{4})}=z=(x,p)$.
    \State Compute the set $\Phi(z)$. If $\Phi(z)=\emptyset$, set
    $z^{(i+\frac{2}{4})}=z$ and \Goto{algo-hmc-end-of-while}.
    \State Randomly choose an element~$z' \in \Phi(z)$ with
    probability $\omega(z'\,|\,z)$.
    \State Compute the set $\Phi(z')$. 
    \State 
    Check whether $z\in\Phi(z')$ is true. If this is not true, set $z^{(i+\frac{2}{4})}=z$ and
    \Goto{algo-hmc-end-of-while}.
    \State Draw a uniformly distributed random number $r \in [0,1]$. Set 
    \begin{align*}
      z^{(i+\frac{2}{4})} = 
      \begin{cases}
        z' &\quad \mbox{if}~ r \le a(z'\,|\,z)\,, \\
        z  &\quad \mbox{otherwise}\,,
      \end{cases}
    \end{align*}
    with the acceptance probability $a(z'\,|\,z)$ defined in~\eqref{rate-phase-space-rattle}.
    \State Set $z^{(i+\frac{3}{4})}=\mathcal{R}(z^{(i+\frac{2}{4})})$.\label{algo-hmc-end-of-while}
    \State Update the momenta of the state~$z^{(i+\frac{3}{4})}$ according to~\eqref{update-momenta}, and denote the new state by~$z^{(i+1)}$. 
    \State $i \gets i+1$. 
    \EndWhile
  \end{algorithmic}
\end{algorithm}

Let us now discuss the different algorithmic steps of the multiple projection Hybrid Monte Carlo algorithm.
\paragraph{Momentum update.} Given the current state $z=(x,p) \in T^*\Sigma$, the momentum is updated according to 
\begin{align}
  p \leftarrow \alpha p + \sqrt{\frac{1-\alpha^2}{\beta}} \eta\,,
  \label{update-momenta}
\end{align}
for some parameter $|\alpha| < 1$, where $\eta$ is a Gaussian
random variable in the cotangent space $T^*_x\Sigma$ with identity covariance with respect to the
weighted inner product $\langle \cdot , \cdot \rangle_{M^{-1}}$. In practice,
$\eta$ can be obtained from $\eta = P_M(x)v$, where $v$ is a Gaussian random
variable on $\mathbb{R}^d$ with covariance matrix $M$, or from $\eta =
\sum_{i=1}^{d-k} w_i e_i$, where $(e_1, e_2, \ldots, e_{d-k})$ is a basis of $T^*_x\Sigma$ orthonormal for the scalar product~$\langle\cdot, \cdot\rangle_{M^{-1}}$, and $w_i\in \mathbb{R}$ for $1 \le i \le d-k$ are independent and identically distributed (i.i.d) standard Gaussian random variables on~$\mathbb{R}$. Note also that the update rule~\eqref{update-momenta} could be generalized to matrix-valued~$\alpha$. Let us comment that the momentum update is applied both at the beginning and at the end of one iteration of the multiple projection Hybrid Monte Carlo algorithm (see Algorithm~\ref{algo-phase-space-mc-rattle}). 
We refer to the discussion in Remark~\ref{rmk:rev} for a variant with only one stochastic update on momenta per iteration.

\paragraph{Choice of proposal.} 

Remember the set-valued map $\Phi$ introduced in the previous section. Once $\Phi(z)$ is available and non-empty (namely $|\Phi(z)| \geq 1$), one randomly
chooses an element $z'=(x^1,p^{1,-}) \in \Phi(z)$ with probability
$\omega(z'\,|\,z)$, and takes 
it as the proposal state. As in Section~\ref{sec:presentation_MALA}, the
probabilities $\omega(\cdot\,|\,z)$ are chosen in $(0,1]$ such that 
$\omega(\cdot\,|\,\cdot)$ is measurable and satisfies $\sum_{z' \in \Phi(z)} \omega(z'\, | \, z)=1$ for all $z \in T^*\Sigma$. They can be
chosen uniformly ({\em i.e.} $\omega(z'\, | \, z)=1/|\Phi(z)|$), or in a way such that it is more likely to jump to
states $x^1$ that are close to, or on the contrary far from, the current state $x$.

\paragraph{Reversibility check.}
Similarly to Algorithm~\ref{algo-mala-on-sigma}, a reversibility check step is
in general needed in the MCMC algorithm~(see e.g.\ \cite{hmc-submanifold-tony}). Specifically, after the proposal state
$(x^1, p^{1,-})$ is chosen, one needs to verify that $(x, p)\in \Phi(x^1, p^{1,-})$.
For this purpose, one computes $\Phi(x^1, p^{1,-})$ by solving the constraint
equations \eqref{rattle-constraint-1}--\eqref{rattle-constraint-2} and checks
whether $(x,p)$ belongs to $\Phi(x^1, p^{1,-})$. Note that the second claim in
Lemma~\ref{lemma-1} implies that it is in fact sufficient for the
reversibility check step to verify that the positions are the same (as already
pointed out in Section~3.1 of \cite{hmc-submanifold-tony}).

Lemma~\ref{lemma-1} implies that the one-step RATTLE scheme with
momentum reversal is indeed able to map the state $(x^{1}, p^{1,-})$ to $(x,p)$
with the pair of Lagrange multipliers $(\lambda_p, \lambda_x)$, i.e.\
$(x,p)\in \mathcal{F}(x^1,p^{1,-})$. This means that  
$(x,p)\in\Phi(x^1,p^{1,-})$ as long as the pair $(\lambda_p, \lambda_x)$ is
indeed found as a solution of the constraint
equations~\eqref{rattle-constraint-1}--\eqref{rattle-constraint-2} when  $\Phi(x^1, p^{1,-})$ is computed numerically. This leads the following remark, similar to Remark~\ref{rmk-Reversibility-check-mala} in Section~\ref{sec:presentation_MALA}.

\begin{remark}
  In the case when the numerical
  solver is able to find all the solutions to \eqref{rattle-constraint-1}--\eqref{rattle-constraint-2}, the ``reversibility check'' performed in Step $7$ of
  Algorithm~\ref{algo-phase-space-mc-rattle} is actually not needed. More generally, the reversibility check step will not lead to frequent rejections when one is able to compute almost all the solutions of \eqref{rattle-constraint-1}--\eqref{rattle-constraint-2}. This is an advantage compared to the algorithms in~\cite{goodman-submanifold} and~\cite{hmc-submanifold-tony}, since it leads to a larger acceptance rate.
  \label{rmk-Reversibility-check-hmc}
\end{remark}

\paragraph{Metropolis procedure.}
When the reversibility check step is successful, i.e.\ $z=(x, p) \in \Phi(x^1, p^{1,-})$,
a Metropolis-Hastings step is performed to decide either to accept or to
reject the proposed move to the state $z' = (x^1,p^{1,-})$, based on the acceptance probability 
\begin{align}
  a(z'\,|\,z) = \min\left\{1,\frac{\omega(z\,|\,z')}{\omega(z'\,|\,
  z)}\,\mathrm{e}^{-\beta \left(H(z')-H(z) \right)}\right\}\,. 
  \label{rate-phase-space-rattle}
\end{align}
In particular, when the elements $z'$ are drawn with uniform probabilities, i.e.\
for all $z \in T^*\Sigma$, $\omega(z'\,|\,z) = \frac{1}{|\Phi(z)|}$ for all $z' \in \Phi(z)$,
\eqref{rate-phase-space-rattle} simplifies as 
\begin{align*}
  a(z'\,|\,z) = \min\left\{1,~
  \frac{|\Phi(z)|}{|\Phi(z')|}\,\mathrm{e}^{-\beta \left(H(z') - H(z)\right)}\right\} \,. 
\end{align*}

\subsubsection{Consistency of the algorithm}
\label{sec:consistency_HMC}
Recall that $\mathcal{R}(x,p)=(x,-p)$ is the momentum reversal map defined in \eqref{involution-r}, and that $\mu$ is invariant by~$\mathcal{R}$.
Let us introduce the following definition of reversibility up to momentum
reversal. It is equivalent to the ``modified detailed balance'' considered
in~\cite{compressible-hmc} and~\cite{gardiner2004handbook} for example. 

\begin{definition}
  A Markov chain on $T^*\Sigma$ with transition probability kernel $q(z,dz')$ is reversible up to momentum reversal with
  respect to the probability distribution $\mu$ if and only if the following
  equality holds: For any bounded measurable function $f: T^*\Sigma \times T^*\Sigma \rightarrow \mathbb{R}$,
\begin{align}
  \int_{T^*\Sigma\times T^*\Sigma} f(z,z')\, q(z,dz')\,\mu(dz) =
  \int_{T^*\Sigma\times T^*\Sigma} f(\mathcal{R}(z'),\mathcal{R}(z))\,
  q(z,dz')\,\mu(dz) \,.
  \label{integral-detailed-balance-with-momentum-reversal}
\end{align}
  \label{def-rmr}
\end{definition}

Similarly to Definition~\ref{def-reversible}, one can verify that a Markov chain $(z^{(i)})_{i\ge 0}$ on $T^*\Sigma$ is reversible up to momentum reversal with respect to $\mu$ if and only if, for any
integer $n>0$, the law of the sequence $(z^{(0)},\, z^{(1)},\, \ldots, z^{(n)})$ is the same as the law of the sequence $\left(\mathcal{R}(z^{(n)}),\, \mathcal{R}(z^{(n-1)}),\, \ldots, \, \mathcal{R}(z^{(0)})\right)$ when $z^{(0)}$ is distributed according to~$\mu$. 
In particular, by considering $f(z,z')=f(z')$ in
\eqref{integral-detailed-balance-with-momentum-reversal} and using the fact
that $\mu$ is invariant under $\mathcal{R}$, $\mu$ is an invariant measure if
the Markov chain is reversible up to momentum reversal with respect to $\mu$.
A simple connection between the reversibility up to momentum reversal of
Definition~\ref{def-rmr} and the reversibility of Definition~\ref{def-reversible}
is the following: 
\begin{lemma}
  Consider a Markov chain $\mathscr{C}$ on $T^*\Sigma$ with transition probability
  kernel $q(z,dz')$ and let~$\widetilde{\mathscr{C}}$ be the Markov chain on
  $T^*\Sigma$ obtained by composing a transition according to $q$ with a
  momentum reversal: starting from $z$, the new state is $\mathcal{R}(z')$ where $z'\sim q(z,dz')$.
  Then, $\mathscr{C}$ is reversible with respect to $\mu$ if and only if
  $\widetilde{\mathscr{C}}$ is reversible up to momentum reversal with respect to $\mu$.
  \label{lemma-connection-reversibility-and-rmr}
\end{lemma}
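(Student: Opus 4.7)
The plan is to express both reversibility notions as integral identities against the joint kernel $q(z,dz')\,\mu(dz)$ and then to show that the change of test function $f \leftrightarrow h$ defined by $h(z,z') = f(z,\mathcal{R}(z'))$, which is an involution on bounded measurable functions because $\mathcal{R}^2 = \mathrm{id}$, transforms one identity exactly into the other.

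First, I would unfold the transition kernel of $\widetilde{\mathscr{C}}$. By construction, a $\widetilde{\mathscr{C}}$-transition starting at $z$ produces $\mathcal{R}(z')$ where $z' \sim q(z,\cdot\,)$, so that for every bounded measurable $g:T^*\Sigma\to\mathbb{R}$,
\[
\int_{T^*\Sigma} g(z'')\, \widetilde{q}(z,dz'') = \int_{T^*\Sigma} g(\mathcal{R}(z'))\, q(z,dz').
\]
Applying this identity inside the two integrals appearing in Definition~\ref{def-rmr} and using $\mathcal{R}^2 = \mathrm{id}$, the reversibility up to momentum reversal of $\widetilde{\mathscr{C}}$ with respect to $\mu$ is equivalent to the assertion that, for every bounded measurable $f:T^*\Sigma\times T^*\Sigma\to\mathbb{R}$,
\[
\int f\bigl(z,\mathcal{R}(z')\bigr)\, q(z,dz')\,\mu(dz) = \int f\bigl(z',\mathcal{R}(z)\bigr)\, q(z,dz')\,\mu(dz).
\]

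Second, I would observe that the map $f \mapsto h$ defined by $h(z,z') = f(z,\mathcal{R}(z'))$ is a bijection on bounded measurable functions on $T^*\Sigma\times T^*\Sigma$, with inverse $f(z,z') = h(z,\mathcal{R}(z'))$. Under this substitution, the two sides of the displayed identity above become $h(z,z')$ and $h(z',\mathcal{R}(\mathcal{R}(z))) = h(z',z)$, so the condition rewrites as
\[
\int h(z,z')\, q(z,dz')\,\mu(dz) = \int h(z',z)\, q(z,dz')\,\mu(dz)
\]
for every bounded measurable $h$, which is precisely the reversibility of $\mathscr{C}$ with respect to $\mu$ in the sense of Definition~\ref{def-reversible}. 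Since the substitution is invertible, the equivalence holds in both directions.

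The argument is purely formal, and there is no genuine analytic obstacle. The only subtlety to watch is that the momentum reversal lands on the correct slot of $f$ on each side, so that the $\mathcal{R}^2 = \mathrm{id}$ cancellations occur in the right places and the substitution $f \leftrightarrow h$ maps the reversibility-up-to-momentum-reversal identity for $\widetilde{\mathscr{C}}$ to the standard detailed balance identity for $\mathscr{C}$, and vice versa.
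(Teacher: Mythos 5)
Your proof is correct and follows essentially the same route as the paper: both unfold the kernel of $\widetilde{\mathscr{C}}$ as the push-forward of $q(z,\cdot)$ by the involution $\mathcal{R}$ and then exploit $\mathcal{R}\circ\mathcal{R}=\mathrm{id}$ together with a substitution of test functions. The only (minor, and arguably cleaner) difference is that you obtain both implications at once from the bijectivity of $f\mapsto f(\cdot,\mathcal{R}(\cdot))$ on bounded measurable functions, whereas the paper proves one direction by a chain of equalities and omits the converse as similar.
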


We have the following result on the Markov chain generated by Algorithm~\ref{algo-phase-space-mc-rattle}.
\begin{theorem}
  Under Assumptions~\ref{assump-xi} and \ref{assump-phi-phase-space},
  Algorithm~\ref{algo-phase-space-mc-rattle} generates a Markov chain on
  $T^*\Sigma$ that is reversible up to momentum reversal with respect to
  $\mu$. In particular, it preserves the probability measure~$\mu$ in \eqref{target-measure-in-phase-space}.
  \label{thm-algo-2-rattle}
\end{theorem}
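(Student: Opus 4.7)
The plan is to decompose one iteration of Algorithm~\ref{algo-phase-space-mc-rattle} as the kernel composition $Q = U \circ M \circ \mathcal{R} \circ U$, where $U$ denotes the momentum update~\eqref{update-momenta} (steps~3 and~10), $M$ denotes the proposal--reversibility-check--Metropolis kernel in steps~4--8, $\mathcal{R}$ is the momentum reversal in step~9, and I use the convention $(K_1K_2)(z,A)=\int K_1(z,dz')K_2(z',A)$. By Lemma~\ref{lemma-connection-reversibility-and-rmr}, it suffices to show that $\widetilde Q := Q\mathcal{R}$ is $\mu$-reversible in the sense of Definition~\ref{def-reversible} (extended to $T^*\Sigma$); the lemma will then give reversibility of $Q$ up to momentum reversal.

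I would dispose of the easy ingredients first. Using the tensorization $\mu(dx\,dp)=\nu_{\Sigma}^M(dx)\,\kappa_x(dp)$ from~\eqref{eq:mu_tensor} and the fact that $U$ updates only the momentum, $U$ is $\mu$-reversible because the OU step $p\mapsto\alpha p+\sqrt{(1-\alpha^2)/\beta}\,\eta$ is $\kappa_x$-reversible for each fixed~$x$ (direct computation of the joint Gaussian density). Moreover $U\mathcal{R}=\mathcal{R}U$ as kernels, by the symmetry $\eta\stackrel{d}{=}-\eta$ of the centered Gaussian noise. Combining these identities with $\mathcal{R}^2=\mathrm{id}$,
\begin{align*}
\widetilde Q = U M \mathcal{R} U \mathcal{R} = U M (\mathcal{R}\mathcal{R}) U = U M U,
\end{align*}
so the problem reduces to $\mu$-reversibility of $M$: writing $K^*$ for the $\mu$-adjoint and using $U^*=U$, $M^*=M$, together with $(K_1K_2)^*=K_2^*K_1^*$, one gets $(UMU)^*=U^*M^*U^*=UMU=\widetilde Q$.

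The heart of the proof is therefore to establish $\mu$-reversibility of $M$. Writing
\begin{align*}
M(z,dz') = r(z)\,\delta_z(dz') + \sum_{y\in\Phi(z),\,z\in\Phi(y)}\omega(y\,|\,z)\,a(y\,|\,z)\,\delta_y(dz'),
\end{align*}
with $r(z)$ the rejection probability, the diagonal $\delta_z$-term contributes symmetrically to both sides of~\eqref{usual-detailed-balance-condition-integral}, and only the off-diagonal part (supported on $\mathcal{D}_\Phi$) matters. Using Assumption~\ref{assump-phi-phase-space}(3) together with Propositions~\ref{prop-differentiability-hmc} and~\ref{prop-set-f-hmc}(4), I would cover $(T^*\Sigma\setminus\mathcal{N})\times T^*\Sigma$ by countably many open charts on which a branch $y=\Phi^{(j)}(z)$ is a $C^1$-diffeomorphism. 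Chart-locally, three facts combine: (i)~the symmetry $(z,y)\in\mathcal{D}_\Phi\Leftrightarrow(y,z)\in\mathcal{D}_\Phi$, built into the definition of $\mathcal{D}_\Phi$ and, via Lemma~\ref{lemma-1}, enforced by the step~7 reversibility check, so that inverse branches pair $z\leftrightarrow y$ consistently; (ii)~the unit-Jacobian property $|\det D\Upsilon|\equiv 1$ from Proposition~\ref{prop-differentiability-hmc}(3), which asserts that each branch preserves the Liouville measure $\sigma_{T^*\Sigma}$ (the symplectic content of RATTLE-with-momentum-reversal); (iii)~the Metropolis acceptance~\eqref{rate-phase-space-rattle} enforces $\omega(y\,|\,z)\,a(y\,|\,z)\,\mathrm{e}^{-\beta H(z)}=\omega(z\,|\,y)\,a(z\,|\,y)\,\mathrm{e}^{-\beta H(y)}$ on $\mathcal{D}_\Phi$. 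A chart-by-chart change of variables $z\leftrightarrow y$ then yields the required integral identity, and $\mathcal{N}$ is discarded as a $\mu$-null set via Proposition~\ref{prop-set-f-hmc}(1). The principal obstacle is precisely this global patching: assembling the branch-local diffeomorphisms and volume identities into a single measurable global change of variables for $M$, correctly accounting for branch multiplicity (Proposition~\ref{prop-set-f-hmc}(3)--(4)) and excising the critical set $\mathcal{N}$; by contrast, the adjoint algebra and the commutation $U\mathcal{R}=\mathcal{R}U$ are purely formal.
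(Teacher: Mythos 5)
Your proposal is correct and follows essentially the same route as the paper: the same decomposition into momentum refreshment, metropolized set-valued RATTLE, and momentum reversal, the same two key claims (reversibility of the Gaussian momentum update and of the Metropolis--RATTLE kernel $M$), and the same localization/unit-Jacobian/partition-of-unity argument for the global change of variables that the paper isolates as Lemma~\ref{lemma-change-of-variable-phi}. The only difference is in the bookkeeping for assembling the pieces: you collapse the composition to the palindrome $UMU$ via the commutation $U\mathcal{R}=\mathcal{R}U$ and the adjoint calculus, whereas the paper reaches the same conclusion through an explicit chain of integral substitutions exploiting reversibility up to momentum reversal of each factor.
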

The proofs of Lemma~\ref{lemma-connection-reversibility-and-rmr} and Theorem~\ref{thm-algo-2-rattle} are provided in Section~\ref{sec-proofs-HMC}.

\begin{remark}
  \label{rmk:rev}
The proof of Theorem~\ref{thm-algo-2-rattle} consists in checking that the probability measure $\mu$ is preserved by 
  the following transitions of Algorithm~\ref{algo-phase-space-mc-rattle}: $x^{(i)}\rightarrow x^{(i+\frac{1}{4})}$ (Step 3: momentum refreshment), $x^{(i+\frac{1}{4})}\rightarrow
x^{(i+\frac{2}{4})}$ (Steps 4-8: metropolized RATTLE with momentum reversal),
$x^{(i+\frac{2}{4})}\rightarrow x^{(i+\frac{3}{4})}$ (Step~9: momentum
  reversal), and $x^{(i+\frac{3}{4})}\rightarrow x^{(i+1)}$ (Step 10: momentum
  refreshment, which is exactly the same as Step 3).
  Thus an algorithm which combines any the three above steps 
  (momentum refreshment, metropolized RATTLE with momentum reversal and
  momentum reversal) preserves the measure $\mu$.
  By combining these three steps as in
  Algorithm~\ref{algo-phase-space-mc-rattle}, 
  we obtain a Markov chain which does not only leave~$\mu$ invariant but is
  actually reversible up to momentum reversal with respect to $\mu$. Finally, notice that combining the last Step 10 and the first Step 3 of Algorithm~\ref{algo-phase-space-mc-rattle} is actually equivalent to performing a single momentum update with a parameter~$\alpha^2$. 
  \label{rmk-different-steps-of-algo}
\end{remark}

As for Theorem~\ref{thm-mala-on-sigma}, one still needs to verify irreducibility in order to prove the ergodicity of the Markov chain generated by Algorithm~\ref{algo-phase-space-mc-rattle}. We also refer here to~\cite{M2AN_2007__41_2_351_0,convergence-hmc,livingstone2019,Schtte1999ConformationalDM} for related results in the unconstrained
case and to~\cite{Hartmann2008} for results involving constraints.

Before concluding this section, let us compare Algorithms~\ref{algo-mala-on-sigma} and~\ref{algo-phase-space-mc-rattle}. As already mentioned,
when $M=I_{d}$ and momenta are fully resampled, i.e.\ $\alpha=0$ in \eqref{update-momenta},
Algorithm~\ref{algo-phase-space-mc-rattle} is exactly the same as
Algorithm~\ref{algo-mala-on-sigma}, when considering only the position variable on $\Sigma$.
When $\alpha \neq 0$, however, the chain in the position variable obtained from Algorithm~\ref{algo-phase-space-mc-rattle} is not Markovian since momenta are only partially refreshed. This may be useful to prevent the system going back towards the previous position. Let us mention incidentally that it would be interesting to derive quantitative results supporting this reasoning.

Concerning the proofs of reversibility (see Section~\ref{sec-proofs}), the draw of the velocity and the update of position are considered as a whole in our analysis of Algorithm~\ref{algo-mala-on-sigma}, following~\cite{goodman-submanifold}.  Algorithm~\ref{algo-phase-space-mc-rattle} is analyzed differently, as the composition of separate steps which leave the measure $\mu$ invariant, namely the resampling of momentum, the momentum reversal and finally
the metropolization of the update by the deterministic (set-valued) map $\Phi$, in the spirit of~\cite{hmc-submanifold-tony}. These two approaches give different insights on the reversibility properties of these algorithms.

\subsection{Numerical computation of the projections to $\Sigma$}
\label{sec:numerical_computation_constraint}

In this short section, we explain how set-valued proposal maps 
can be obtained numerically for different types of maps
$\xi:\mathbb{R}^d\rightarrow \mathbb{R}^k$, and then we discuss how
Assumptions~\ref{assump-psi-on-state-space} and \ref{assump-phi-phase-space}
are satisfied in practice. We distinguish between two situations, depending on whether all solutions of the constraint equations~\eqref{psi-by-nonlinear-constraint-eqn} and~\eqref{rattle-constraint-1} are guaranteed to be found or not. It is  obvious that the numerical solvers discussed below find (at most) a finite number of solutions, so that the first item of Assumption~\ref{assump-psi-on-state-space} (resp. Assumption~\ref{assump-phi-phase-space}) is satisfied in practice.
Moreover, the numerical projection procedures we present below yield projection maps
such that, for Algorithm~\ref{algo-mala-on-sigma}, $\mbox{Im} \Psi_x
\setminus C_x$ is non-empty for all $x \in \Sigma$ (resp.\ for Algorithm~\ref{algo-phase-space-mc-rattle}, 
 $\{p \in T^*_x \Sigma | \exists z' \in \Phi(x,p), \Pi(z') \not \in C_{M,x}\}$
is non-empty for all $x \in \Sigma$), so that Algorithm~\ref{algo-mala-on-sigma} (resp.\ \ref{algo-phase-space-mc-rattle}) is non-trivial.

\paragraph{Case 1: All solutions of the constraint equation are guaranteed to be found.}
In some specific situations, all the solutions to the
equations~\eqref{psi-by-nonlinear-constraint-eqn}
and~\eqref{rattle-constraint-1} can be analytically computed. In other situations, for instance when $\xi$ is a scalar valued polynomial, all the solutions are guaranteed to be found up to an arbitrary small numerical precision. Indeed, in this case, numerical algorithms can be used to compute all the roots of a univariate
polynomial, such as the Julia package PolynomialRoots.jl~\cite{polynomialroots}. 

Let us discuss in this case the assumptions in the second item of  Assumptions~\ref{assump-psi-on-state-space} and~\ref{assump-phi-phase-space}. The analysis in Section~\ref{sec:construction_set_valued_MALA} (see in particular Proposition~\ref{prop-differentiability-mala}) implies that if one defines $\Psi_x(v)=\mathcal F_x(v) \setminus C_x$ (where~$C_x$ is defined by \eqref{c-x-in-prop1}), then $\mathcal{D}=\{(x,y) \in \Sigma\times \Sigma \,|\,  \det\left(\nabla\xi(y)^T \nabla\xi(x)\right)\neq0\}$.
 Similarly, for HMC, the function $\Phi$ can be defined as $\Phi(z)=\mathcal F(z) \setminus C_{M,\Pi(z)}$ (where 
 the set $C_{M,x}$ is defined by \eqref{set-cm-x} and
 $\Pi$ is defined by~\eqref{projection-map-pi}), and then, the set $\mathcal{D}_\Phi$ in Assumption~\ref{assump-phi-phase-space} is:
\begin{align*}
  \mathcal{D}_\Phi = \Big\{(z,z')\in T^*\Sigma\times
  T^*\Sigma \,\Big|\, &\exists\, x,x'\in \Sigma\times \Sigma\,,\, \det\left(\nabla\xi(x')^T M^{-1} \nabla\xi(x)\right)\neq0 ,\,\\
 & z=\big(x,G_{M,x}(x')\big),\, z'=\left(x', G_{M,x'}(x)\right) 
  \Big\}\,,
\end{align*}
where $G_{M,x}$ is the map defined in \eqref{Gx-hmc}. From the above definition, both sets $\mathcal{D}$ and $\mathcal{D}_\Phi$ are non-empty and measurable for any timestep $\tau>0$. The regularity properties in the third items of Assumptions~\ref{assump-psi-on-state-space} and~\ref{assump-phi-phase-space} are then given by Propositions~\ref{prop-differentiability-mala} and~\ref{prop-differentiability-hmc}.

\paragraph{Case 2: All solutions of the constraint equation cannot be guaranteed to be found.}
When~$\xi$ is either a multidimensional polynomial, or a nonlinear function, there are in general no numerical methods for computing all solutions of the constraint equations. The numerical methods under consideration can sometimes find all solutions, but sometimes only a subset of them. 

When the map $\xi:\mathbb{R}^d\rightarrow \mathbb{R}^k$ is defined by
polynomials, the constraint equations~\eqref{psi-by-nonlinear-constraint-eqn}
and \eqref{rattle-constraint-1} are polynomial systems. Finding numerical solutions of polynomial systems is in fact a well
studied topic in the field of numerical algebraic geometry~\cite{Sommese2005},
where the primary computational approach is the homotopy continuation method.
In particular, there are theoretical results in algebraic geometry, e.g.\ Bertini's theorem~\cite{hartshorne2010algebraic}, which
guarantee that the homotopy continuation method is able to compute all
solutions of polynomial systems in principle -- although the actual
computation of all solutions may not be achieved in practice due to various
numerical and implementation issues. The homotopy continuation method has been
implemented in several numerical packages, such as
Bertini~\cite{BHSW06} and HomotopyContinuation.jl~\cite{homotopy-continuation-julia}. They can be used to solve multiple solutions of polynomial systems.

For generic nonlinear constraints, Newton's method is commonly used to solve the constraint equations, as done
in~\cite{leimkuhler-reich-04,Tony-constrained-langevin2012,goodman-submanifold,hmc-submanifold-tony}.
It finds a solution provided that the initial
guess is not too far away from some solution. In general, it has a fast
(local) convergence rate and is easy to implement. 

\begin{remark}
In our analysis, we assume that the set-valued proposal maps are deterministic functions of the current states. For example, they can be built using Newton's method with either various fixed initial guesses for the Lagrange multipliers or various initial guesses which depend only on the current state (positions and momenta) in a deterministic way. Actually, these initial guesses could vary from one call to the projection procedure to find the Lagrange multipliers to another. For example, they could be drawn randomly at each call of the projection procedure, independently from all the other random variables needed in the algorithm. \label{rmk-initial-guesses}\end{remark}

Let us finally discuss the second item of
Assumptions~\ref{assump-psi-on-state-space} and~\ref{assump-phi-phase-space}
for the set-valued proposal maps obtained by the methods discussed here. It is
expected that the set $\mathcal{D}$ in
Assumption~\ref{assump-psi-on-state-space} (resp. the set $\mathcal{D}_\Phi$
in Assumption~\ref{assump-phi-phase-space}) will be non-empty provided that
the timestep $\tau$ is not too large, in particular when Newton's method is
used (see~Lemma~2.8 of \cite{hmc-submanifold-tony} for a related result). 
The measurability of the sets $\mathcal{D}$ and $\mathcal{D}_\Phi$ is 
guaranteed by the expected continuous behavior of numerical solvers (see the discussion below).

The regularity properties in the third items of
Assumptions~\ref{assump-psi-on-state-space} and~\ref{assump-phi-phase-space}
need to be checked for the specific numerical method at hand; see
Section~2.3.3 of~\cite{hmc-submanifold-tony} for the Newton scheme.
More precisely, for Assumption~\ref{assump-psi-on-state-space}, Proposition~\ref{prop-differentiability-mala}
  implies that one can choose $\mathcal{O}=\cap_{j=1}^{m} G_x(\mathcal{Q}_j)$, where $m=|\Psi_x(v)\setminus C_x|$, and  
  define $\Psi_x^{(j)}(\bar{v})= (G_x|_{\mathcal{Q}_j})^{-1}(\bar{v})$ for $\bar{v}\in
  \mathcal{O}$, where $\mathcal{Q}_j$ is the neighborhood of $y_j$ such that
  $G_x|_{\mathcal{Q}_j}$ is a $C^1$-diffeomorphism.
Therefore, the third item of Assumption~\ref{assump-psi-on-state-space} will be satisfied as long as the solutions 
  $\Psi_x^{(j)}(\bar{v})$ can be numerically computed for~$\bar{v}$ belonging to a neighborhood of $v$. For a numerically computed element~$y_j \in \Psi_x(v)$ such that the non-tangential
  condition in Definition~2.1 of~\cite{hmc-submanifold-tony} is satisfied
 (i.e.\ $y_j\in \Psi_x(v)\setminus C_x$), the numerical solvers typically
  behave continuously for~$\bar{v}$ belonging to a neighborhood of $v$
  (see Section~2.3.3 of \cite{hmc-submanifold-tony} for a discussion concerning Newton's method), and thus, it is expected that $\Psi_x^{(j)}(\bar{v})$ is
  indeed in the list of the numerical solutions for~$\bar{v}$ belonging to a neighborhood of $v$. Likewise, for Assumption~\ref{assump-phi-phase-space},
 since $\Pi(z_j)\not\in C_{M,\Pi(z)}$ for~$1 \le j
  \le m$, Proposition~\ref{prop-differentiability-hmc}
  implies that one can choose $\Phi^{(j)}(\bar{z})= \Upsilon^{(j)}(\bar{z})$ for $\bar{z}\in
  \mathcal{O}=\cap_{j=1}^{m} \mathcal{O}_j$,
  where $\mathcal{O}_j$ is the neighborhood of $z$ and $\Upsilon^{(j)}:
  \mathcal{O}_j\rightarrow \Upsilon^{(j)}(\mathcal{O}_j)\subset T^*\Sigma$ is the
  $C^1$-diffeomorphism considered in the third item of Proposition~\ref{prop-differentiability-hmc} for~$z'=z_j$.
Therefore, the third item of Assumption~\ref{assump-phi-phase-space} is satisfied as long as the solutions 
  $\Phi^{(j)}(\bar{z})$ can be numerically computed for $\bar{z}$ belonging to a neighborhood of $z$, which is again expected to be the case for many numerical solvers.
  

\section{Numerical examples}
\label{sec-example}

We illustrate the algorithms introduced in Section~\ref{sec-two-algorithms} to
sample probabilities measures on two submanifolds: a two-dimensional torus in
a three-dimensional space (see Section~\ref{subsec-ex1}), already considered
in~\cite{goodman-submanifold,hmc-submanifold-tony}; and disconnected
components which are subsets of a nine-dimensional sphere (see Section~\ref{subsec-example-2}). We use Algorithm~\ref{algo-phase-space-mc-rattle} with $M=I_d$ and varying $\tau$ and $\alpha$. Remember that when $\alpha=0$, since the mass is the identity, Algorithm~\ref{algo-phase-space-mc-rattle} is equivalent to  Algorithm~\ref{algo-mala-on-sigma}.
All simulations were done using the Julia  programming language (see \url{https://github.com/zwpku/Constrained-HMC} for the codes) and
carried out on the same desktop computer which has 8 CPUs (Intel Xeon E3-1245), 8G memory, and operating system Debian 10.

\subsection{Two-dimensional torus in a three-dimensional space}
\label{subsec-ex1}

In this example, we apply Algorithm~\ref{algo-phase-space-mc-rattle} to study
a two-dimensional torus in $\mathbb{R}^3$ (see
Figure~\ref{fig-ex1-torus-potential}, left). More precisely, the torus is a
two-dimensional submanifold of $\mathbb{R}^3$ defined as the solution in
$(x_1,x_2,x_3)$ of the equation
\begin{align}
  \left(R-\sqrt{x_1^2+x_2^2}\right)^2 + x_3^2 - r^2 = 0 \,, \quad x = (x_1, x_2, x_3)^T\in \mathbb{R}^3\,,
  \label{ex1-torus-equation-with-sqrt}
\end{align} 
where $0 < r < R$. 
After simple algebraic calculations, we see that \eqref{ex1-torus-equation-with-sqrt} is equivalent to the polynomial equation
\begin{align}
  \xi(x) = 0\,,\quad \mbox{where}~~\xi(x) = \left(R^2-r^2 + x_1^2+x_2^2+x_3^2\right)^2 - 4R^2\left(x_1^2 + x_2^2\right)\,.
  \label{ex1-torus-xi}
\end{align}
Therefore, the torus is the zero level set $\Sigma$ of the polynomial $\xi$ as defined in \eqref{ex1-torus-xi}.
In the following, we will use the following parametrization of the torus: 
\begin{align}
  x_1 = (R+r\cos \phi) \cos\theta, \quad
  x_2 = (R+r\cos \phi) \sin\theta, \quad
  x_3 = r \sin\phi,
  \label{ex1-polar}
\end{align}
where $(\phi, \theta) \in [0, 2\pi)^2$. In particular, it
can be verified that the normalized surface measure of the torus in the variables $\phi,\theta$ (still denoted by~$\sigma_\Sigma$ with some abuse of notation) is given by
\begin{align}
  \sigma_\Sigma(d\phi\,d\theta) = \frac{1}{(2\pi)^2} \left(1 + \frac{r}{R}\cos\phi\right)\,d\phi\,d\theta\,.
  \label{ex1-sigma}
\end{align}

To apply Algorithm~\ref{algo-phase-space-mc-rattle}, the constraint equation
\eqref{rattle-constraint-1} needs to be solved to compute the set~$\Phi(x,p)$.
Once \eqref{rattle-constraint-1} is solved, the solution to the constraint equation \eqref{rattle-constraint-2} for momenta can be computed directly from \eqref{lambda-p-exp}. 
Since $\xi$ defined by \eqref{ex1-torus-xi} is a fourth order polynomial, 
\eqref{rattle-constraint-1} has at most four real solutions. 
In this experiment, besides Newton's method that provides at most one solution of~\eqref{rattle-constraint-1} for a given initial guess,
we also apply the Julia packages PolynomialRoots.jl~\cite{polynomialroots} and
HomotopyContinuation.jl~\cite{homotopy-continuation-julia}, which can compute multiple
solutions of \eqref{rattle-constraint-1}. 
Since each iteration of Algorithm~\ref{algo-phase-space-mc-rattle} preserves
the target distribution regardless of the choice of the numerical solver, we can
also use different solvers to compute $\Phi(x,p)$ in different MCMC iterations. 
By exploiting the freedom provided by Algorithm~\ref{algo-phase-space-mc-rattle},
we obtain the following schemes that we will use in the subsequent numerical tests (see also the summary in Table~\ref{tab-schemes}): 
\begin{itemize}
  \item
  ``Newton'', where Newton's method is used to compute the set $\Phi(x,p)$ at each MCMC iteration. 
  \item
   ``PR'' (resp. ``Hom''), where the PolynomialRoots.jl (resp. the HomotopyContinuation.jl) Julia
   package is used at each MCMC iteration to compute the set $\Phi(x,p)$
   and one state is chosen randomly with uniform probability. 
 \item
   ``PR-far'' (resp. ``Hom-far''), where the PolynomialRoots.jl (resp. the
    HomotopyContinuation.jl) Julia package is used at each MCMC iteration to compute the set $\Phi(x,p)$. The states in $\Phi(x,p)$ are sorted in ascending order based on the Euclidean distances between their position components and the current position $x$, and one state is randomly chosen according to the probability distributions in Table~\ref{ex1-pj-nonuniform}.
 \item
   ``PR50-far'' (resp. ``Hom50-far''), where the set $\Phi(x,p)$ is computed using 
PolynomialRoots.jl (resp. HomotopyContinuation.jl) Julia package every $50$ MCMC iterations, while
    Newton's method is used in the other iterations. As in the previous item, when multiple states in~$\Phi(x,p)$ are obtained, 
    they are sorted in ascending order based on the Euclidean distances between their position components to the current
    position~$x$, and one state is chosen
   randomly according to the probability distributions in Table~\ref{ex1-pj-nonuniform}.
\end{itemize}
Note that in the schemes ``$\ast$-far'', the probability distributions in Table~\ref{ex1-pj-nonuniform} slightly favor states that are at larger distances from the current state.
As hybrid schemes, we expect ``PR50-far'' and ``Hom50-far'' to have a better sampling
efficiency to explore the space than "Newton", together with a lower computational cost than ``PR"(``Hom'') or ``PR-far'' (``Hom-far'').
\begin{table}[htp]
\centering
  \begin{tabular}{c|c|c|c}
    \hline
    Scheme & Solver & Period & Distribution $\omega(\cdot\,|\,\cdot)$ \\
    \hline
    Newton & Newton& N/A & N/A\\
    PR & PolynomialRoots & $1$ & uniform\\
    PR-far & PolynomialRoots & $1$ & non-uniform  \\
    PR50-far & Newton + PolynomialRoots & $50$ & non-uniform \\
    Hom & HomotopyContinuation & $1$ & uniform\\
    Hom-far & HomotopyContinuation & $1$ & non-uniform  \\
    Hom50-far & Newton + HomotopyContinuation & $50$ & non-uniform \\
    \hline
  \end{tabular}
  \caption{Sampling schemes on the torus based on Algorithm~\ref{algo-phase-space-mc-rattle}. 
 For each scheme, the column ``Solver'' shows the numerical solvers used for solving the constraint equation \eqref{rattle-constraint-1}. The column ``Period'' shows how often multiple solutions of the constraint equation are computed, using either the PolynomialRoots.jl or HomotopyContinuation.jl Julia packages. When multiple solutions (states) are found, one state is 
  chosen randomly according to either a uniform distribution (``uniform'') or
  to the probability distributions in Table~\ref{ex1-pj-nonuniform} (``non-uniform'').
    \label{tab-schemes}}
\end{table}

We fix $R=1.0$ and $r=0.5$ in
\eqref{ex1-torus-equation-with-sqrt}--\eqref{ex1-sigma}. In each test, we run
$N=10^7$ MCMC iterations using
Algorithm~\ref{algo-phase-space-mc-rattle}, with either $\alpha=0$ or $\alpha=0.7$  in \eqref{update-momenta}
for updating the momenta $p$. The initial position $x$ is fixed to be $(R-r, 0, 0)^T$, while the initial momentum $p$ is drawn randomly according to~\eqref{eq:kappa}.
When Newton's method is used to compute $\Phi(x,p)$ given the current state $(x,p)$
(by solving the Lagrange multiplier $\lambda_x$ from the constraint equation \eqref{rattle-constraint-1}), 
we set $\lambda_x$ to zero initially and the convergence criterion is based on
the Euclidean norm of~$\xi$ being sufficiently small,\footnote{The convergence
criterion we consider is sufficiently tight for our test example in order to remove the
observed bias on the invariant measures due to non-reversibility. However, one
could of course further refine the convergence up to machine precision by
performing a few extra Newton iterations. Alternatively, there are ways to
assess the convergence of fixed point methods without introducing a
convergence threshold, for example by monitoring the norm of the differences
between two successive iterates as the algorithm proceeds, see Section~3 of~\cite{round-off-error-rk-method}.} namely $|\xi(x)|<10^{-8}$. At most~$10$ Newton's iterations are performed to solve the constraint equation \eqref{rattle-constraint-1}. As explained above, when multiple proposal states are found (by the PolynomialRoots.jl or HomotopyContinuation.jl Julia packages), we randomly choose one state either with
uniform probability or according to the probability distributions shown in
Table~\ref{ex1-pj-nonuniform}, based on the Euclidean distances of their position components to the current position $x$. In the reversibility check step of Algorithm~\ref{algo-phase-space-mc-rattle}, two states are considered the same if their Euclidean distance in the position component is less than~$10^{-6}$.

\begin{remark}
Let us mention that it is important to choose the tolerance for the reversibility check larger than the tolerance for the convergence of the Newton algorithm. When the tolerance for the reversibility check is too small, it may happen that the two states are in fact the same, but, due to the incomplete convergence of the Newton step and/or numerical round-off errors, the states are considered to be different.
\end{remark}

\begin{figure}[t!]
\includegraphics[width=0.9\textwidth]{./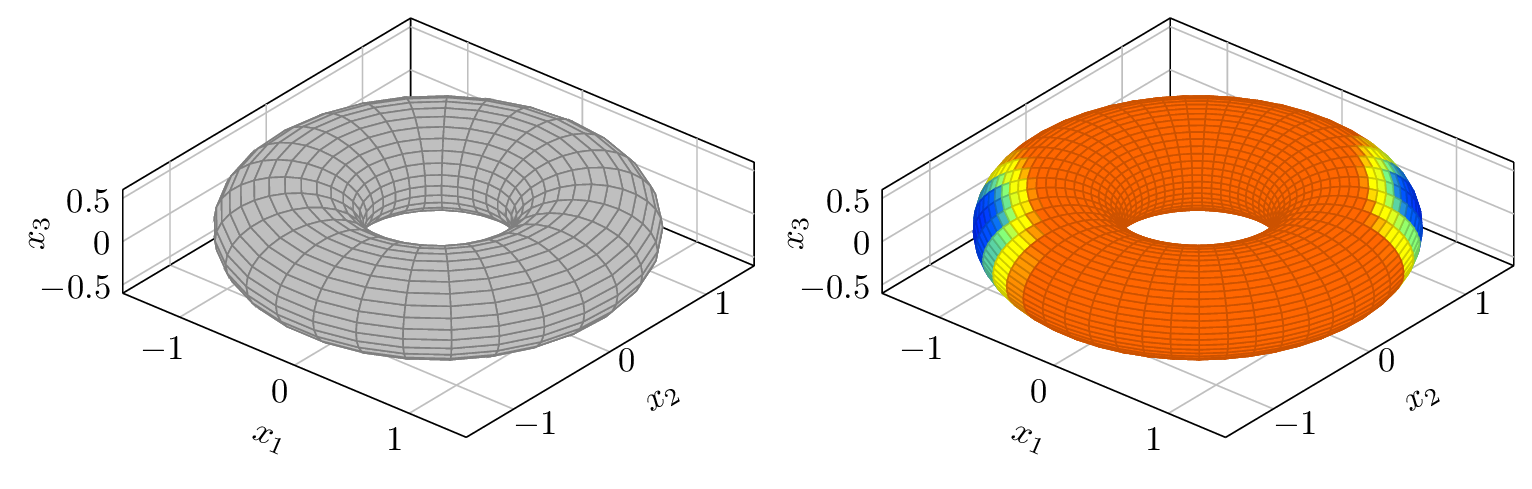}
\centering
  \caption{Left: Torus in $\mathbb{R}^3$. Right: Potential $V(x)$ in
  \eqref{ex1-mu-pot} on the torus (blue color corresponds to low values of
  potential). The function $V$ has two local minima at $((R+r)/\sqrt{2},
  (R+r)/\sqrt{2}, 0)^T$ and $(-(R+r)/\sqrt{2}, -(R+r)/\sqrt{2}, 0)^T$.
  \label{fig-ex1-torus-potential}}
\end{figure}
\begin{table}[t!]
  \center
  \begin{tabular}{c|cccc}
    $n$ & $\omega(z_1\,|\,z)$ & $\omega(z_2\,|\,z)$ & $\omega(z_3\,|\,z)$ & $\omega(z_4\,|\,z)$ \\
    \hline
  $1$ & $1.0$ & & & \\
  $2$ & $0.4$ & $0.6$ & & \\
  $3$ & $0.2$ & $0.4$ & $0.4$ & \\
  $4$ & $0.2$ & $0.3$ & $0.3$ & $0.2$ \\
  \end{tabular}
  \caption{Non-uniform probability distributions $\omega$. At each $z=(x,p)$,
  assuming that $\Phi(z)=\{z_1, \ldots, z_n\}$, $1 \le n \le 4$, where the
  states $z_j$ are sorted in ascending order according to the distances of
  their position components to~the current position $x$, one state~$z_j$ is
  chosen randomly according to the probability distribution $\omega(\cdot\,|\,z)$ on the corresponding row. \label{ex1-pj-nonuniform}}
\end{table}

\paragraph{Uniform law on the torus.}
We start by considering the sampling of the distribution $\sigma_\Sigma$ in~\eqref{ex1-sigma}, i.e.~$V=0$. The aim is to validate Algorithm~\ref{algo-phase-space-mc-rattle} and to investigate the performance of the different sampling schemes in Table~\ref{tab-schemes}.
In each case, we choose the step-size $\tau = 0.8$ in the one-step RATTLE with
momentum reversal scheme~\eqref{psi-map-by-rattle}.
The results are summarized  in Tables~\ref{percentage-of-diff-strategy}--\ref{tab-ex1-summary} and
Figure~\ref{fig-ex1-angle-distribution}.
We see from Figure~\ref{fig-ex1-angle-distribution} that all schemes in
Table~\ref{tab-schemes} are able to correctly sample the
distribution~\eqref{ex1-sigma} of the angles $\phi, \theta$ with $N=10^7$
samples (results with $\alpha=0.7$ are very similar and therefore are not shown).  
As expected, Table~\ref{percentage-of-diff-strategy} confirms that sampling
schemes using either PolynomialRoots.jl or HomotopyContinuation.jl Julia packages 
can find multiple solutions of the constraint equation. 
Note that the empirical probability to find more than one states 
is smaller for ``PR50-far'' and ``Hom50-far'' simply because multiple
solutions are sought only every $50$ MCMC iterations.
In Table~\ref{tab-ex1-summary}, 
we show the average jump distance when the system jumps to a new state,
i.e.\ the average value of $|x^{(i+1)}-x^{(i)}|$ among the MCMC iterations such that $x^{(i+1)} \neq x^{(i)}$.
As shown in the column ``Distance'' in Table~\ref{tab-ex1-summary}, 
using multiple proposal states in $\Phi(x,p)$ leads to larger jump distances on average for the Markov chains.
At the same time, we observe that the PolynomialRoots.jl package is slightly
faster than the HomotopyContinuation.jl Julia package, but both packages are
computationally more expensive than Newton's method.
The two (hybrid) schemes ``PR50-far'' and ``Hom50-far'', which use the
PolynomialRoots.jl and HomotopyContinuation.jl packages every $50$ iterations, 
require run-times ($1.8\times 10^{3}$ seconds) similar to the ones for 
``Newton'' ($1.6\times 10^{3}$ seconds), but at the same time also allow the Markov chains to make large (non-local) jumps when multiple solutions are computed.
From Table~\ref{percentage-of-diff-strategy} and Table~\ref{tab-ex1-summary} 
we can also observe that the results computed using both $\alpha=0$ and
$\alpha=0.7$ are very similar for this example (Since the momentum has been
updated twice in Algorithm~\ref{algo-phase-space-mc-rattle}, using
$\alpha=0.7$ is equivalent to performing a single momentum update with a
parameter~$\alpha^2=0.49$; see Remark~\ref{rmk-different-steps-of-algo}). In
fact, the results are similar for a very wide range of values of
$\alpha$, including values up to~$\alpha=0.99$.
(Let us however mention that Algorithm~2 becomes deterministic
for~$\alpha=1.0$, and the convergence to equilibrium is therefore not guaranteed because of the breakdown of ergodicity. We indeed observed in this case that the sampler can be trapped in certain states or fails to sample the entire torus.)

\begin{table}[t!]
  \renewcommand{\tabcolsep}{5pt}
  \centering
  \begin{tabular}{C{0.07\textwidth}|C{0.12\textwidth}|*{4}{R{0.07\textwidth}}|*{4}{R{0.07\textwidth}}}
    \hline
    \multirow{2}{*}{$\alpha$} & \multirow{2}{*}{Scheme} &
    \multicolumn{4}{c|}{\small{No. of solutions in forward step}} &
    \multicolumn{4}{c}{\small{No. of solutions in reversibility check}}  \\
      \cline{3-10}
     & & \multicolumn{1}{c}{$0$} & \multicolumn{1}{c}{$1$} & \multicolumn{1}{c}{$2$} & \multicolumn{1}{c|}{$4$} & \multicolumn{1}{c}{$0$} & \multicolumn{1}{c}{$1$} & \multicolumn{1}{c}{$2$} & \multicolumn{1}{c}{$4$} \\
    \hline
 \multirow{7}{*}{$0.0$}  & Newton& $48.0\%$ & $52.0\%$ & $0.0\%$ & $0.0\%$ & $1.2\%$ & $98.8\%$ & $0.0\%$ & $0.0\%$ \\
    \cline{2-10}
 & PR & $45.9\%$ & $0.0\%$ & $49.9\%$ & $4.2\%$ & $0.0\%$ & $0.0\%$ & $91.2\%$ & $8.8\%$ \\
     & PR-far& $45.9\%$ & $0.0\%$ & $49.9\%$ & $4.2\%$ & $0.0\%$ & $0.0\%$ & $91.3\%$ & $8.7\%$ \\
     & PR50-far& $48.0\%$ & $50.9\%$ & $1.0\%$ & $0.1\%$ & $1.2\%$ & $96.8\%$ & $1.9\%$ & $0.2\%$ \\
    \cline{2-10}
     & Hom & $46.0\%$ & $0.0\%$ & $49.9\%$ & $4.1\%$ & $0.0\%$ & $0.0\%$ & $91.2\%$ & $8.8\%$ \\
     & Hom-far& $45.9\%$ & $0.0\%$ & $49.9\%$ & $4.2\%$ & $0.0\%$ & $0.0\%$ & $91.3\%$ & $8.6\%$ \\
     & Hom50-far & $48.0\%$ & $50.9\%$ & $1.0\%$ & $0.1\%$ & $1.2\%$ & $96.8\%$ & $1.9\%$ & $0.2\%$ \\
     \hline
     \hline
    \multirow{7}{*}{$0.7$} & Newton & $48.0\%$ & $52.0\%$ & $0.0\%$ & $0.0\%$ & $1.2\%$ & $98.8\%$ & $0.0\%$ & $0.0\%$ \\
    \cline{2-10}
     & PR & $45.9\%$ & $0.0\%$ & $49.9\%$ & $4.1\%$ & $0.0\%$ & $0.0\%$ & $91.2\%$ & $8.8\%$ \\
     & PR-far& $45.9\%$ & $0.0\%$ & $49.9\%$ & $4.2\%$ & $0.0\%$ & $0.0\%$ & $91.3\%$ & $8.7\%$ \\
     & PR50-far& $48.0\%$ & $50.9\%$ & $1.0\%$ & $0.1\%$ & $1.1\%$ & $96.8\%$ & $1.9\%$ & $0.2\%$ \\
    \cline{2-10}
     & Hom & $45.9\%$ & $0.0\%$ & $49.9\%$ & $4.1\%$ & $0.0\%$ & $0.0\%$ & $91.2\%$ & $8.8\%$ \\
     & Hom-far& $45.9\%$ & $0.0\%$ & $49.9\%$ & $4.2\%$ & $0.0\%$ & $0.0\%$ & $91.3\%$ & $8.7\%$ \\
     & Hom50-far & $48.0\%$ & $50.9\%$ & $1.0\%$ & $0.1\%$ & $1.2\%$ & $96.8\%$ & $1.9\%$ & $0.2\%$ \\
     \hline
  \end{tabular}
  \caption{Sampling of the uniform law on the torus. The percentages in ``forward step'' are the
  proportions of MCMC iterations among the total number $N=10^7$ of MCMC
  iterations where $i$ state(s) are found in the set $\Phi(x,p)$.
  The percentages in ``reversibility check'' are the proportions of the number
  of MCMC iterations where $i$ state(s) are found in the set $\Phi(x^1, p^{1,-})$
  (where $(x^1, p^{1,-})\in \Phi(x,p)$ is the selected proposal state) for MCMC
  iterations where the reversibility check step has been invoked (i.e.\ when at least one state has been found in $\Phi(x,p)$ in the forward step).
  \label{percentage-of-diff-strategy}}
\end{table}

\begin{table}[t!]
  \renewcommand{\tabcolsep}{7pt}
  \begin{tabular}{c|c|c|c|c|c|c}
    \hline
      $\alpha$ & Scheme &  FSR & BSR & TAR & Distance & Time (s)\\
    \hline
    \multirow{7}{*}{$0.0$} &  Newton & $0.52$ & $0.90$ & $0.45$ & $0.73$ & $1.6\times 10^{3}$ \\
    \cline{2-7}
     & PR & $0.54$ & $1.00$ & $0.44$ & $1.13$ & $5.7 \times 10^{3}$ \\
     & PR-far & $0.54$ & $1.00$ & $0.43$ & $1.18$ & $5.9\times 10^{3}$ \\
     & PR50-far & $0.52$ & $0.90$ & $0.45$ ($0.43$) & $0.74$ ($1.18$) & $1.6\times 10^{3}$ \\
    \cline{2-7}
     & Hom & $0.54$ & $1.00$ & $0.44$ & $1.13$ & $8.4 \times 10^{3}$ \\
     & Hom-far & $0.54$ & $1.00$ & $0.43$ & $1.18$ & $8.9 \times 10^{3}$ \\
     & Hom50-far & $0.52$ & $0.90$ & $0.45$ ($0.43$) & $0.74$ ($1.18$) & $1.8\times 10^{3}$ \\
    \hline
    \hline
    \multirow{7}{*}{$0.7$} &  Newton & $0.52$ & $0.90$ & $0.45$ & $0.73$ & $1.6\times 10^{3}$ \\
    \cline{2-7}
     & PR & $0.54$ & $1.00$ & $0.44$ & $1.13$ & $5.8 \times 10^{3}$ \\
     & PR-far & $0.54$ & $1.00$ & $0.43$ & $1.18$ & $5.4\times 10^{3}$ \\
     & PR50-far & $0.52$ & $0.90$ & $0.45$ ($0.43$) & $0.74$ ($1.18$) & $1.8\times 10^{3}$ \\
    \cline{2-7}
     & Hom & $0.54$ & $1.00$ & $0.44$ & $1.13$ & $8.9 \times 10^{3}$ \\
     & Hom-far & $0.54$ & $1.00$ & $0.43$ & $1.18$ & $8.3 \times 10^{3}$ \\
     & Hom50-far & $0.52$ & $0.90$ & $0.45$ ($0.43$) & $0.74$ ($1.18$) & $1.8\times 10^{3}$ \\
    \hline
  \end{tabular}
  \centering
  \caption{Sampling of the uniform law on the torus. The column ``FSR'' (short for ``forward
  success rate'') shows the ratios of MCMC iterations within $N=10^7$ MCMC
  iterations where at least one state is found in $\Phi(x,p)$ as a proposal state.
  The column ``BSR'' (short for ``backward success rate'') shows the ratios of
  the number of MCMC iterations for which the reversibility check step is
  successful, within the total number of MCMC iterations where the reversibility check step is invoked.  
 The column ``TAR'' (short for ``total acceptance rate'') displays the proportions of MCMC iterations within
  $N=10^7$ iterations in which the system jumps to a new state.
 The column ``Distance'' shows the average jump distance of the position
  component, provided that the system jumps to a new position. For each
  scheme, the run-time (seconds) spent in performing $N=10^7$ MCMC iterations is shown in the column ``Time (s)''.  For the (hybrid) schemes ``PR50-far''
  and ``Hom50-far'', the average jump rates and the average jump distances
  among the MCMC iterations for which the set $\Phi(x,p)$ is computed using either
  the PolynomialRoots.jl or HomotopyContinuation.jl packages (i.e.\
  every $50$ iterations; in total $2\times 10^5$ times) are shown in bracket.
  \label{tab-ex1-summary}}
\end{table}
\begin{figure}[t!]
\includegraphics[width=0.47\textwidth]{./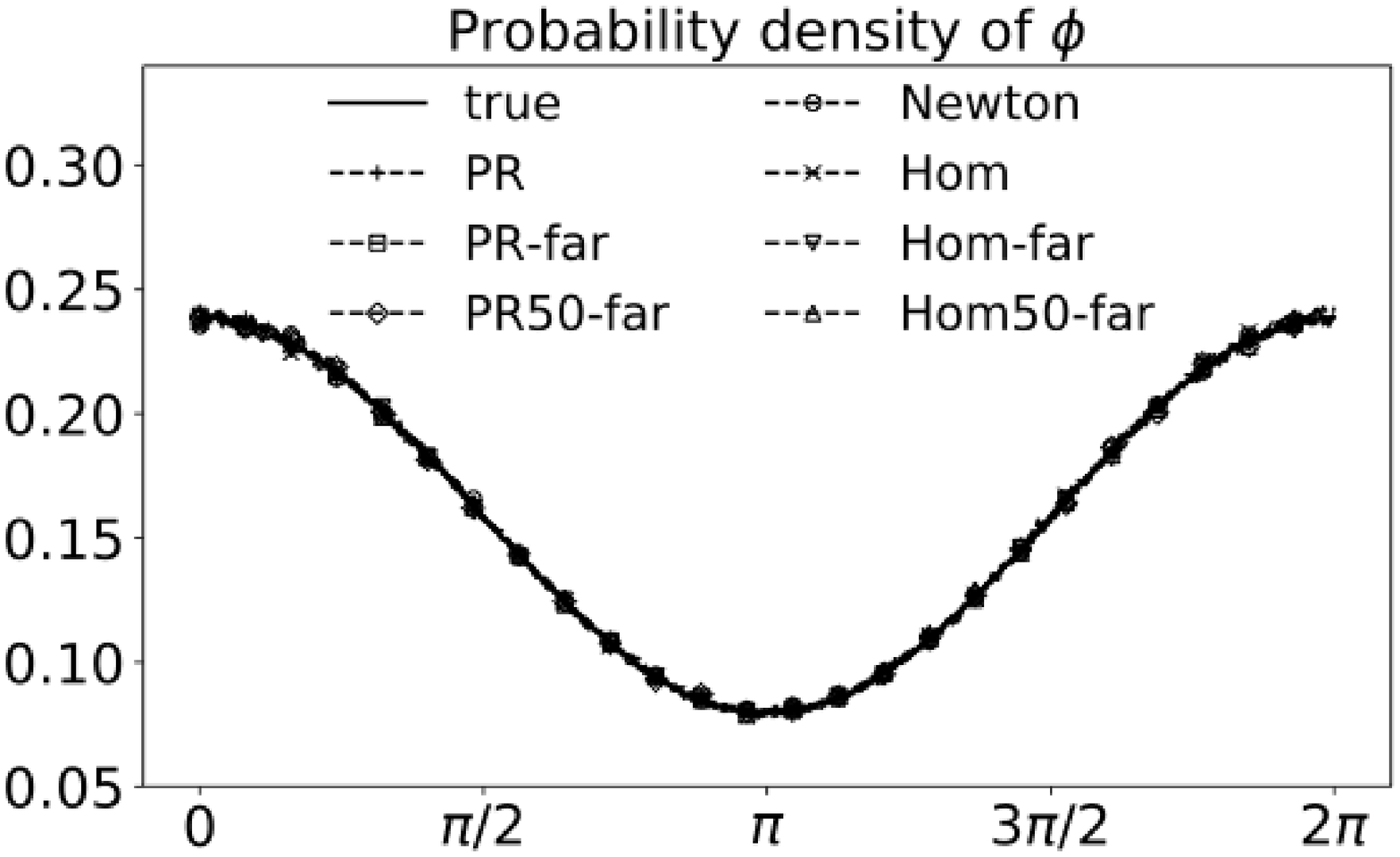}
\includegraphics[width=0.47\textwidth]{./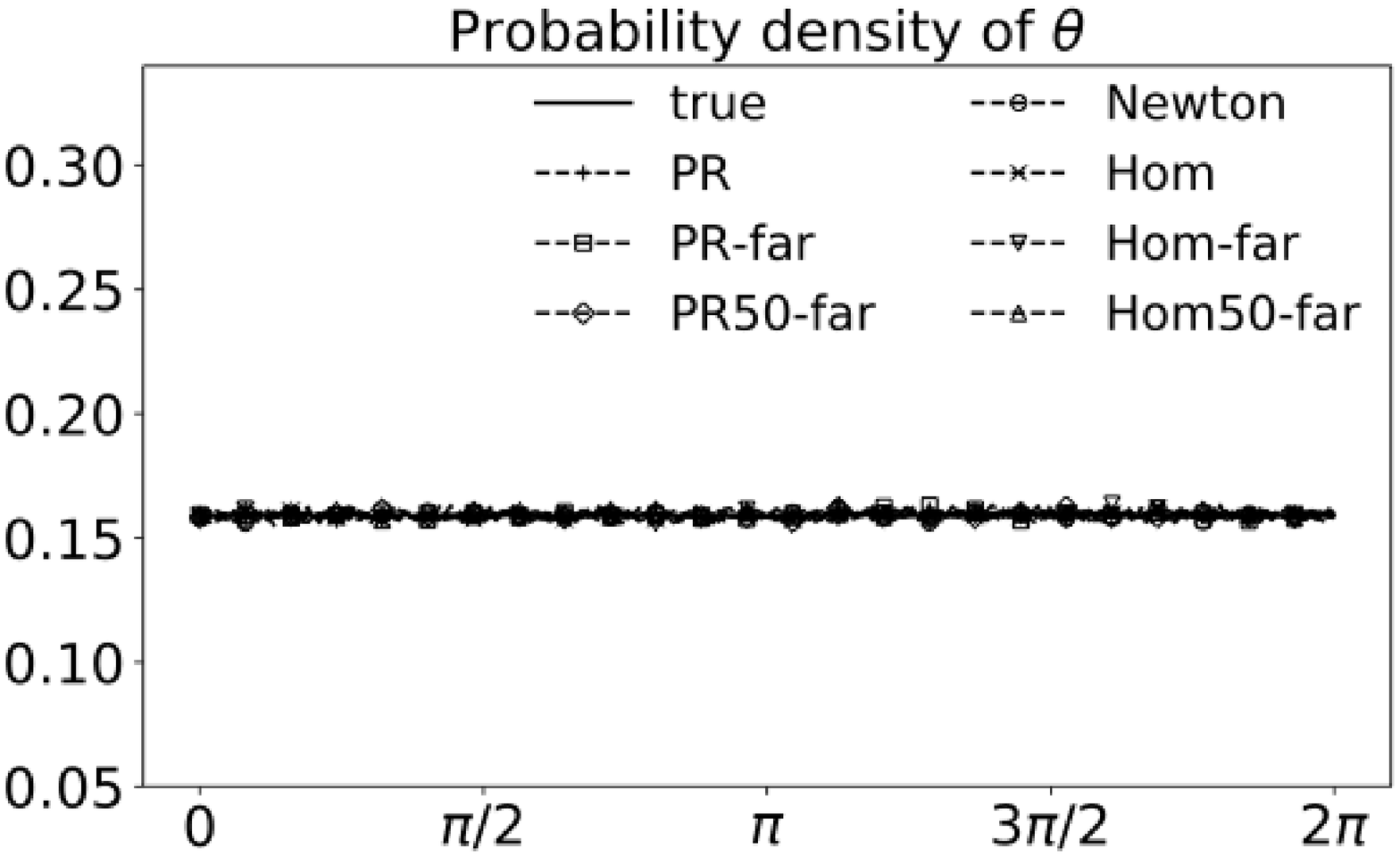}
\centering
  \caption{Sampling of the uniform law on the torus (for $\alpha=0$). Probability density profiles of the two angles $\phi,
  \theta$ in \eqref{ex1-polar}, estimated from $N=10^7$ sampled states using
  different schemes in Table~\ref{tab-schemes} with $\alpha=0$. The curves with label ``true'' are the reference probability density functions computed from \eqref{ex1-sigma}. 
  \label{fig-ex1-angle-distribution}}
\end{figure}

\paragraph{Bimodal distribution on the torus.}
In the second task, we apply Algorithm~\ref{algo-phase-space-mc-rattle}
to sample a bimodal distribution on the torus. The aim is to demonstrate that
using multiple proposal states in $\Phi(x,p)$ can improve the sampling
efficiency of MCMC schemes by introducing non-local jumps in Markov chains.
We consider the target distribution~\eqref{measure-mu-on-m} with $\beta = 20$ and (see Figure~\ref{fig-ex1-torus-potential}, right)
\begin{align}
V(x) = (x_1 - x_2)^2 + 5\left(\frac{x_1^2 + x_2^2}{(R+r)^2} - 1\right)^2\,.
  \label{ex1-mu-pot}
\end{align}
The potential has two global minima on~$\Sigma$ at $\left(\frac{R+r}{\sqrt{2}},
\frac{R+r}{\sqrt{2}}, 0\right)^T$ and $\left(-\frac{R+r}{\sqrt{2}},
-\frac{R+r}{\sqrt{2}}, 0\right)^T$.

We simulate the distributions of the two angles $\phi, \theta$ in \eqref{ex1-polar}
using the schemes ``Newton'', ``PR'', ``PR50-far'', ``Hom'', and ``Hom50-far''
from Table~\ref{tab-schemes} with both $\alpha=0$ and $\alpha=0.7$ (the two schemes ``PR-far'' and ``Hom-far'' are not used because they provide results similar to those of ``PR'' and ``Hom'', respectively).
The step-size in the one-step RATTLE with momentum reversal scheme \eqref{psi-map-by-rattle} is set
to $\tau = 0.8$. The value of the timestep is chosen in order to offer some empirical optimal trade-off in
terms of sampling the equilibrium measure at hand. 
The values of the other parameters are the same as in the first sampling task.
The numerical results are summarized in
Tables~\ref{tab-ex1-percentage-of-diff-strategy-bimode}--\ref{tab-ex1-summary-bimode}
and Figures~\ref{fig-ex1-angle-distribution-bimode}--\ref{fig-ex1-bimodel-trajectories}.
From Figure~\ref{fig-ex1-angle-distribution-bimode}, we observe that, unlike
the schemes ``PR'', ``PR50-far'', ``Hom'' and ``Hom50-far'', the scheme ``Newton''
still could not reproduce the correct density profile of $\theta$ within $N=10^7$ sampled states. Figure~\ref{fig-ex1-bimodel-trajectories} shows that,
in the scheme ``Newton'', the transition of the Markov chain from the basin of one local
minimum to the basin of the other one rarely happens, due to the bimodality of the target distribution,
implying that a larger sample size is needed in order to correctly sample the target
distribution using the ``Newton'' scheme.
On the other hand, by computing multiple proposal states in~$\Phi(x,p)$, the Markov chains in
the schemes ``PR'',  ``PR50-far'', ``Hom'' and ``Hom50-far'' are able to
perform non-local jumps, as shown in
Tables~\ref{tab-ex1-percentage-of-diff-strategy-bimode}--\ref{tab-ex1-summary-bimode}.
To investigate the effect of these non-local jumps, in the column ``Large jump rate'' 
of Table~\ref{tab-ex1-summary-bimode} we record the frequency of MCMC iterations when the first
  component $x_1$ of the state $x$ changes its sign among the total $N=10^7$ iterations. 
This can be used as an indicator for the occurrence of large jumps from one local minimum to the other. We see that
the frequencies of large jumps are higher when
multiple proposal states are computed, leading to better sampling performances
compared to the scheme ``Newton''. In particular, the two hybrid schemes
``PR50-far'' and ``Hom50-far'' achieve better sampling efficiency with
computational cost ($2.0\times
10^{3}$ seconds) similar to the one for the scheme ``Newton'' ($1.7\times 10^{3}$~seconds).
Finally, from Table~\ref{tab-ex1-percentage-of-diff-strategy-bimode} and
Table~\ref{tab-ex1-summary-bimode}, we can again observe that the results
computed using both $\alpha=0$ and $\alpha=0.7$ are very similar for this test example
(More generally, as in the previous test example, the results are in fact similar for a very wide range of values of $\alpha$). 

\begin{table}[t!]
  \renewcommand{\tabcolsep}{4.2pt}
  \begin{tabular}{C{0.05\textwidth}|C{0.12\textwidth}|*{4}{R{0.07\textwidth}}|R{0.06\textwidth}R{0.08\textwidth}R{0.07\textwidth}R{0.07\textwidth}}
    \hline
      \multirow{2}{*}{$\alpha$} & \multirow{2}{*}{Scheme} &
      \multicolumn{4}{c|}{\small{No. of solutions in forward step}} & \multicolumn{4}{c}{\small{No. of solutions in reversibility check}}  \\
      \cline{3-10}
     & & \multicolumn{1}{c}{$0$} & \multicolumn{1}{c}{$1$} &
     \multicolumn{1}{c}{$2$} &
     \multicolumn{1}{c|}{$4$} &
     \multicolumn{1}{c}{$0$} &
     \multicolumn{1}{c}{$1$} &
     \multicolumn{1}{c}{$2$} &
     \multicolumn{1}{c}{$4$} \\
    \hline
     \multirow{5}{*}{$0.0$} & Newton & $2.2\%$ & $97.8\%$ & $0.0\%$ & $0.0\%$
     & $0.0\%$ & $100.0\%$ & $0.0\%$ & $0.0\%$ \\
    \cline{2-10}
     & PR & $2.1\%$ & $0.0\%$ & $51.8\%$& $46.1\%$&$0.0\%$& $0.0\%$ & $75.2\%$ & $24.8\%$ \\
     & PR50-far& $2.2\%$ & $95.8\%$ & $1.0\%$ & $0.9\%$ & $0.0\%$ & $98.0\%$ & $1.6\%$ & $0.5\%$ \\
    \cline{2-10}
     & Hom & $2.1\%$ & $0.0\%$ & $51.8\%$ & $46.1\%$ & $0.0\%$ & $0.0\%$ & $75.2\%$ & $24.8\%$ \\
     & Hom50-far & $2.2\%$ & $95.8\%$ & $1.0\%$ & $0.9\%$ & $0.0\%$ & $98.0\%$ & $1.6\%$ & $0.5\%$ \\
     \hline
     \hline
     \multirow{5}{*}{$0.7$} & Newton & $2.2\%$ & $97.8\%$ & $0.0\%$ & $0.0\%$
     & $0.0\%$ & $100.0\%$ & $0.0\%$ & $0.0\%$ \\
    \cline{2-10}
     & PR & $2.1\%$ & $0.0\%$ & $51.8\%$ & $46.0\%$ & $0.0\%$ & $0.0\%$ & $75.2\%$ & $24.8\%$ \\
     & PR50-far& $2.2\%$ & $95.8\%$ & $1.0\%$ & $0.9\%$ & $0.0\%$ & $98.0\%$ & $1.6\%$ & $0.5\%$ \\
    \cline{2-10}
     & Hom & $2.1\%$ & $0.0\%$ & $51.8\%$ & $46.1\%$ & $0.0\%$ & $0.0\%$ & $75.2\%$ & $24.8\%$ \\
     & Hom50-far & $2.2\%$ & $95.8\%$ & $1.0\%$ & $0.9\%$ & $0.0\%$ & $98.0\%$ & $1.5\%$ & $0.5\%$ \\
     \hline
  \end{tabular}
  \caption{Sampling of the bimodal distribution on the torus. See Table~\ref{percentage-of-diff-strategy} for the meanings of the percentages. The schemes ``PR-far'' and ``Hom-far'' in Table~\ref{tab-schemes} provide similar results, which are not reported here.  \label{tab-ex1-percentage-of-diff-strategy-bimode}
  }
\end{table}

\begin{figure}[t!]
\includegraphics[width=0.47\textwidth]{./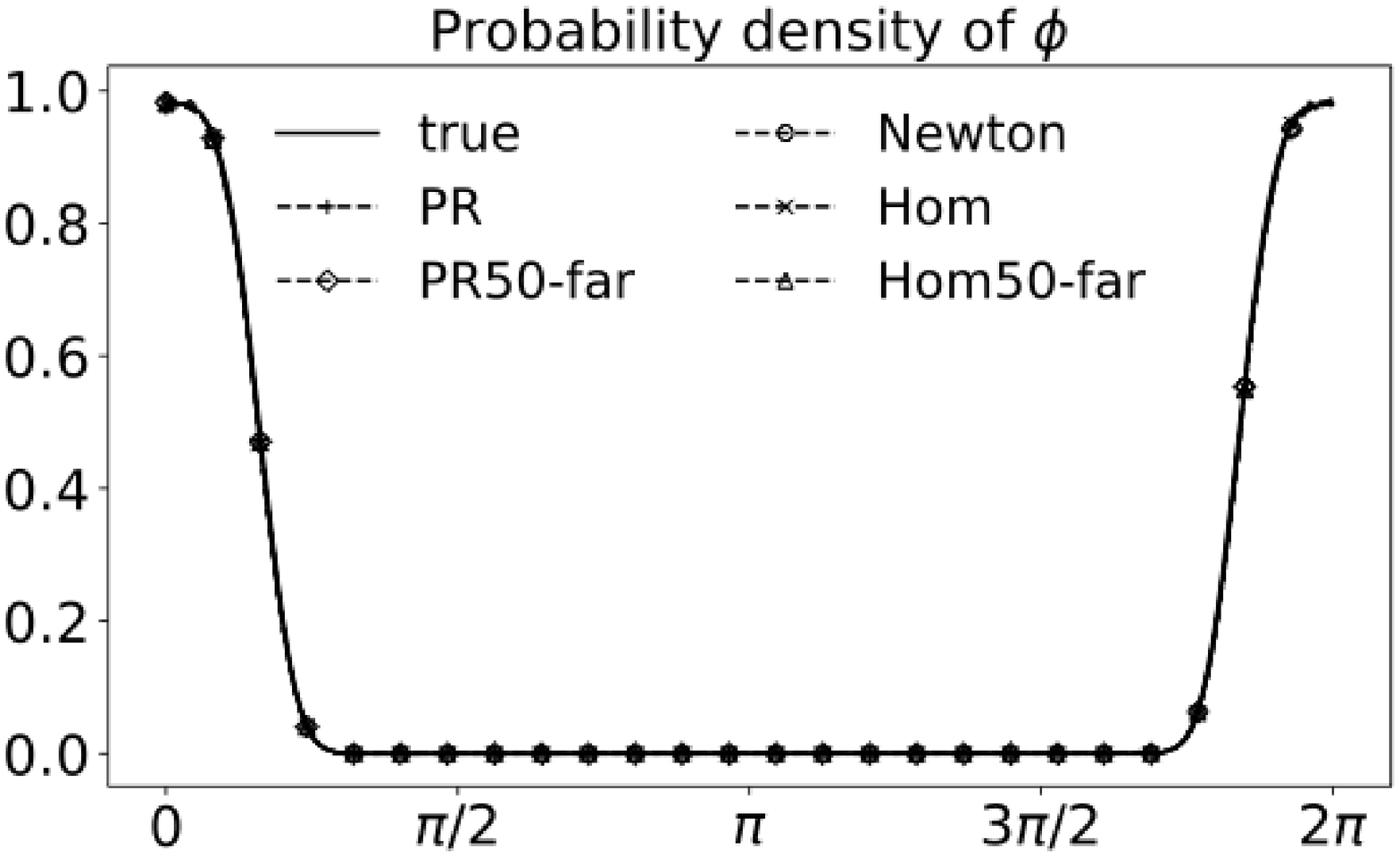}
\includegraphics[width=0.47\textwidth]{./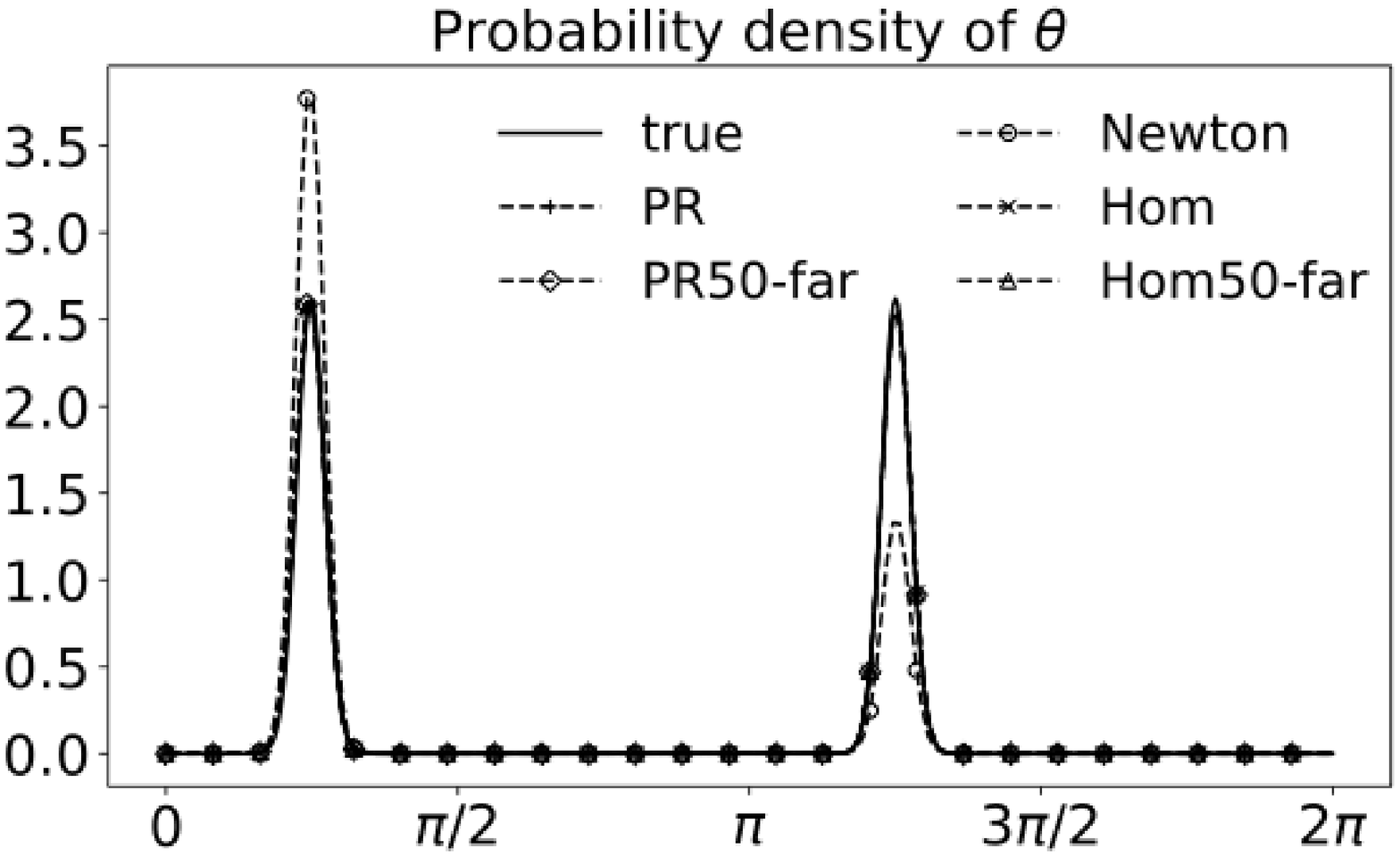}
\centering
  \caption{Sampling of the bimodal distribution on the torus (for $\alpha=0$). Probability
  density profiles of the two angles $\phi, \theta$ in \eqref{ex1-polar},
  estimated from $N=10^7$ sampled states using different schemes from
  Table~\ref{tab-schemes} with $\alpha=0$.
   (The results for both the schemes ``PR-far'' and ``Hom-far'', as well as for $\alpha=0.7$, are similar and therefore not reported.)
   The plots with label ``true'' are the reference probability density functions computed from \eqref{ex1-sigma} and \eqref{ex1-mu-pot}. 
   \label{fig-ex1-angle-distribution-bimode}}
\end{figure}

\begin{table}[t!]
  \renewcommand{\tabcolsep}{7pt}
  \centering
  \begin{tabular}{c|c|c|c|c|c|c}
    \hline
      $\alpha$ & Scheme &  FSR & BSR & TAR & Large jump rate &Time (s)\\
    \hline
     \multirow{5}{*}{$0.0$} & Newton & $0.98$ & $1.00$ & $0.60$ & $2.0 \times 10^{-7}$ & $1.7\times 10^{3}$ \\
    \cline{2-7}
     & PR & $0.98$ & $1.00$ & $0.22$ & $4.0 \times 10^{-3}$ & $7.5\times 10^{3}$ \\
     & PR50-far & $0.98$ & $1.00$ & $0.60$\,($0.17$) & $6.5\times 10^{-5}$ & $1.8\times 10^{3}$ \\
    \cline{2-7}
     & Hom & $0.98$ & $1.00$ & $0.22$ & $4.0 \times 10^{-3}$ & $1.2\times 10^{4}$ \\
     & Hom50-far & $0.98$ & $1.00$ & $0.60$\,($0.18$) & $6.7\times 10^{-5}$ & $2.0\times 10^{3}$ \\
    \hline
    \hline
     \multirow{5}{*}{$0.7$} & Newton & $0.98$ & $1.00$ & $0.60$ & $0.0$ & $1.7\times 10^{3}$ \\
    \cline{2-7}
     & PR & $0.98$ & $1.00$ & $0.22$ & $4.0 \times 10^{-3}$ & $6.9\times 10^{3}$ \\
     & PR50-far & $0.98$ & $1.00$ & $0.60$\,($0.17$) & $6.5\times 10^{-5}$ & $1.9\times 10^{3}$ \\
    \cline{2-7}
     & Hom & $0.98$ & $1.00$ & $0.22$ & $4.0 \times 10^{-3}$ & $9.9\times 10^{3}$ \\
     & Hom50-far & $0.98$ & $1.00$ & $0.60$\,($0.18$) & $6.3\times 10^{-5}$ & $1.9\times 10^{3}$ \\
    \hline
  \end{tabular}
  \caption{Sampling of the bimodal distribution on the torus. See
  Table~\ref{tab-ex1-summary} for the meaning of each column. The column
  ``Large jump rate'' records the frequency of MCMC iterations when the first
  component~$x_1$ of the state $x$ changes its sign among the total $N=10^7$ iterations. 
  \label{tab-ex1-summary-bimode}}
\end{table}
\begin{figure}[t!]
\includegraphics[width=1.05\textwidth]{./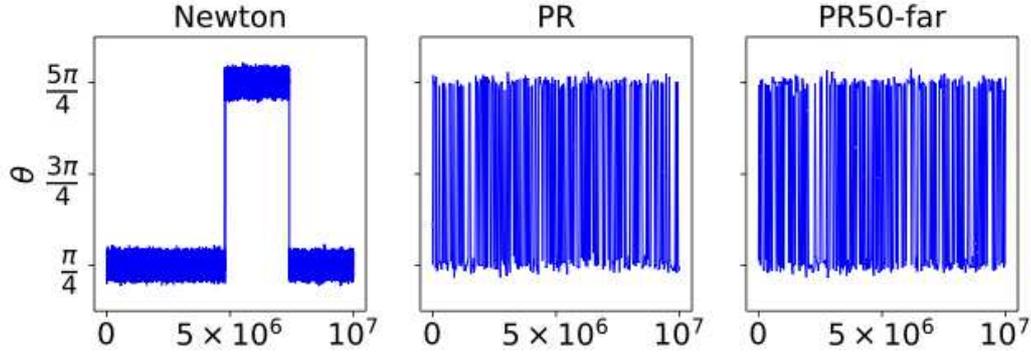}
\centering
  \caption{Sampling of the bimodal distribution on the torus (for $\alpha=0$). Sample trajectories of the angle $\theta$ in
  \eqref{ex1-polar} for the schemes ``Newton'', ``PR'', and
  ``PR50-far'' (see Table~\ref{tab-schemes}) with $\alpha=0$. To have a good visual effect,
  sampled states are plotted every $10$ iterations in ``Newton'', while 
  in both ``PR'' and ``PR50-far'' sampled states are shown every $3000$ iterations.
  The results should be analyzed together with the column ``Large jump rate'' in Table~\ref{tab-ex1-summary-bimode}. Trajectories for the schemes ``Hom'' and ``Hom50-far'' are not reported here because they are very similar to those of ``PR'' and ``PR50-far'',
  respectively.\label{fig-ex1-bimodel-trajectories}}
\end{figure}

\subsection{Disconnected components on a nine-dimensional sphere}
\label{subsec-example-2}

In the second example, we consider the level set
    $\Sigma = \left\{x \in \mathbb{R}^{10}~\middle|~ \xi(x)=0 \in \mathbb{R}^2\right\}$,
where $\xi:\mathbb{R}^{10}\rightarrow \mathbb{R}^2$ is given by 
\begin{align}
  \xi_1(x) = \frac{1}{2}\left(\sum_{i=1}^{10} x_i^2 - 9\right)\,,\qquad \xi_2(x) = x_1x_2x_3 - 2\,, 
  \label{ex2-equation-with-sqrt}
\end{align} 
for $x=(x_1, x_2, \ldots, x_{10})^T\in \mathbb{R}^{10}$.
Note that $\Sigma$ is a $8$-dimensional submanifold in $\mathbb{R}^{10}$ composed of $4$ connected components due to the second constraint $\xi_2(x) =x_1x_2x_3-2=0$.
Let us denote these $4$ connected components by  
\begin{align}
  \begin{split}
  \mathcal{C}_0 & = \left\{x\in \Sigma\,|\,x_1>0,\, x_2>0,\, x_3 > 0\right\},\,\\
  \mathcal{C}_1 & = \left\{x\in \Sigma\,|\,x_1 > 0,\, x_2 < 0,\, x_3 < 0\right\},\,\\
    \mathcal{C}_2 & = \left\{x\in \Sigma\,|\,x_1 < 0,\, x_2 > 0,\, x_3 < 0\right\},\,\\
    \mathcal{C}_3 & = \left\{x\in \Sigma\,|\,x_1 < 0,\, x_2 < 0,\, x_3 > 0\right\}.
  \end{split}
  \label{ex2-c1-to-c4}
\end{align}

We apply Algorithm~\ref{algo-phase-space-mc-rattle} with $\alpha = 0$ to sample the probability measure~\eqref{measure-mu-on-m} with $\beta=1.0$ and $V(x) = \frac{1}{2}(x_1 - 0.6)^2$. The following three schemes are used in the following numerical experiments:  
\begin{itemize}
  \item
  ``Newton'', where Newton's method is used to compute the set $\Phi(x,p)$ at each MCMC iteration. 
  \item
   ``Hom'', where the HomotopyContinuation.jl Julia package is used at each iteration to compute the set $\Phi(x,p)$, and one state is randomly chosen with uniform probability. 
 \item
   ``Hom10'', where the set $\Phi(x,p)$ is computed using  the
    HomotopyContinuation.jl Julia package every $10$ MCMC iterations, while
    Newton's method is used for the other iterations. As in the previous item, when multiple states in $\Phi(x,p)$ are obtained, one state is randomly chosen with uniform probability.
\end{itemize}

In contrast to the example of Section~\ref{subsec-ex1}, the PolynomialRoots.jl Julia package cannot be used since~$\xi$ is vector-valued.
Also, the scheme ``Hom10'' is used instead of ``Hom50'' (in the previous example),
because we observe that it is important to compute multiple solutions more frequently in order to sample the various components of $\Sigma$.
In each simulation, $N=10^{7}$ MCMC iterations are performed using Algorithm~\ref{algo-phase-space-mc-rattle}, with step-size $\tau = 0.5$.

As shown in Table~\ref{tab-ex2-percentage-of-diff-strategy}, multiple proposal
states are found in the MCMC iterations for both the ``Hom'' and ``Hom10'' schemes.
The empirical density distributions of the state's components $x_1$, $x_2$, and
$x_4$ under the target distribution $\nu_{\Sigma}$ are shown in Figure~\ref{fig-ex2-hist-x1-x2-x4}.
(Due to the symmetry of $\Sigma$ and the choice of potential $V$, $x_3$ has
the same distribution as $x_2$, while $x_5, x_6, \ldots, x_{10}$ have the same
distribution as $x_4$.)
While the same distributions of $x_1$ and $x_4$ are obtained using all three
schemes, we observe that the distribution of $x_2$ provided by the ``Newton''
scheme is different from the results given by the schemes ``Hom'' and ``Hom10''. The distribution of~$x_2$ with "Newton" is clearly far from the ground truth since it is not even.

To have a better understanding of the performance of the different schemes,
let us investigate the sampling of the four connected components $\mathcal{C}_0, \mathcal{C}_1, \mathcal{C}_2, \mathcal{C}_3$ in each scheme. 
First of all, it is easy to see that $\mathcal{C}_0$ and $\mathcal{C}_1$ have the same probability under $\nu_{\Sigma}$, 
while $\mathcal{C}_2$ and $\mathcal{C}_3$ also have the same (smaller than~$\mathcal{C}_0$ and $\mathcal{C}_1$) probability under $\nu_{\Sigma}$.
With this observation, from  the last four columns of Table~\ref{tab-ex2-summary}, we can already conclude that while both the ``Hom'' and
``Hom10'' schemes provide reasonable probabilities, the probabilities estimated
using the ``Newton'' scheme are inaccurate. This indicates that the sample size $N=10^7$ is still not
enough for the ``Newton'' scheme to correctly estimate the probability of each connected component. 
In Figure~\ref{fig-ex2-traj-phase}, the transitions among the components $\mathcal{C}_i$ for $0 \le i \le 3$ are shown for the three different schemes.
We see that, in the ``Newton'' scheme, the state of the Markov chain stays within the same connected component most of the time and the change from one
component to another happens very rarely (in total $18$ times in $10^7$ iterations). This implies that Newton's method mostly proposes moves to states within a connected component, which is indeed expected.
On the other hand, from Figure~\ref{fig-ex2-traj-phase} we observe that the
transitions among the connected components occur frequently both for ``Hom'' and ``Hom10''.
To provide more details, we plot in Figure~\ref{fig-ex2-phase-change-graphs}
the transitions among the connected components as directed graphs, whose nodes and edges represent the four components~$\mathcal{C}_i$ and the transitions among the components, respectively. The frequency of the transition from one connected component $\mathcal{C}_i$
to another component $\mathcal{C}_j$ for $0 \le i\neq j \le 3$, is shown on the edge from node $i$ to node~$j$.
The total frequency of transitions in each scheme is also given in the column ``CTF'' of Table~\ref{tab-ex2-summary}.
These results show that finding multiple proposal states in
Algorithm~\ref{algo-phase-space-mc-rattle} is helpful, particularly when the
submanifold $\Sigma$ contains multiple connected components. Notice moreover that the
hybrid scheme ``Hom10'' achieves a much better sampling efficiency than the
``Newton'' scheme for a comparable computational cost
(Table~\ref{tab-ex2-summary}). Finally, let us point out that in principle one
could also compute multiple proposal states using Newton's method with
different initial guesses (see Remark~\ref{rmk-initial-guesses}). We leave the
study of this approach to future work, since a careful choice of the initial
guesses is required in order to effectively find multiple solutions and some tuning is probably needed depending on the concrete applications at hand.

\begin{table}[hptb]
  \renewcommand{\tabcolsep}{5pt}
  \begin{tabular}{c|rrrrr|rrrrr}
    \hline
      \multirow{2}{*}{Scheme} & \multicolumn{5}{c|}{No. of solutions in forward step} & \multicolumn{5}{c}{No. of solutions in reversibility check}  \\
      \cline{2-11}
     & \multicolumn{1}{c}{$0$} & 
     \multicolumn{1}{c}{$1$} & 
  \multicolumn{1}{c}{$2$} & 
  \multicolumn{1}{c}{$4$} & 
  \multicolumn{1}{c|}{$6$} & 
\multicolumn{1}{c}{$0$} & 
\multicolumn{1}{c}{$1$} & 
\multicolumn{1}{c}{$2$} & 
\multicolumn{1}{c}{$4$} & 
\multicolumn{1}{c}{$6$} \\
    \hline
     Newton & $15.9\%$ & $84.1\%$ & $0.0\%$ & $0.0\%$ & $0.0\%$ & $0.1\%$ & $99.9\%$ & $0.0\%$ & $0.0\%$ & $0.0\%$ \\
     Hom & $13.3\%$ & $0.0\%$ & $76.6\%$ & $9.8\%$ & $0.2\%$ & $0.0\%$ & $0.0\%$ & $84.7\%$ & $15.2\%$ & $0.1\%$ \\
     Hom10 & $15.7\%$ & $75.7\%$ & $7.7\%$ & $1.0\%$ & $0.0\%$ & $0.1\%$ & $89.6\%$ & $8.7\%$ & $1.6\%$ & $0.0\%$ \\
     \hline
  \end{tabular}
  \caption{Sampling the submanifold of the 9D sphere. See Table~\ref{percentage-of-diff-strategy} for the meaning of the percentages. It is possible to find $3$ or $5$ states in ``Hom'' and ``Hom10'', but their percentages are below $0.1\%$.
    \label{tab-ex2-percentage-of-diff-strategy} }
\end{table}

\begin{table}[hptb]
  \renewcommand{\tabcolsep}{6pt}
  \centering
  \begin{tabular}{c|c|c|c|c|c|cccc}
    \hline
    \multirow{2}{*}{Scheme} & \multirow{2}{*}{FSR} & 
    \multirow{2}{*}{BSR} & 
    \multirow{2}{*}{Jump rate} & \multirow{2}{*}{CTF} & \multirow{2}{*}{Time
    (s)} & \multicolumn{4}{c}{Prob. in each component} \\
      \cline{7-10}
    & & & & & & $\mathcal{C}_0$ & $\mathcal{C}_1$ & $\mathcal{C}_2$ & $\mathcal{C}_3$ \\
    \hline
     Newton & $0.84$ & $1.00$ & $0.76$ & $1.8\times 10^{-6}$ & $2.4\times
     10^{3}$ & $0.44$ & $0.37$ & $0.15$ & $0.03$\\
     Hom & $0.87$ & $1.00$ & $0.43$ & $9.4\times 10^{-3}$ & $2.5\times 10^{4}$ & $0.40$ & $0.39$ & $0.11$ & $0.10$\\
     Hom10 & $0.84$ & $1.00$ & $0.73$ & $9.3\times 10^{-4}$ & $4.8\times 10^{3}$ & $0.39$ & $0.39$ & $0.10$ & $0.11$\\
    \hline
  \end{tabular}
  \caption{Sampling the submanifold of the 9D sphere. 
  The column ``CTF'' (short for ``component transition frequency'') shows the
  frequency at which the Markov chain jumps from one component to another in
  $N=10^{7}$ MCMC iterations. The empirical probabilities of visits to a component along a trajectory
  are shown in the last four columns. See
  Table~\ref{tab-ex1-summary} for the meaning of the other columns
  (i.e.\ ``FSR'', ``BSR'', ``Jump rate'', and ``Time (s)'').
  \label{tab-ex2-summary}}
\end{table}
\begin{figure}[htp]
\includegraphics[width=0.95\textwidth]{./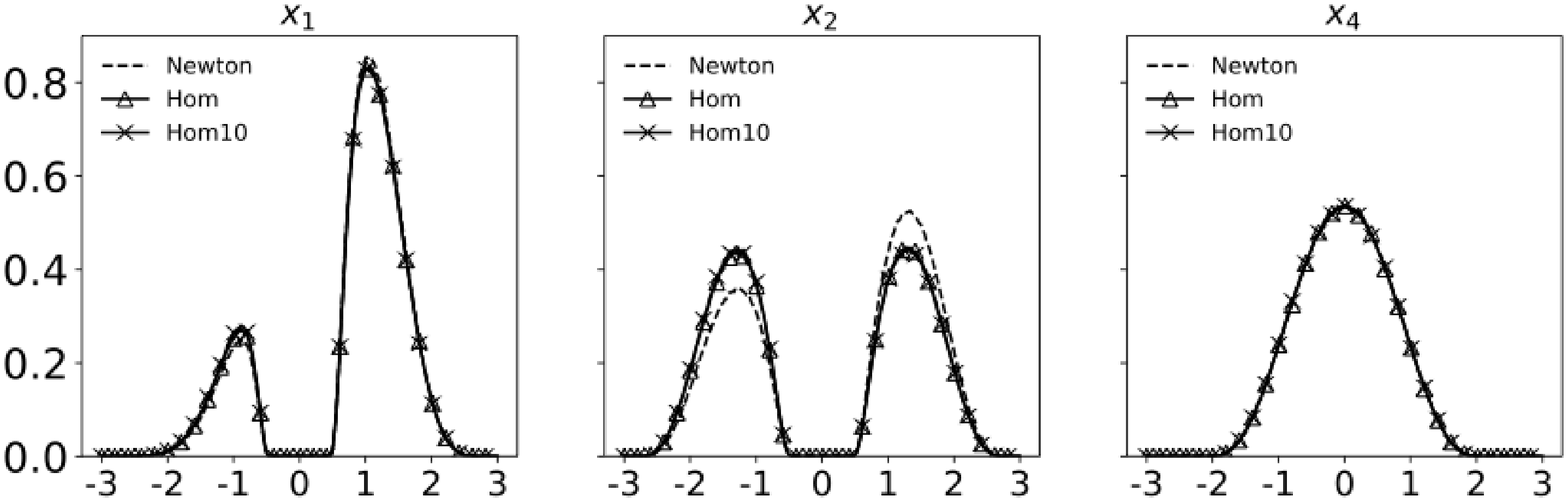}
\centering
  \caption{Sampling the submanifold of the 9D sphere: empirical probability densities of $x_1$, $x_2$ and $x_4$.  \label{fig-ex2-hist-x1-x2-x4}}
\end{figure}
\begin{figure}[htp]
\includegraphics[width=0.95\textwidth]{./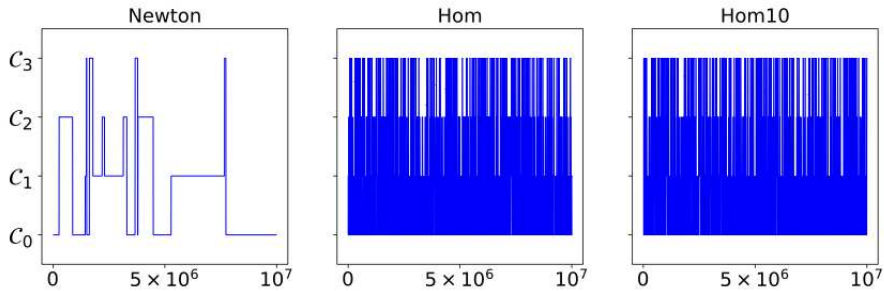}
\centering
  \caption{Sampling the submanifold of the 9D sphere: visits along the trajectories of the connected components $\mathcal{C}_i$ in \eqref{ex2-c1-to-c4} for $0\le i \le 3$, as functions of the iteration index, using the schemes ``Newton'', 
  ``Hom'', and ``Hom10''. While data for all MCMC iterations are plotted for the ``Newton'' scheme,
  data are shown only every $5000$ iterations for both ``Hom'' and ``Hom10'', in order to have a good visual effect.  \label{fig-ex2-traj-phase}}
\end{figure}
\begin{figure}[htp]
\includegraphics[width=0.95\textwidth]{./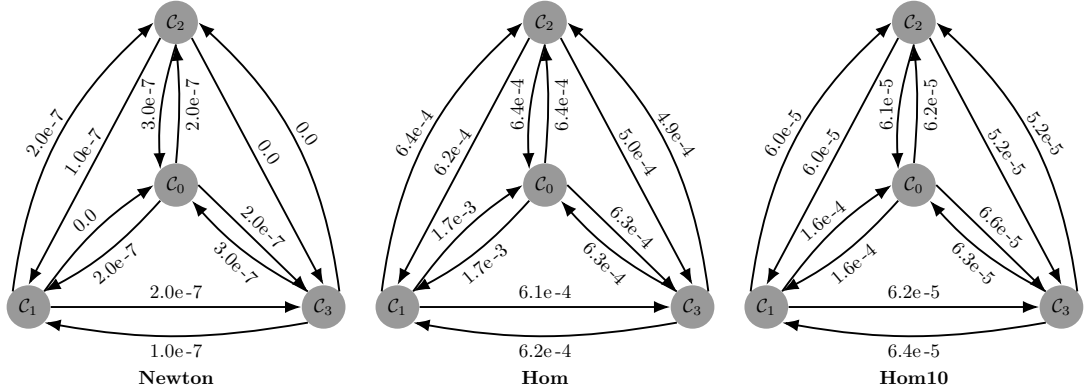}
\centering
  \caption{Sampling the submanifold of the 9D sphere:
    graph representations of the transitions among the connected components $\mathcal{C}_i$ for $0 \le i \le 3$. The frequencies of the transitions from one connected component~$\mathcal{C}_i$ to another component~$\mathcal{C}_j$ are shown on the edge from node~$i$ to~$j$. The reported frequencies are empirical frequencies obtained on a single trajectory of~$N=10^7$ iterations. \label{fig-ex2-phase-change-graphs}}
\end{figure}

\section{Proofs of the results presented in Section~\ref{sec-two-algorithms}}
\label{sec-proofs}

\subsection{Proofs of the results of Section~\ref{subsec-mama-sigma}}
\label{sec-proofs-MALA}

We prove Propositions~\ref{prop-differentiability-mala} and~\ref{prop-set-f-mala}, and then Theorem~\ref{thm-mala-on-sigma}.

\begin{proof}[Proof of Proposition~\ref{prop-differentiability-mala}]
  Concerning the first item, the map $G_x$ in \eqref{v-by-y} is clearly $C^1$-differentiable. Using the
  fact that, for all $x\in \Sigma$, the $k$ columns of $\nabla\xi(x)$ concatenated with the $d-k$ columns of $U_x$ form a basis in $\mathbb{R}^d$ such that $\nabla\xi(x)^TU_x=0$, we have, for all $x, y \in \Sigma$, 
  \begin{align}
    \begin{split}
     \det\left(\nabla\xi(y)^T \nabla\xi(x)\right)=0~ \Longleftrightarrow&~ \exists\, c\in \mathbb{R}^{k} \backslash \{ 0 \},\quad \nabla\xi(y)^T \nabla\xi(x)c = 0\,, \\
    \Longleftrightarrow&~ \exists\, (c,v) \in \left( \mathbb{R}^{k}\backslash \{ 0 \} \right) \times \left( \mathbb{R}^{d-k} \backslash \{ 0 \}\right), \quad \nabla\xi(x)c = U_y v\,,\\
    \Longleftrightarrow&~ \exists\, v\in \mathbb{R}^{d-k} \backslash \{ 0 \}, \quad U_x^TU_yv =0\,, \\
    \Longleftrightarrow&~ \det\left(U_x^TU_y\right)=0\,,
    \end{split}
    \label{equivalent-definitions-of-cx}
  \end{align}
  which implies that 
  \begin{align} 
    C_x = \left\{y\in\Sigma\,\middle|\, \det\left(\nabla\xi(y)^T \nabla\xi(x)\right)=0\right\} = \left\{y\in\Sigma\,\middle|\, \det\left(U_x^TU_y\right)=0\right\}\,,
    \label{cx-set-mala}
  \end{align}
  where the rightmost set in \eqref{cx-set-mala} is the set of critical points of $G_x$.
  Therefore, the differential of~$G_x$ has full rank at $y\in \Sigma\setminus
  C_x$ and, as a result,  there exists a neighborhood $\mathcal{Q}$ of
  $y$ such that $G_x|_{\mathcal{Q}}$ is a $C^1$-diffeomorphism.
  Since $G_x((G_x|_{\mathcal{Q}})^{-1}(\bar{v}))=\bar{v}$ for $\bar{v} \in
  G_x(\mathcal{Q})$, we have $(G_x|_{\mathcal{Q}})^{-1}(\bar{v}) \in \mathcal{F}_x(\bar{v})$ from~\eqref{relation-between-y-and-v}.

  For the second item, it is clear that the function $c(\cdot)$ in \eqref{multiplier-c-explicit} is $C^1$-differentiable. 
Substituting~\eqref{multiplier-c-explicit} in the definition of $F_x$ in
  \eqref{fx-mala}, using the fact that the column vectors of $\nabla\xi(x)$ and $U_x$ form a basis of $\mathbb{R}^d$ with
  $\nabla\xi(x)^TU_x=0$ and $U_x^TU_x=I_{d-k}$, and using the conclusion of the first item, 
   it is also straightforward to verify that $y = F_x(v,c(v))$ and
   $F_x(\bar{v}, c(\bar{v})) = (G_x|_{\mathcal{Q}})^{-1}(\bar{v}) \in
   \mathcal{Q}\cap \mathcal{F}_x(\bar{v})$ for all $\bar{v} \in G_x(\mathcal{Q})$. 
  Now, assume that there exist a neighborhood $\mathcal{O}$ of~$v$ and a $C^1$-differentiable function $\widetilde{c}: \mathcal{O}\rightarrow \mathbb{R}^k$ such that $y=F_x(v,\widetilde{c}(v))$ and $F_x(\bar{v}, \widetilde{c}(\bar{v})) \in \mathcal{Q}\cap \mathcal{F}_x(\bar{v})$ for all $\bar{v} \in \mathcal{O}$. 
       Clearly, we have $G_x(F_x(\bar{v}, \widetilde{c}(\bar{v}))) = \bar{v}$ for all $\bar{v}\in
       \mathcal{O}\cap G_x(\mathcal{Q})$ (see
       \eqref{relation-between-y-and-v}), which implies that $F_x(\bar{v}, \widetilde{c}(\bar{v})) = (G_x|_{\mathcal{Q}})^{-1}(\bar{v})$, since $G_x|_{\mathcal{Q}}$ is a $C^1$-diffeomorphism and 
       $F_x(\bar{v}, \widetilde{c}(\bar{v})) \in \mathcal{Q}$. Using the
       definition of $F_x$ in \eqref{fx-mala} we can directly compute $\widetilde{c}$ and obtain 
       \begin{align*}
	 \widetilde{c}(\bar{v}) =
	 \left(\nabla\xi(x)^T\nabla\xi(x)\right)^{-1}\nabla\xi(x)^T\left((G_x|_{\mathcal{Q}})^{-1}(\bar{v})
	 - x + \tau \nabla\overline{V}(x)\right)\,,  \quad \forall \bar{v}\in \mathcal{O}\cap
	 G_x(\mathcal{Q})\,.
       \end{align*}
       Therefore, $\widetilde{c}(\cdot)$ coincides with $c(\cdot)$ on $\mathcal{O}\cap G_x(\mathcal{Q})$.
 This proves the assertion in the second item.
\end{proof}

\medskip

\begin{proof}[Proof of Proposition~\ref{prop-set-f-mala}]
  We first show that $\mathcal{N}_x$ is closed and has zero Lebesgue measure. 
  Let $v_1, v_2, \ldots$ be an infinite sequence in $\mathcal{N}_x$ such that
  $\lim_{i\rightarrow +\infty} v_i = v \in \mathbb{R}^{d-k}$.
  Since $\mathcal{N}_x = G_x(C_x)$, there are $y^{(1)}, y^{(2)},\ldots \in C_x\subset \Sigma$,
   such that $v_i=G_x(y^{(i)})$, where $i=1,2,\dots$.
  Since $\Sigma$ is compact (Assumption~\ref{assump-xi}), we can assume (upon extracting a subsequence) that $\lim_{i\rightarrow +\infty} y^{(i)} = y \in
  \Sigma$. Moreover, we actually have $y\in C_x$ since the set $C_x$ in \eqref{c-x-in-prop1} is clearly a closed set.
  Using the continuity of the map $G_x$ in \eqref{v-by-y}, we can deduce that $v= \lim_{i\rightarrow +\infty} v_i = \lim_{i\rightarrow +\infty} G_x(y^{(i)}) = G_x(y) \in \mathcal{N}_x$. 
  This shows that $\mathcal{N}_x$ is a closed subset of $\mathbb{R}^{d-k}$.   
  Furthermore, \eqref{cx-set-mala} implies that $C_x$ is in fact the set of critical points of $G_x$.
  As a result, Sard's theorem asserts that~$\mathcal{N}_x = G_x(C_x)$ is a
  subset of $\mathbb{R}^{d-k}$ with Lebesgue measure zero. This proves the assertion in the first item.

  We next show that $\mathcal{F}_x(v)$ is a finite set for any~$v\in
  \mathbb{R}^{d-k}\setminus \mathcal{N}_x$. Assume by contradiction that there
  exists an element~$v\in \mathbb{R}^{d-k}\setminus\mathcal{N}_x$ for which
  this is not true. Then there are infinitely many Lagrange multipliers $c_1,
  c_2, \ldots \in \mathbb{R}^{k}$, which are different from each other, such
  that $F_x(v, c_i)=y^{(i)} \in \mathcal{F}_x(v)\subset\Sigma$. Since $\Sigma$ is compact, we can assume (upon extracting a subsequence) that $\lim_{i\rightarrow +\infty} y^{(i)} = y \in \Sigma$. 
  Using the definition of $F_x$ in \eqref{fx-mala}, we find $y^{(i)} - y^{(j)} = \nabla\xi(x)(c_i-c_j)$. Using the fact that $\nabla\xi(x)$ has full rank by~Assumption~\ref{assump-xi}, it holds
  \[
    c_i-c_j = \left(\nabla\xi(x)^T\nabla\xi(x)\right)^{-1}\nabla\xi(x)^T\left(y^{(i)}-y^{(j)}\right)\,,
  \]
  and so there exists $c \in \mathbb{R}^k$ such that $\lim_{i\rightarrow
  +\infty} c_i = c$ (since $(c_i)_{i \geq 1}$ is a Cauchy sequence) and $F_x(v,c) = y$. 
  It follows from \eqref{relation-between-y-and-v} that $G_x(y)=v$ and we have $y\not\in C_x$ since $v\in \mathbb{R}^{d-k}\setminus\mathcal{N}_x$.
  Therefore, by the first item of
  Proposition~\ref{prop-differentiability-mala}, we can find a neighborhood $\mathcal{Q}$ of $y$ such that $G_x: \mathcal{Q}\rightarrow
  G_x(\mathcal{Q})$ is a $C^1$-diffeomorphism. However, this leads to a contradiction with
  the assumption that $G_x(y^{(i)})=v$ and $\lim_{i\rightarrow +\infty}
  y^{(i)}=y$, and so, the set $\mathcal{F}_x(v)$ contains at most a finite number of elements.
This proves the assertion in the second item.

 Concerning the third item, by the first item of Proposition~\ref{prop-differentiability-mala}, we can find for $1 \le i \le n$ a neighborhood $\mathcal{Q}_i$ of $y^{(i)}$ such that $G_x|_{\mathcal{Q}_i}$ is a $C^1$-diffeomorphism. 
  Since $y^{(i)}$ for $1 \le i \le n$ are $n$ different states, by shrinking the
  neighborhoods $\mathcal{Q}_i$ if necessary, we can assume without loss of generality that $\mathcal{Q}_i\cap\mathcal{Q}_{i'}=\emptyset$, for 
  $1 \le i \neq i'\le n$. 
  Then, the second item of Proposition~\ref{prop-differentiability-mala} already implies that there are
  $n$ different Lagrange multiplier functions~$c^{(i)}$, locally given by
  \eqref{multiplier-c-explicit} with $\mathcal{Q}=\mathcal{Q}_i$, and the set
  $\mathcal{F}_x(\bar{v})$ has at least $n$ elements (since 
  $F_x(\bar{v}, c^{(i)}(\bar{v}))\in \mathcal{Q}_i \cap \mathcal{F}_x(\bar{v})$ and $\mathcal{Q}_i\cap \mathcal{Q}_{i'}=\emptyset$ for $1 \le i \neq i'\le n$)
  for $\bar{v}$ in the neighborhood $\mathcal{O}=\cap_{i=1}^n G_x(\mathcal{Q}_i)$ of $v$. Suppose that
  $\mathcal{F}_x(\bar{v})$ has $n+1$ elements or more for vectors $\bar{v} \in \mathcal{O}$ arbitrarily close to~$v$. This means that
 we can find a sequence of configurations $\left(\bar{y}^{(j)}\right)_{j \geq 1} \subset \Sigma$ such that 
  $\bar{y}^{(j)}\in \mathcal{F}_x(v_j)\setminus\{F_x(v_j,
  c^{(1)}(v_j)),\ldots, F_x(v_j, c^{(n)}(v_j))\}$, where $v_j\in \mathcal{O}$ with $v_j \rightarrow v$ as $j\rightarrow +\infty$. Then, we have $G_x(\bar{y}^{(j)}) = v_j$ and $\bar{y}^{(j)}\not\in\cup_{i=1}^n (G_x|_{\mathcal{Q}_i})^{-1}(\mathcal{O})$ (since, for each $1 \le i\le n$, $F_x(v_j, c^{(i)}(v_j))\in
  (G_x|_{\mathcal{Q}_i})^{-1}(\mathcal{O})\subseteq \mathcal{Q}_i$,
  $G_x(F_x(v_j, c^{(i)}(v_j)))=v_j$ and $G_x|_{\mathcal{Q}_i}$ is a diffeomorphism). It implies that $G_x(y)=v$, or equivalently $y\in \mathcal{F}_x(v)$ (see \eqref{relation-between-y-and-v}), where $y\not\in \cup_{i=1}^n (G_x|_{\mathcal{Q}_i})^{-1}(\mathcal{O})$ is a limiting point of $\bar{y}^{(j)}$ on $\Sigma$ (obtained after a possible extraction). This is in contradiction with the fact that
  $|\mathcal{F}_x(v)|=n$ and so, there exists a neighborhood $\mathcal{O}' \subset \mathcal{O}$ of $v$ such that $\mathcal{F}_x(\bar{v})$ has exactly $n$ values for any~$\bar{v}$ in~$\mathcal{O}'$.

  Finally, concerning the fourth item, we first note that $\mathcal{N}_x\cap
  \mathcal{B}_{x,0}=\emptyset$ (indeed, if $v \in \mathcal{N}_x$, then there
  exists $y \in C_x$ such that $v=G_x(y)$, or equivalently $y\in \mathcal{F}_x(v)$ by \eqref{relation-between-y-and-v},
  so that $|\mathcal{F}_x(v)| \geq 1$, which implies that $v \not \in \mathcal{B}_{x,0}$). Since $\mathcal{N}_x\cap \mathcal{B}_{x,i}=\emptyset$ by definition for $i \geq 1$, we can conclude that $\mathcal{N}_x$, $\mathcal{B}_{x,0}$, $\mathcal{B}_{x,1}, \ldots$ are disjoint subsets which form a partition of~$\mathbb{R}^{d-k}$. 
 The openness of the sets $\mathcal{B}_{x,i}$ for $i \ge 1$ follows from the third item. 
 The fact that $\mathcal{B}_{x,0}$ is open can easily be shown by proving that $\mathbb{R}^{d-k} \backslash
 \mathcal{B}_{x,0}$ is closed. Consider to this end $v \in \mathbb{R}^{d-k}$ and assume that there is a
 sequence $(v_j)_{j \ge 1} \subset \mathbb{R}^{d-k} \backslash
 \mathcal{B}_{x,0}$ such that $v_j \to v$ as $j \rightarrow +\infty$; in
 particular, $v_j = G_x(y_j)$ for some~$y_j \in \Sigma$. Then, up to
 extraction, $y_j \to y$ so that $v = G_x(y)$ and $|\mathcal{F}_x(v)| \geq 1$, and so $v \in \mathbb{R}^{d-k} \backslash\mathcal{B}_{x,0}$.
\end{proof}

\medskip

\begin{proof}[Proof of Theorem~\ref{thm-mala-on-sigma}]
  This proof is inspired by~\cite{goodman-submanifold}.
  For $x\in \Sigma$, we denote by $\widetilde{q}(x, \cdot)$ and $q(x, \cdot)$
  the distribution of the proposal state $\widetilde{x}^{(i+1)}\in\Sigma$ and
  the transition probability kernel of the Markov chain generated by
  Algorithm~\ref{algo-mala-on-sigma}, respectively. Let us show that $q$ satisfies 
  the equality~\eqref{usual-detailed-balance-condition-integral}
  in Definition~\ref{def-reversible}.
  Recall the set $\mathcal{D}$ defined in \eqref{admissible-set-two-steps} and
  the acceptance probability~$a(y\,|\,x)$ defined in~\eqref{rate-for-mala-on-sigma}. According to Algorithm~\ref{algo-mala-on-sigma}, $q$ is given by
  \begin{align}
    q(x,dy) = a(y\,|\,x)\, \mathbf{1}_{\mathcal{D}}(x,y)\, \widetilde{q}(x,dy)
    + \left[1-\int_{\Sigma}\, a(y'\,|\,x)\,\mathbf{1}_{\mathcal{D}}(x,y')\,\widetilde{q}(x,dy') \right] \delta_{x}(dy)\,,
    \label{q-in-metropolis-hasting}
  \end{align}
  where $\mathbf{1}_{\mathcal{D}}$ is the indicator function of the set $\mathcal{D}$
  and $\delta_{x}$ denotes the Dirac measure on $\Sigma$ centered at $x$.

  Next, we compute $\widetilde{q}(x,\cdot)$. Note that, unlike $q(x,\cdot)$, the measure~$\widetilde{q}(x,\cdot)$
  is not a probability measure. In fact, recall that $\widetilde{x}^{(i+1)}=y$ in Algorithm~\ref{algo-mala-on-sigma} is chosen
  randomly from the set $\Psi_x(v)$, with $v\in \mathbb{R}^{d-k}$ following
  the Gaussian distribution $\gamma$ in~\eqref{rescaled-gaussian}. 
  Since $\Sigma$ is compact and the map $G_x$ in~\eqref{v-by-y} is continuous,
  the image $G_x(\Sigma)$ is a compact subset of~$\mathbb{R}^{d-k}$. Together
  with the relation~\eqref{relation-between-y-and-v}, we know that $\Psi_x(v)=\emptyset$ for all $v$ in
  the set $\mathbb{R}^{d-k} \setminus G_x(\Sigma)$, which has positive measure
  under $\gamma$. In other words, $\widetilde{x}^{(i+1)}$ is defined 
      with probability less than $1$ or, equivalently, $\widetilde{q}(x,\Sigma) < 1$.
  Clearly, a state $y\in\Sigma$ can be picked only if $y\in \mbox{Im}\Psi_x$ (see \eqref{def-im-of-psi-x}). In
  other words, we have $\widetilde{q}(x,\Sigma\setminus \mbox{Im}\Psi_x)=0$ for all~$x \in \Sigma$. 
  Moreover, note that $y\in C_x$ implies $v\in \mathcal{N}_x$. Therefore, we have
  $\widetilde{q}(x,C_x)=0$ for all~$x \in \Sigma$, since $\gamma(\mathcal{N}_x)=0$ from the first item of Proposition~\ref{prop-set-f-mala}. 
  Applying Lemma~\ref{lemma-on-push-forward-by-psi} below, we obtain the following equality as measures on~$\Sigma$:
  \begin{equation}
    \widetilde{q}(x,dy)= \omega(y\,|\,x,G_x(y))\,\left|\det D G_x(y)\right|\left(\frac{2\pi}{\beta}\right)^{-\frac{d-k}{2}}\!\!
      \mathrm{e}^{-\frac{\beta|G_x(y)|^2}{2}}\, \mathbf{1}_{\mbox{Im}\Psi_x \setminus C_x}(y)\, \sigma_\Sigma(dy)\,,
    \label{proposal-density-q-push-forward-by-psi}
    \end{equation}
  where, using the definition of $G_x$ in~\eqref{v-by-y}, we have 
    \begin{align}
      \det DG_x(y) = (2\tau)^{-\frac{d-k}{2}}\det\left(U_x^TU_y\right) \,.
      \label{eqn-of-dg-at-y}
    \end{align}

      Let us define the set
  \begin{align}
    \widetilde{\mathcal{D}} = \left\{(x,y)\in \Sigma\times \Sigma ~\middle|~ y
    \in \mbox{Im}\Psi_x,~ x \in \mbox{Im}\Psi_{y},~ \det(\nabla\xi(y)^T\nabla \xi(x)) \neq 0 \right\}\,.
    \label{d-bar-set-mala}
  \end{align}
  Since $\widetilde{q}(x, \mbox{Im}\Psi_x \cap C_x) = 0$ for any $x\in \Sigma$ and
  $\mathcal{D}\setminus \widetilde{\mathcal{D}} \subseteq \{ 
  (x,y)\in \Sigma\times \Sigma ~|~ y\in \mbox{Im}\Psi_{x} \cap C_x\}$, the
  set $\widetilde{\mathcal{D}}$ differs from $\mathcal{D}$ by a set that has
  zero measure under the product measure~$\sigma_\Sigma(dx)\widetilde{q}(x,dy)$. Using this fact, as well as \eqref{q-in-metropolis-hasting}, we can compute 
  \begin{equation}
    \label{q-in-metropolis-hasting-precise}
    \begin{aligned}
      & \int_{\Sigma\times \Sigma} f(x,y)\, q(x,dy)\,\mathrm{e}^{-\beta
      V(x)}\,\sigma_\Sigma(dx) =  \int_{\Sigma\times \Sigma} f(x,y)\,
      a(y\,|\,x)\, \mathbf{1}_{\widetilde{\mathcal{D}}}(x,y)\,
      \mathrm{e}^{-\beta V(x)}\,\sigma_\Sigma(dx)\,\widetilde{q}(x,dy)\,\\
      & \qquad \qquad + \int_{\Sigma}\, f(x,x)\,\left[1-\int_{\Sigma}
      \,a(y'\,|\,x)\,\mathbf{1}_{\widetilde{\mathcal{D}}}(x,y')\,\widetilde{q}(x,dy')
      \right] \mathrm{e}^{-\beta V(x)}\,\sigma_\Sigma(dx)\,.    
    \end{aligned}
  \end{equation}
  Now, for $(x,y)\in \widetilde{\mathcal{D}}$, recall that $v'=G_y(x) \in
  \mathbb{R}^{d-k}$ is the unique element such that $x \in \Psi_{y}(v')$. Since $(x,y)\in \widetilde{\mathcal{D}}$ implies $x \not\in C_y$, i.e.\ $x$ is a regular point of $G_y$ (recall that $C_y$ is the set of critical points of
  $G_y$), we have $|\det DG_y(x)| \neq 0$. For the same reason, we have $|\det DG_x(y)| \neq 0$.
The choice
  \begin{align}
      \forall~(x, y) \in \widetilde{\mathcal{D}}, \quad a(y\,|\,x) =
      \min\left\{1,\frac{\omega(x\,|\,y,G_y(x))\left|\det D
      G_y(x)\right|}{\omega(y\,|\,x,G_x(y))\left|\det D
      G_x(y)\right|}\,\mathrm{e}^{-\beta \left[\left(V(y) +
      \frac{1}{2}|G_y(x)|^2\right) - \left(V(x)+\frac{1}{2}|G_x(y)|^2\right)\right]}\right\}
      \label{acceptance-rate-two-steps-in-proposal}
    \end{align}
    then implies with~\eqref{proposal-density-q-push-forward-by-psi} that
    \begin{align}
      \begin{aligned}
        & \int_{\Sigma\times \Sigma} f(x,y)\, a(y\,|\,x)\,
	\mathbf{1}_{\widetilde{\mathcal{D}}}(x,y)\, \mathrm{e}^{-\beta
	V(x)}\,\sigma_\Sigma(dx)\,\widetilde{q}(x,dy) \\
        & \qquad \qquad = \int_{\Sigma\times \Sigma} f(x,y)\, a(x\,|\,y)\,
	\mathbf{1}_{\widetilde{\mathcal{D}}}(x,y)\, \mathrm{e}^{-\beta
	V(y)}\,\sigma_\Sigma(dy)\,\widetilde{q}(y,dx)\,,
      \end{aligned}
          \label{eqn-identity-proof-of-mala-symmetry}
    \end{align}
    where we have also used the fact that $\mathbf{1}_{\mbox{Im}\Psi_x \setminus C_x}(y)
    =
\mathbf{1}_{\mbox{Im}\Psi_y \setminus C_y}(x) = 1$ when $\mathbf{1}_{\widetilde{\mathcal{D}}}(x,y)=1$.
    The equalities~\eqref{q-in-metropolis-hasting-precise} and
    \eqref{eqn-identity-proof-of-mala-symmetry} together with the fact that
    $\mathbf{1}_{\widetilde{\mathcal{D}}}$ is symmetric in turn imply~\eqref{usual-detailed-balance-condition-integral}. Moreover, 
using~\eqref{eqn-of-dg-at-y} it is clear that\,\footnote{The quantity~$|\det DG_x(y)|$ is
actually the absolute value of the determinant of the differential at
point $y$ of $G_x$ viewed as a map from $\Sigma$ to $T_x\Sigma$, and is
therefore independent of the choices of the orthonormal bases $U_x$ and
$U_y$.\label{footnote-on-determinant}} 
    \begin{align*}
      \det DG_x(y) = \det DG_y(x)  \,. 
    \end{align*}
Therefore,  \eqref{acceptance-rate-two-steps-in-proposal} reduces to the acceptance probability~\eqref{rate-for-mala-on-sigma}. 
    This allows to conclude that the Markov chain generated by
    Algorithm~\ref{algo-mala-on-sigma} is indeed reversible with respect to~$\nu_{\Sigma}$ on~$\Sigma$.
  \end{proof}

  Finally, we present the proof of the following technical Lemma, which has
  been used in the proof of Theorem~\ref{thm-mala-on-sigma} above.
  
  \begin{lemma} 
    Consider the Gaussian measure~$\gamma$ defined
    in~\eqref{rescaled-gaussian}. For $x \in \Sigma$, let $C_x$ be the set 
    in~\eqref{c-x-in-prop1}, $\mbox{Im}\Psi_x$ be the set in \eqref{def-im-of-psi-x}, and $\widetilde{q}(x, \cdot)$ be the distribution of the proposal state $\widetilde{x}^{(i+1)}\in\Sigma$ of the Markov chain generated by Algorithm~\ref{algo-mala-on-sigma}.
    Under Assumptions~\ref{assump-xi} and~\ref{assump-psi-on-state-space}, the following equality of measures on~$\Sigma$ holds:
    \begin{align}
      \widetilde{q}(x,dy)= \omega(y\,|\,x,G_x(y))\,\left|\det D G_x(y)\right|\left(\frac{2\pi}{\beta}\right)^{-\frac{d-k}{2}}\!\!
      \mathrm{e}^{-\frac{\beta|G_x(y)|^2}{2}}\, \mathbf{1}_{\mbox{Im}\Psi_x \setminus C_x}(y)\, \sigma_\Sigma(dy)\,,
    \label{proposal-density-q-push-forward-by-psi-repeat}
  \end{align}
    where the map $G_x$ is defined in \eqref{v-by-y}, 
    $\det D G_x(y)$ is given by \eqref{eqn-of-dg-at-y} and satisfies $\det D G_x(y)\neq 0$ for $y\in \Sigma\setminus C_x$.
      \label{lemma-on-push-forward-by-psi}
    \end{lemma}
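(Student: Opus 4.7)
The plan is to unfold the definition of $\widetilde{q}(x,\cdot)$ in terms of the Gaussian $\gamma$, restrict to a full-measure regular regime, and then perform a change of variables through the map $G_x$. By the description of the proposal step in Algorithm~\ref{algo-mala-on-sigma}, for any bounded measurable $f : \Sigma \to \mathbb{R}$,
$$
\int_{\Sigma} f(y) \, \widetilde{q}(x, dy) = \int_{\mathbb{R}^{d-k}} \left( \sum_{y \in \Psi_x(v)} f(y) \, \omega(y\,|\,x, v) \right) \gamma(dv),
$$
with the convention that the inner sum vanishes when $\Psi_x(v)=\emptyset$. By the first item of Proposition~\ref{prop-set-f-mala}, $\gamma(\mathcal{N}_x)=0$, so the integration can be restricted to $v \in \mathbb{R}^{d-k} \setminus \mathcal{N}_x$. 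For such $v$, the relation \eqref{relation-between-y-and-v} together with $\mathcal{N}_x = G_x(C_x)$ forces $\Psi_x(v) \subset \Sigma \setminus C_x$, and the third item of Assumption~\ref{assump-psi-on-state-space} provides, around any such $v$, a finite family of local $C^1$-diffeomorphisms $\Psi_x^{(j)}$ that list the elements of $\Psi_x(v)$.

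The second step is to partition $\mbox{Im}\Psi_x \setminus C_x$ as a disjoint countable union $\bigsqcup_{k} A_k$ of measurable sets, each contained in the image $\Psi_x^{(j_k)}(\mathcal{O}_k)$ of one of these local diffeomorphisms. Such a partition is obtained by extracting a countable subcover from the open cover $\{\Psi_x^{(j)}(\mathcal{O})\}$ thanks to the second countability of $\Sigma$, followed by the standard disjointing procedure $A_k = \Psi_x^{(j_k)}(\mathcal{O}_k) \setminus \bigcup_{l<k} \Psi_x^{(j_l)}(\mathcal{O}_l)$. By \eqref{relation-between-y-and-v}, the inverse of $\Psi_x^{(j_k)}$ restricted to $A_k$ is exactly $G_x|_{A_k}$. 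Applying the change of variables formula for $C^1$-diffeomorphisms between equidimensional Riemannian manifolds on each piece, with the density of $\gamma$ pulled back via $G_x$, and summing over $k$ yields
$$
\int_{\Sigma} f(y) \, \widetilde{q}(x, dy) = \int_{\mbox{Im}\Psi_x \setminus C_x} f(y) \, \omega(y\,|\,x, G_x(y)) \left( \frac{2\pi}{\beta} \right)^{-(d-k)/2} \mathrm{e}^{-\beta |G_x(y)|^2 / 2} \bigl| \det DG_x(y) \bigr| \, \sigma_\Sigma(dy),
$$
where the indicator $\mathbf{1}_{\mbox{Im}\Psi_x \setminus C_x}$ comes from $\bigsqcup_k A_k = \mbox{Im}\Psi_x \setminus C_x$.

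Finally, the explicit expression \eqref{eqn-of-dg-at-y} for the Jacobian follows by differentiating \eqref{v-by-y}: the differential $DG_x(y) : T_y \Sigma \to \mathbb{R}^{d-k}$ is the restriction of the linear map $(2\tau)^{-1/2} U_x^T$ to $T_y\Sigma$, and its matrix in the orthonormal basis $U_y$ of $T_y\Sigma$ is $(2\tau)^{-1/2} U_x^T U_y$. The non-vanishing of $\det DG_x(y)$ on $\Sigma \setminus C_x$ is then immediate from the chain of equivalences \eqref{equivalent-definitions-of-cx}--\eqref{cx-set-mala} already derived in the proof of Proposition~\ref{prop-differentiability-mala}. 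The main technical obstacle is the measurable partitioning in the second paragraph, which must be set up so that each $y \in \mbox{Im}\Psi_x \setminus C_x$ is counted exactly once and the local change of variables can be assembled into a single global identity; the local charts provided by Assumption~\ref{assump-psi-on-state-space}(3) and the uniqueness of $v$ for a given $y$ recorded in \eqref{relation-between-y-and-v} are the two ingredients that make this rigorous.
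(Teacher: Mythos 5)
Your argument is correct, but it assembles the local changes of variables by a genuinely different mechanism than the paper. The paper first turns $\widetilde{q}(x,\cdot)$ into a bona fide measure $(\Psi_x)_{\widetilde{\#}}\gamma$ by defining it on the semi-ring $\mathbb{S}$ of sets on which $G_x$ is injective and invoking the extension theorem, and then proves the density formula via a partition of unity subordinate to a finite open cover of the compact manifold $\Sigma$ (one chart being an $\epsilon$-neighborhood of $\Sigma\setminus(\mbox{Im}\Psi_x\setminus C_x)$), followed by a dominated-convergence limit $\epsilon\to 0$. You instead start from the $v$-side representation of $\widetilde{q}$, discard the $\gamma$-null set $\mathcal{N}_x$ so that $\Psi_x(v)\subset\Sigma\setminus C_x$, and decompose $\mbox{Im}\Psi_x\setminus C_x$ into countably many disjoint measurable pieces $A_k$ (Lindel\"of subcover plus disjointification) on each of which $G_x$ inverts a chart $\Psi_x^{(j_k)}$; the uniqueness relation \eqref{relation-between-y-and-v} ensures each $y$ is counted exactly once, and the change of variables is applied piecewise and summed. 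This route avoids both the extension theorem and the partition-of-unity/limiting step, at the (small) price that your opening display presupposes the measurability of $v\mapsto\sum_{y\in\Psi_x(v)}f(y)\,\omega(y\,|\,x,v)$ — precisely the point the paper's semi-ring construction is designed to secure; in your setting it follows a posteriori since on each $A_k$ the summand equals $f(\Psi_x^{(j_k)}(v))\,\omega(\Psi_x^{(j_k)}(v)\,|\,x,v)\,\mathbf{1}_{G_x(A_k)}(v)$, so the argument closes, but that sentence should be said. Two cosmetic points: intersect each chart image with the open set $\Sigma\setminus C_x$ before disjointifying so that the union of the $A_k$ is exactly $\mbox{Im}\Psi_x\setminus C_x$, and note that exchanging the sum over $k$ with the integral is licensed by nonnegativity (or by $\sum_{y}\omega(y\,|\,x,v)\le 1$). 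Your Jacobian computation and its nonvanishing off $C_x$ agree with \eqref{eqn-of-dg-at-y} and \eqref{cx-set-mala}.
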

  \begin{proof}
We have already shown that $\widetilde{q}(x, \Sigma\setminus \mbox{Im}\Psi_x)
    = \widetilde{q}(x,C_x) =0$, for all $x \in \Sigma$ (see the discussion above~\eqref{proposal-density-q-push-forward-by-psi}). 
    This implies that the support of $\widetilde{q}(x,\cdot)$ is included in
    $\mbox{Im}\Psi_x\setminus C_x$. It is obvious that the support of the
    measure on the right hand side of
    \eqref{proposal-density-q-push-forward-by-psi-repeat} is also included in
    $\mbox{Im}\Psi_x\setminus C_x$. Therefore, to verify~\eqref{proposal-density-q-push-forward-by-psi-repeat}, it is sufficient to show that both measures agree on $\mbox{Im}\Psi_x\setminus C_x$.
    Note that we only need to consider the case where $\mbox{Im}\Psi_x\setminus C_x \neq \emptyset$, since otherwise $\widetilde{q}(x,\cdot)\equiv 0$ and therefore \eqref{proposal-density-q-push-forward-by-psi-repeat} holds trivially (both sides are zero). 
    Intuitively, $\widetilde{q}(x,\cdot)$ corresponds to the image measure of the Gaussian density in~$v$ by~$\Psi_x$. 
   However, the push-forward measure by a set-valued map is in general not well defined. 
   Nonetheless, in our context, since the map $\Psi_x$ is injective (in the sense
    of \eqref{psi-given-y-v-is-unique}), locally $C^1$-differentiable and
    invertible (in the sense of the third item of
    Assumption~\ref{assump-psi-on-state-space}), there is a natural way to
    define the push-forward measure of $\gamma$ by $\Psi_x$. We use the
    notation $(\Psi_x)_{\widetilde{\#}}\gamma$ instead of the standard
    one $(\Psi_x)_{\#}\gamma$ for push-forward measures to indicate the difference.
    Specifically, in view of Algorithm~\ref{algo-mala-on-sigma}, we define 
  \begin{equation}
    \forall~ \mathcal{B} \in \mathbb{S}, \qquad
    \big((\Psi_x)_{\widetilde{\#}}\gamma\big)(\mathcal{B}) = \gamma\left(
    \left\{v\in \mathbb{R}^{d-k}\,\middle|\, \Psi_x(v)\cap \mathcal{B} \neq
    \emptyset\right\}\right)\,,
    \label{def-push-forward-by-psi} 
  \end{equation} 
  where 
  \begin{equation}
    \mathbb{S}=\{\emptyset\}\cup \left\{ \mathcal{B} \mbox{~non-empty and measurable}\,|\,\mathcal{B}\subseteq
      \mbox{Im}\Psi_x\setminus C_x, \ G_x|_{\mathcal{B}}~ \mbox{is injective} \right\}\,.
      \label{def-semi-ring-s}
  \end{equation}
    Note that Assumption~\ref{assump-psi-on-state-space} implies that the non-empty set $\mbox{Im}\Psi_x \setminus C_x$ is an open subset of $\Sigma$. The set $\mathbb{S}$ forms a semi-ring, since it is straightforward to see that $\mathcal{B}_1\cap \mathcal{B}_2, \mathcal{B}_1\setminus \mathcal{B}_2\in \mathbb{S}$ when $\mathcal{B}_1,\mathcal{B}_2\in \mathbb{S}$.
    Applying the extension theorem~\cite[Theorem 11.3 of Section 11, Chapter 2]{Probability-Billingsley-3rd}, we can extend $(\Psi_x)_{\widetilde{\#}}\gamma$ to a measure over the $\sigma$-ring
    generated by $\mathbb{S}$. This $\sigma$-ring is actually a
    $\sigma$-algebra of $\mbox{Im}\Psi_x\setminus C_x$, since it contains
    $\mbox{Im}\Psi_x\setminus C_x$. To prove the latter statement, we can write $\mbox{Im}\Psi_x\setminus C_x$ as a countable union of compact subsets
    of $\Sigma$
    \[
    \mbox{Im}\Psi_x\setminus C_x =\bigcup_{i=1}^{+\infty} \left\{y\in
    \Sigma\,\middle|\,d_{\Sigma}(y, \Sigma\setminus(\mbox{Im}\Psi_x\setminus C_x))\ge \frac{1}{i} \right\}.
    \]
    The subsets on the right hand side of the above equality are indeed compact since they are closed subsets of the compact set~$\Sigma$. Since each such compact subset has a finite covering consisting of open subsets of
    $\mbox{Im}\Psi_x\setminus C_x$ that belong to $\mathbb{S}$~(the open
    subsets being of the form $\mathcal{Q}\cap  \Psi_x^{(j)}(\mathcal{O})$, where 
    $\mathcal{Q}$ and $\Psi_x^{(j)}(\mathcal{O})$ are the neighborhoods
    discussed in Proposition~\ref{prop-differentiability-mala} and in the
    third item of Assumption~\ref{assump-psi-on-state-space}, respectively), we know that $\mbox{Im}\Psi_x\setminus C_x$ is a countable union of sets in \eqref{def-semi-ring-s}. 
     Moreover, the $\sigma$-algebra generated by $\mathbb{S}$ is indeed the Borel $\sigma$-algebra of $\mbox{Im}\Psi_x\setminus C_x$, 
     since the set $\mathbb{S}$ defined in~\eqref{def-semi-ring-s} consists of Borel
     measurable sets, and includes all open sets small enough in order for~$G_x$ to be injective. 
The measure~$(\Psi_x)_{\widetilde{\#}}\gamma$ can then be further extended to the Borel $\sigma$-algebra of $\Sigma$, such that the set $\Sigma\setminus (\mbox{Im}\Psi_x\setminus C_x)$ has measure zero. Since we are dealing with $\sigma$-finite measures, such an extension is unique, and we will still denote it by~$(\Psi_x)_{\widetilde{\#}}\gamma$ with some abuse of notation.
According to Algorithm~\ref{algo-mala-on-sigma} and the definition~\eqref{def-push-forward-by-psi}, we have the following equality in the sense of measures on~$\Sigma$:\footnote{According to Algorithm~\ref{algo-mala-on-sigma} and~\eqref{def-push-forward-by-psi}, the two measures coincide for sets in~\eqref{def-semi-ring-s}. As a result, they are equal as measures over the Borel $\sigma$-algebra of~$\Sigma$, due to the uniqueness of the extension of a $\sigma$-finite measure.}
\begin{equation}
    \widetilde{q}(x,dy)=\, \omega(y\,|\,x,G_x(y))\,
    \big((\Psi_x)_{\widetilde{\#}}\gamma\big)(dy) \,.
    \label{tilde-q-by-push-forward-by-psi}
    \end{equation}
To prove \eqref{proposal-density-q-push-forward-by-psi-repeat}, it is therefore sufficient to prove the identity
    \begin{equation}
      \int_{\Sigma} f(y)\, ((\Psi_x)_{\widetilde{\#}}\gamma)(dy) = 
      \int_{\Sigma}
      f(y) \,\left|\det D G_x(y)\right|\left(\frac{2\pi}{\beta}\right)^{-\frac{d-k}{2}}\!\!
      \mathrm{e}^{-\frac{\beta|G_x(y)|^2}{2}}\,
      \mathbf{1}_{\mbox{Im}\Psi_x \setminus C_x}(y)\, \sigma_\Sigma(dy)\,,
      \label{int-identity-pushfwd}
    \end{equation}
    for all bounded measurable test functions $f: \Sigma \rightarrow \mathbb{R}$.
    We prove this equality by a limiting procedure involving the dominated convergence theorem.
    
    For any $\epsilon > 0$, the set $\mathcal{Q}^\epsilon:=\{y\in \Sigma\,|\,d_{\Sigma}(y,\Sigma\setminus
    (\mbox{Im}\Psi_x\setminus C_x))< \epsilon\}$, where $d_{\Sigma}$ denotes the Riemannian distance on $\Sigma$, is a neighborhood of the set 
    $\Sigma\setminus (\mbox{Im}\Psi_x\setminus C_x)$, and we have 
    \begin{equation}
    \forall~0 < \epsilon < \epsilon', \quad \mathcal{Q}^\epsilon \subseteq \mathcal{Q}^{\epsilon'}\,, \qquad\mbox{and}\qquad \bigcap_{\epsilon >0}\mathcal{Q}^\epsilon= \Sigma\setminus (\mbox{Im}\Psi_x\setminus C_x)\,,
      \label{properties-of-sets-q-eps}
    \end{equation}
 where the second identity holds because $\Sigma\setminus (\mbox{Im}\Psi_x\setminus C_x)$ is a closed set. 
      For any $y \in \mbox{Im}\Psi_x\setminus C_x$, the third item of
      Assumption~\ref{assump-psi-on-state-space} implies that there exists
      a neighborhood $\mathcal{O}_v$ of $v$, where $v=G_x(y)\in \mathbb{R}^{d-k}$ satisfies $y\in \Psi_x(v)$, 
      and a $C^1$-diffeomorphism $\Psi_x^{(j)}: \mathcal{O}_v\rightarrow
      \Psi_x^{(j)}(\mathcal{O}_v)$, such that $y=\Psi_x^{(j)}(v)$ and $\Psi_x^{(j)}(\mathcal{O}_v)
      \subseteq \mbox{Im}\Psi_x$. We set $\mathcal{Q}_y=\Psi_x^{(j)}(\mathcal{O}_v)$. 
      Since $\Psi_x^{(j)}$ is a $C^1$-diffeomorphism on $\mathcal{O}_v$ and
      $G_x(\Psi_x^{(j)}(\bar{v}))=~\bar{v}$ for all $\bar{v}\in \mathcal{O}_v$, we have that $\mathcal{Q}_y$ is a neighborhood of $y$ and 
     $G_x|_{\mathcal{Q}_y}=(\Psi_x^{(j)})^{-1}$, which implies that
$G_x|_{\mathcal{Q}_y}$ is a $C^1$-diffeomorphism and in particular that
    $\mathcal{Q}_y \subseteq \mbox{Im}\Psi_x \setminus C_x$ (since $C_x$ is the set of critical points of the map $G_x$).
By the definition \eqref{def-semi-ring-s}, we have $\mathcal{Q}_y\in \mathbb{S}$.

    Let us fix $\epsilon>0$. Note that the union of the open sets $\mathcal{Q}^\epsilon$ and $\mathcal{Q}_y$ for $y\in \mbox{Im}\Psi_x \setminus C_x$ forms an open covering of $\Sigma$,
    i.e.\ $\mathcal{Q}^\epsilon \cup (\cup_{y\in \mbox{Im}\Psi_x \setminus C_x} \mathcal{Q}_y)=\Sigma$.
    Since $\Sigma$ is compact, we can find a finite subcovering, which we denote
    by $\mathcal{Q}^{(0)}, \mathcal{Q}^{(1)}, \ldots, \mathcal{Q}^{(n)}$,
    where $n\ge 1$. We can assume without loss of generality that $\mathcal{Q}^{(0)}=\mathcal{Q}^\epsilon$ and,
for each $i=1,\ldots, n$, $\mathcal{Q}^{(i)} = \mathcal{Q}_y \subseteq \mbox{Im}\Psi_x \setminus
    C_x$ for some $y\in\mbox{Im}\Psi_x \setminus C_x$.
    We use the fact that there exists a partition of unity for the finite
    covering $\mathcal{Q}^{(0)}, \mathcal{Q}^{(1)}, \ldots, \mathcal{Q}^{(n)}$ of~$\Sigma$; see Chapter 2.3 of~\cite{lang2002differential}. 
    In other words, there exist functions $\rho_0,\rho_1, \ldots, \rho_n: \Sigma\rightarrow [0,1]$,
    such that the support of $\rho_i$ is contained in $\mathcal{Q}^{(i)}$,
    where $i=0, 1,\dots, n$, and
    $\sum_{i=0}^{n}\rho_i(y)=1$ for all $y\in \Sigma$.
    Define $f^\epsilon(y) = f(y)\mathbf{1}_{\Sigma\setminus
    \mathcal{Q}^\epsilon}(y)=f(y)\mathbf{1}_{\Sigma\setminus \mathcal{Q}^{(0)}}(y)$, for all $y \in \Sigma$.
Since the support of $\rho_0$ is contained in $\mathcal{Q}^{(0)}$, it is clear that $\rho_0(y)f^\epsilon(y) = 
\rho_0(y) f(y) \mathbf{1}_{\Sigma\setminus \mathcal{Q}^{(0)}}(y) = 0$. 
Therefore, we have 
    \begin{equation}
      \begin{aligned}
	\int_{\Sigma} f^\epsilon(y)\, ((\Psi_x)_{\widetilde{\#}}\gamma)(dy) = &
	\int_{\Sigma} \Big(\sum_{i=0}^n\rho_i(y)\Big) f^\epsilon(y)\,
	((\Psi_x)_{\widetilde{\#}}\gamma)(dy) \\
	 =& \sum_{i=1}^n \int_{\Sigma} \rho_i(y) f^\epsilon(y)\,
	 ((\Psi_x)_{\widetilde{\#}}\gamma)(dy) \\
	 =& \sum_{i=1}^n \int_{\mathcal{Q}^{(i)}} \rho_i(y) f^\epsilon(y)\,
	 ((\Psi_x)_{\widetilde{\#}}\gamma)(dy) \,,
	\end{aligned}
	\label{lfs-f-eps-pu-1}
    \end{equation}
    where in the last equality we have used the fact that the support of $\rho_i$ is contained in $\mathcal{Q}^{(i)}$ for $i = 1, \ldots, n$.
To proceed, we note that, by the construction of the sets $\mathcal{Q}^{(i)}$, $i = 1, \ldots, n$,
the map $G_x|_{\mathcal{Q}^{(i)}}$ is a $C^1$-diffeomorphism and one has, for all measurable sets $\mathcal{B}\subseteq \Sigma$,
$\{v\in \mathbb{R}^{d-k}\,|\, \Psi_x(v)\cap (\mathcal{B}\cap \mathcal{Q}^{(i)}) \neq \emptyset\} = G_x(\mathcal{B}\cap \mathcal{Q}^{(i)})$.
Therefore, since $\mathcal{B}\cap \mathcal{Q}^{(i)} \in \mathbb{S}$, we find, using \eqref{def-push-forward-by-psi}, 
\begin{equation*}
    ((\Psi_x)_{\widetilde{\#}}\gamma)(\mathcal{B} \cap \mathcal{Q}^{(i)})  =
    \gamma\big(\{v\in \mathbb{R}^{d-k}\,|\, \Psi_x(v)\cap (\mathcal{B}\cap \mathcal{Q}^{(i)}) \neq \emptyset\}\big) 
    =  \gamma\big(G_x(\mathcal{B} \cap \mathcal{Q}^{(i)})\big)\,, 
\end{equation*}
which is equivalent to 
\begin{equation*}
  \int_{\mathcal{Q}^{(i)}}  \mathbf{1}_{\mathcal{B}}(y)\, ((\Psi_x)_{\widetilde{\#}}\gamma)(dy) 
  = \int_{G_x(\mathcal{Q}^{(i)})}
  \mathbf{1}_{\mathcal{B}}\left((G_x|_{\mathcal{Q}^{(i)}})^{-1}(v)\right)\gamma(dv) \,,
\end{equation*}
for all measurable sets $\mathcal{B}\subseteq \Sigma$. 
By density, this identity also holds when we replace
$\mathbf{1}_{\mathcal{B}}(\cdot)$ by any bounded measurable function. Then,
since $(G_x|_{\mathcal{Q}^{(i)}})^{-1}$ is a $C^1$-diffeomorphism from
$G_x(\mathcal{Q}^{(i)})\subset\mathbb{R}^{d-k}$ to $\mathcal{Q}^{(i)}\subset
\Sigma$ which provides a local parameterization for~$\Sigma$, we can compute,
by definition\footnote{In fact, since the surface measure $\sigma_\Sigma$ on $\Sigma$
is induced by the standard Euclidean scalar product in~$\mathbb{R}^d$ and 
$\big|\det D(G_x|_{\mathcal{Q}^{(i)}})^{-1}\big| = \big[\det
\big(D(G_x|_{\mathcal{Q}^{(i)}})^{-1} (D(G_x|_{\mathcal{Q}^{(i)}})^{-1})^T\big)\big]^{\frac{1}{2}}$, the
determinant factor here coincides with the factor to define the surface
measure~$\sigma_\Sigma$ \cite[Section III.3]{chavel_2006}.} of the surface measure~$\sigma_\Sigma(dy)=|\det D(G_x|_{\mathcal{Q}^{(i)}})^{-1}(v)| \, dv=|\det DG_x(y)|^{-1} \,dv$,
\begin{equation}
  \begin{aligned}
    &	 \sum_{i=1}^n \int_{\mathcal{Q}^{(i)}} \rho_i(y) f^\epsilon(y)\,
    ((\Psi_x)_{\widetilde{\#}}\gamma)(dy) 
    = \sum_{i=1}^n \int_{G_x(\mathcal{Q}^{(i)})}
    \rho_i\big((G_x|_{\mathcal{Q}^{(i)}})^{-1}(v)\big) f^\epsilon\big((G_x|_{\mathcal{Q}^{(i)}})^{-1}(v)\big)\, \gamma(dv) \\
    &\qquad \qquad \qquad =  \sum_{i=1}^n \int_{\mathcal{Q}^{(i)}}
      \rho_i(y)\,f^\epsilon(y) \,\left|\det D G_x (y)\right|\left(\frac{2\pi}{\beta}\right)^{-\frac{d-k}{2}}\!\!  \mathrm{e}^{-\frac{\beta|G_x(y)|^2}{2}}\,\sigma_\Sigma(dy)\,,
  \end{aligned}
	\label{lfs-f-eps-pu-2}
\end{equation}
where we recall that $\det D G_x (y)$ is defined by \eqref{eqn-of-dg-at-y} and is non-zero at $y\in \mathcal{Q}^{(i)}\subset \mbox{Im}\Psi_x \setminus C_x$. 
On the other hand, since $\rho_0(y)f^\epsilon(y)=0$ for all~$y\in \Sigma$, 
\begin{equation}
  \begin{aligned}
  & \int_{\Sigma}
      f^\epsilon(y) \,\left|\det D G_x(y)\right|\left(\frac{2\pi}{\beta}\right)^{-\frac{d-k}{2}}\!\!
      \mathrm{e}^{-\frac{\beta|G_x(y)|^2}{2}}\,
      \mathbf{1}_{\mbox{Im}\Psi_x \setminus C_x}(y)\,
      \sigma_\Sigma(dy)\\
     & \qquad  = \sum_{i=1}^n \int_{\Sigma}
       \rho_i(y) f^\epsilon(y) \,\left|
       \det D G_x(y)\right|\left(\frac{2\pi}{\beta}\right)^{-\frac{d-k}{2}}\!\!
      \mathrm{e}^{-\frac{\beta|G_x(y)|^2}{2}}\, \mathbf{1}_{\mbox{Im}\Psi_x \setminus C_x}(y)\,
      \sigma_\Sigma(dy)\\
      & \qquad = \sum_{i=1}^n \int_{\mathcal{Q}^{(i)}}
       \rho_i(y) f^\epsilon(y) \,\left|
       \det D G_x(y)\right|\left(\frac{2\pi}{\beta}\right)^{-\frac{d-k}{2}}\!\!
      \mathrm{e}^{-\frac{\beta|G_x(y)|^2}{2}}\,\sigma_\Sigma(dy)\,,
\end{aligned}
  \label{rhs-f-eps-pu}
\end{equation}
where the last equality follows from the fact that the support of~$\rho_i$ is contained in $\mathcal{Q}^{(i)}\subseteq 
\mbox{Im}\Psi_x \setminus C_x$. Combining~\eqref{lfs-f-eps-pu-1},
\eqref{lfs-f-eps-pu-2}, and~\eqref{rhs-f-eps-pu}, we obtain the following equality for any $\epsilon > 0$: 
    \begin{equation}
      \int_{\Sigma} f^\epsilon(y)\, ((\Psi_x)_{\widetilde{\#}}\gamma)(dy) = 
      \int_{\Sigma}
      f^\epsilon(y) \,\left|
      \det D G_x(y)\right|\left(\frac{2\pi}{\beta}\right)^{-\frac{d-k}{2}}\!\!
      \mathrm{e}^{-\frac{\beta|G_x(y)|^2}{2}}\,
      \mathbf{1}_{\mbox{Im}\Psi_x \setminus C_x}(y)\,
      \sigma_\Sigma(dy)\,.
\label{int-identity-pushfwd-f-eps}
    \end{equation}

    To obtain~\eqref{int-identity-pushfwd} hence~\eqref{proposal-density-q-push-forward-by-psi-repeat}, we take the limit $\epsilon \rightarrow 0$ in \eqref{int-identity-pushfwd-f-eps}. Since $f^\epsilon(y) = f(y)\mathbf{1}_{\Sigma\setminus \mathcal{Q}^\epsilon}(y)$ for all~$y\in \Sigma$, 
we have $|f^\epsilon| \leq |f|$ and, using \eqref{properties-of-sets-q-eps},
it is easy to show that $$\lim_{\epsilon \rightarrow 0} f^\epsilon(y) = 
f(y) \lim_{\epsilon \rightarrow 0} (1 - \mathbf{1}_{\mathcal{Q}^\epsilon}(y)) = 
f(y) (1 - \mathbf{1}_{\Sigma\setminus (\mbox{Im}\Psi_x\setminus C_x)}(y)) =
f(y)\mathbf{1}_{\mbox{Im}\Psi_x\setminus C_x}(y)$$ for all $y\in \Sigma$.
The result then follows from the dominated convergence theorem and the fact
that the set $\Sigma\setminus (\mbox{Im}\Psi_x\setminus C_x)$ has measure zero under the measure $(\Psi_x)_{\widetilde{\#}}\gamma$. 
\end{proof}

\subsection{Proofs of the results of Section~\ref{subsec-hmc}}
\label{sec-proofs-HMC}

We prove Propositions~\ref{prop-differentiability-hmc} and~\ref{prop-set-f-hmc},
then Lemma~\ref{lemma-connection-reversibility-and-rmr}, and finally Theorem~\ref{thm-algo-2-rattle}.

\begin{proof}[Proof of Proposition~\ref{prop-differentiability-hmc}]
  The proof of Proposition~\ref{prop-differentiability-hmc} is similar to the proof of Proposition~\ref{prop-differentiability-mala}. To write the result, we introduce a matrix~$U_{M,x}\in \mathbb{R}^{d\times (d-k)}$ whose columns
form an orthonormal basis of $T^*_x\Sigma$ with respect to
$\langle\cdot,\cdot\rangle_{M^{-1}}$, i.e.\ $U_{M,x}^TM^{-1}U_{M,x}=I_{d-k}$
and $\nabla\xi(x)^TM^{-1}U_{M,x}=0$. For a given point~$x\in \Sigma$, the map $x'
  \mapsto U_{M,x'}$ can be assumed to be smooth in a neighborhood of $x$ by a
  construction similar to the one performed in
  Section~\ref{sec:construction_set_valued_MALA}. Since $U_{M,x}U_{M,x}^T
  M^{-1} = P_M(x)$ (where $P_M$ is the projection defined by \eqref{projection-pm}), the map $x \mapsto U_{M,x}$ allows to rephrase the function~$G_{M,x}$ in~\eqref{Gx-hmc} as
  \begin{equation}
  G_{M,x}(y) = \frac{1}{\tau} U_{M,x}U_{M,x}^T\left(y-x+\frac{\tau^2}{2} M^{-1}\nabla \overline{V}(x)\right).
    \label{gx-hmc-rewrite}
  \end{equation}
Note that the map $G_{M,x}$ seen as a function from $\Sigma$ to $T^*_x\Sigma$ does not depend on the choice of the matrix $U_{M,x}$. For a given $x \in \Sigma$, we can then verify, with an argument similar to the one used to write~\eqref{equivalent-definitions-of-cx}, that
  \begin{align}
    C_{M,x} = \left\{y\in \Sigma\,\middle|\, \det\left(\nabla\xi(y)^TM^{-1}\nabla\xi(x)\right)=0\right\} = \left\{y\in \Sigma\,\middle|\, \det\left(U^T_{M,x}M^{-1}U_{M,y}\right)=0\right\} \,.
    \label{set-c-m-x-two-expressions}
  \end{align}
  Therefore, \eqref{gx-hmc-rewrite} and the rightmost equality in \eqref{set-c-m-x-two-expressions} imply
  that $C_{M,x}$ is the set of critical points of the map $G_{M,x}$ defined in \eqref{Gx-hmc}.

  The first item of Proposition~\ref{prop-differentiability-hmc} can be obtained by applying the implicit function theorem to
  the equation $g(\bar{z},y)=0$ starting from the solution
  $g(z,x^1)=0$, where the map $g: T^*\Sigma\times \Sigma \rightarrow
  \mathbb{R}^{d-k}$, defined as $g(\bar{z},y)=U_{M,\bar{x}}^TM^{-1}(G_{M,\bar{x}}(y)-\bar{p})$ for
  $\bar{z}=(\bar{x},\bar{p})\in T^*\Sigma$ and $y \in \Sigma$, is
  $C^1$-differentiable (recall that $\bar{x}\mapsto U_{M,\bar{x}}\in \mathbb{R}^{d\times (d-k)}$ is locally $C^1$-differentiable). In fact, using \eqref{gx-hmc-rewrite}, we have 
  \begin{align*}
    g(\bar{z},y)=U_{M,\bar{x}}^TM^{-1}(G_{M,\bar{x}}(y)-\bar{p}) =
    \frac{1}{\tau} U_{M,\bar{x}}^T\left(y-\bar{x} + \frac{\tau^2}{2}
    M^{-1}\nabla \overline{V}(\bar{x})-\tau M^{-1}\bar{p}\right)\,.
  \end{align*}
  Since $x^1\in \Sigma\setminus C_{M,x}$, a simple computation shows that the differential of~$g$ in the $y$-variable
  is invertible at $(\bar{z}, y) = (z, x^1)$,
  which allows to express~$y$ as a $C^1$-differentiable function $\Upsilon_1$
  of~$\bar{z}$ with the desired properties. The uniqueness of $\Upsilon_1$ is implied by the implicit function theorem. This proves the first item.

  Concerning the second item, using the facts that 
  $\Upsilon_1(z)=x^1$ and $G_{M,\bar{x}}(\Upsilon_1(\bar{z}))=\bar{p}$ for all $\bar{z}=(\bar{x},
  \bar{p})\in \mathcal{O}$, as well as 
  \eqref{F-map-related-to-rattle}, \eqref{lambda-p-exp} and~\eqref{Gx-hmc}, 
  one can verify the claim regarding the Lagrange multiplier functions in
  \eqref{multiplier-c-explicit-phase-space} by direct computations.
    Assume that $\widetilde{\lambda}_x,
    \widetilde{\lambda}_p:\mathcal{O}'\rightarrow \mathbb{R}^k$ is another pair of Lagrange multiplier functions.
    From \eqref{F-map-related-to-rattle} we can compute (see \eqref{lambda-x-by-x1})
	$$\forall\bar{z}=(\bar{x}, \bar{p}) \in \mathcal{O}', \quad
	\widetilde{\lambda}_x(\bar{z}) = \frac{1}{\tau}
	\left[(\nabla\xi^TM^{-1}\nabla\xi)(\bar{x})\right]^{-1}\nabla\xi(\bar{x})^T\left[\widetilde{\Upsilon}_1(\bar{z}) - \bar{x} - \tau M^{-1}\bar{p} + \frac{\tau^2}{2} M^{-1} \nabla
	\overline{V}(\bar{x})\right]\,,$$
	where $\widetilde{\Upsilon}_1(\bar{z})=\Pi(F(\bar{z},
	\widetilde{\lambda}_x(\bar{z}), \widetilde{\lambda}_p(\bar{z})))$ and
	$\Pi$ is the projection in \eqref{projection-map-pi}.
	Moreover, $\widetilde{\Upsilon}_1(z)=x^1$ and
	$G_{M,\bar{x}}(\widetilde{\Upsilon}_1(\bar{z}))=\bar{p}$ for all $\bar{z}=(\bar{x}, \bar{p})\in \mathcal{O}'$.
  The uniqueness of the function $\Upsilon_1$ in the first item then implies 
  that $\widetilde{\Upsilon}_1= \Upsilon_1$ on $\mathcal{O}\cap \mathcal{O}'$,
  from which we obtain $\widetilde{\lambda}_x = \lambda_x$. The uniqueness of
  $\lambda_p$ is then a consequence of the fact that $\lambda_p$ is determined once $\lambda_x$ is given (see \eqref{lambda-p-exp}). This proves the second item.

  For the third item, note that the map $\Upsilon=F(\cdot, \lambda_x(\cdot), \lambda_p(\cdot))$, where the Lagrange multiplier functions are given in \eqref{multiplier-c-explicit-phase-space},
  is the one-step RATTLE with momentum reversal. The symplecticity of the
  RATTLE scheme (see~\cite{leimkuhler1994symplectic} and Section VII.1.4 of \cite{geometric-integration}) implies that $|\det
  D\Upsilon|\equiv 1$. Using this fact, we can find a neighborhood
  $\mathcal{O}$ of $z$ such that $\Upsilon$ is a $C^1$-diffeomorphism on~$\mathcal{O}$. Let us finally show that $\mathcal{O}$ can be chosen such that $\Upsilon(\bar{z})$ is the only element of $\mathcal{F}(\bar{z})$ on $\Upsilon(\mathcal{O})$ for all $\bar{z} \in \mathcal{O}$.
   Consider the map $G_M: \Sigma\times \Sigma\rightarrow T^*\Sigma$, defined by 
  $G_M(\bar{x},\bar{y})=(\bar{x}, G_{M,\bar{x}}(\bar{y}))$, for all $(\bar{x},\bar{y})\in \Sigma\times \Sigma$.
From the definition \eqref{Gx-hmc} of $G_{M,x}$, Assumption~\ref{assump-xi}, and the definition \eqref{projection-pm} of $P_M$, we know that $G_M$ is $C^1$-differentiable.
 Computing the differential of $G_M$ and using \eqref{gx-hmc-rewrite}--\eqref{set-c-m-x-two-expressions}, we find that $(x,x^1)$ is a regular point
 of $G_M$ since $x^1\not\in C_{M,x}$. Therefore, $G_M$ is a $C^1$-diffeomorphism (one-to-one) in a neighborhood of $(x,x^1)$.
Assume by contradiction that there is no neighborhood $\mathcal{O}$ such that $\Upsilon(\bar{z})$ is the only element of $\mathcal{F}(\bar{z})$ on
  $\Upsilon(\mathcal{O})$ for all $\bar{z} \in \mathcal{O}$. Then, we can find a sequence
  $z^{(i)}=(x^{(i)}, p^{(i)})\rightarrow z$ as $i\rightarrow +\infty$, such
  that for each $i \ge 1$ there are two different elements
  $\widetilde{z}^{(i),j}=(\widetilde{x}^{(i),j},\widetilde{p}^{(i),j})\in \mathcal{F}(z^{(i)})$ with $j=1,2$, and $\widetilde{z}^{(i),j}\rightarrow z'=(x^1,p^{1,-})$ as $i\rightarrow +\infty$, for $j=1,2$.
  In particular, for the position variable, we have 
  $\widetilde{x}^{(i),1} \neq \widetilde{x}^{(i),2}$ (since otherwise the momenta would be the same as well by~\eqref{lambda-p-exp}--\eqref{lambda-x-by-x1}) and it holds that $G_M(x^{(i)}, \widetilde{x}^{(i),j})=z^{(i)}$ for $j=1,2$, where
  $(x^{(i)},\widetilde{x}^{(i),j})\rightarrow (x,x^1)$ as $i\to+\infty$. However, this is in
  contradiction with the fact that $G_M$ is a $C^1$-diffeomorphism in a neighborhood of $(x,x^1)$. This shows the assertion in the last item.
\end{proof}

\medskip

\begin{proof}[Proof of Proposition~\ref{prop-set-f-hmc}]
The proof of Proposition~\ref{prop-set-f-hmc} is similar to the proof of Proposition~\ref{prop-set-f-mala}. 

  Let us first show that $\mathcal{N}$ is a closed set of measure zero. For a sequence $(x^{(i)},p^{(i)})$ in~$\mathcal{N}$ which converges to
  $(x,p) \in T^*\Sigma$, there exists $y^{(i)} \in C_{M,x^{(i)}}$ such
  that $p^{(i)} = G_{M,x^{(i)}}(y^{(i)})$, where $G_{M, x}$ is the map defined
  in \eqref{Gx-hmc}. Since $\Sigma$ is compact, $y^{(i)}$ converges upon
  extraction to $y\in \Sigma$. Note that $y \in C_{M,x}$, since 
  $$\det \left(\nabla\xi(y)^TM^{-1}\nabla\xi(x)\right)=\lim_{i\rightarrow +\infty}
  \det \left(\nabla\xi(y^{(i)})^TM^{-1}\nabla\xi(x^{(i)})\right) = 0,$$ and that $p=G_{M,x}(y)$. 
  This implies that $(x,p)\in \mathcal{N}$ and therefore $\mathcal{N}$ is a closed set. 
  Also, as shown in the proof of Proposition~\ref{prop-differentiability-hmc}
  (see \eqref{set-c-m-x-two-expressions}), $C_{M,x}$ is the set of critical
  points of the map $G_{M,x}$. As a result, $\mathcal{N}\subset T^*\Sigma$ has measure zero in $T^*\Sigma$, since Sard's theorem implies that
  $G_{M,x}(C_{M,x})$ has zero Lebesgue measure in $T^*_x\Sigma$ for all $x\in
  \Sigma$. 
  The equality \eqref{set-n-alternative} follows directly from the definitions
  of the set $C_{M,x}$ in~\eqref{set-cm-x} and the map $G_{M,x}$ in \eqref{Gx-hmc}.
  This proves the assertion in the first item.

  Concerning the second item, we argue similarly as in the proof of the second item of Proposition~\ref{prop-set-f-mala}.
Assume by contradiction that, for some $z=(x,p)\in T^*\Sigma\setminus \mathcal{N}$, there are infinitely many elements $z_{1}, z_{2},\ldots$ in
  $\mathcal{F}(z)$.  Denote by $y^{(i)}=\Pi(z_i)$ the position variables of $z_i$ for $i \ge 1$ (see \eqref{projection-map-pi}).
  Since $z_i \in \mathcal{F}(z)$, we have $G_{M,x}(y^{(i)})=p$ for all $i \ge 1$.
  Using the fact that the momenta $p^{1,-}$ are determined given the position
  variables $x^1$ of the output state $(x^1, p^{1,-})$ of the one-step RATTLE scheme with momentum reversal~\eqref{psi-map-by-rattle}
  (see \eqref{lambda-p-exp}--\eqref{lambda-x-by-x1}), we know that there are
  infinitely many different (position) elements in $\{y^{(1)}, y^{(2)}, \ldots\}\subset \Sigma$.
  Since $\Sigma$ is compact, we can assume (upon extracting a subsequence)
  that $\lim_{i\rightarrow +\infty} y^{(i)}=y\in \Sigma$ and, therefore,
  $G_{M,x}(y)=\lim_{i\rightarrow +\infty} G_{M,x}(y^{(i)})=p$.
  The fact that $z \in T^*\Sigma \setminus \mathcal{N}$ implies that we must have $y\not\in C_{M,x}$.
  Therefore, $G_{M,x}$ is a local $C^1$-diffeomorphism in a neighborhood of~$y$ (the set $C_{M,x}$ is the set of critical points of $G_{M,x}$; see \eqref{set-c-m-x-two-expressions}).
  However, this is in contradiction with the fact that there are infinitely many
  different elements $y^{(1)}, y^{(2)}, \ldots$ arbitrarily close to $y$ such that $G_{M,x}(y^{(i)})=p$ for all $i \ge 1$.
  This shows the assertion in the second item.

  Concerning the third item, we use the same argument as in the proof of the third item of Proposition~\ref{prop-set-f-mala}.
  Assuming that $z=(x,p)\in T^*\Sigma\setminus \mathcal{N}$ and
  $\mathcal{F}(z)=\{z_1, z_2, \ldots, z_n\}$ with $n =|\mathcal{F}(z)| \geq 1$, the third assertion of Proposition~\ref{prop-differentiability-hmc} implies that we can find a neighborhood~$\mathcal{O}$ of~$z$ and $n$ different pairs of 
  Lagrange multiplier functions $\lambda_x^{(i)}, \lambda_p^{(i)}:
  \mathcal{O}\rightarrow \mathbb{R}^k$, $1 \le i \le n$, such that
  $\Upsilon^{(i)}(\cdot)=F(\cdot, \lambda_x^{(i)}(\cdot),
  \lambda_p^{(i)}(\cdot))$ is a $C^1$-diffeomorphism from~$\mathcal{O}$ to the
  neighborhood $\Upsilon^{(i)}(\mathcal{O})$ of~$z_i$ in $T^*\Sigma$ and,
  moreover, $\Upsilon^{(i)}(\bar{z})$ is the only element of
  $\mathcal{F}(\bar{z})$ in $\Upsilon^{(i)}(\mathcal{O})$ for all $\bar{z}\in \mathcal{O}$ and $1 \le i \le n$.
  Since $z_1, z_2, \ldots, z_n$ are $n$ different elements, by shrinking the neighborhood $\mathcal{O}$ if necessary,  
  we can assume that $\Upsilon^{(i)}(\mathcal{O})\cap
  \Upsilon^{(i')}(\mathcal{O})=\emptyset$ for $1 \le i\neq i' \le n$.
  This shows that $\mathcal{F}(\bar{z})$ contains at least $n$ elements for
  all $\bar{z}\in \mathcal{O}$. 
  In particular, for the state $z$, we have $\mathcal{F}(z)=\{z_1, z_2, \ldots, z_n\}\subset \cup_{i=1}^n \Upsilon^{(i)}(\mathcal{O})$.
   Assume by contradiction that there are elements $z^{(j)}=(x^{(j)}, p^{(j)})\in \mathcal{O}$ such
   that $\lim_{j\rightarrow+\infty} z^{(j)}=z$ and $\mathcal{F}(z^{(j)})$ has
   more than $n$ elements for $j \ge 1$.  
   Then, we can find $\widetilde{z}^{(j)}= (\widetilde{x}^{(j)},
   \widetilde{p}^{(j)}) \in \mathcal{F}(z^{(j)})\setminus\{ 
   \Upsilon^{(1)}(z^{(j)}),\ldots,\Upsilon^{(n)}(z^{(j)})\}$ such that
   $\widetilde{z}^{(j)} \not\in\cup_{i=1}^n \Upsilon^{(i)}(\mathcal{O})$
   (since $\Upsilon^{(i)}(z^{(j)})$ is the only element of $\mathcal{F}(z^{(j)})$ in $\Upsilon^{(i)}(\mathcal{O})$).
   We also have $G_{M, x^{(j)}}(\widetilde{x}^{(j)}) = p^{(j)}$ for $j \ge 1$.
   Since $\Sigma$ is compact, we can assume upon extracting a subsequence that $\lim_{j\rightarrow +\infty} \widetilde{x}^{(j)} = \widetilde{x}$.
  Using the fact that the momenta are determined (continuously) by the position variables in the output of
  the one-step RATTLE scheme with momentum reversal \eqref{psi-map-by-rattle} (see \eqref{lambda-p-exp}--\eqref{lambda-x-by-x1}),
  we have that $\lim_{j\rightarrow +\infty}
  \widetilde{z}^{(j)}=(\widetilde{x}, \widetilde{p}) \not\in\cup_{i=1}^n \Upsilon^{(i)}(\mathcal{O})$, for some momenta $\widetilde{p}$.
   This, together with the convergence of $(x^{(j)}, p^{(j)})$ to $(x,p)$,
   imply that $G_{M,x}(\widetilde{x})=p$ and, therefore, $(\widetilde{x}, \widetilde{p}) \in \mathcal{F}(z)$.
   However, this is in contradiction with the fact that there are only $n$
   elements in $\mathcal{F}(z)$ and that $\mathcal{F}(z)=\{z_1, z_2, \ldots, z_n\}\subset \cup_{i=1}^n \Upsilon^{(i)}(\mathcal{O})$.  This proves the assertion in the third item.

  Concerning the fourth item, the third item above implies that the subsets $\mathcal{B}_{i}$ are open for $i \geq 1$.
  By proceeding in the same way as at the end of the proof of Proposition~\ref{prop-set-f-mala}, it is easy to show that $\mathcal{B}_{0}$ is open, and that $\mathcal{N}$, $\mathcal{B}_{0}$, $\mathcal{B}_{1}, \ldots$ are disjoint subsets which form a partition of~$T^*\Sigma$. 
\end{proof}

\medskip

\begin{proof}[Proof of Lemma~\ref{lemma-connection-reversibility-and-rmr}]
  Denote by $\widetilde{q}(z,dz')$ the transition probability kernel of the Markov chain~$\widetilde{\mathscr{C}}$.
  Then, $\widetilde{q}(z,\cdot)=\mathcal{R}_\#(q(z,\cdot))$ for $z\in T^*\Sigma$, where
  $\mathcal{R}_\#(q(z,\cdot))$ denotes the push-forward probability measure of $q(z,\cdot)$ by the involution map $\mathcal{R}$.
Assume that the Markov chain $\mathscr{C}$ is reversible. Using the definition
  of the push-forward measures~(see \cite[Section~5.2]{ambrosio2005gradient}), as well as
  $\mathcal{R}\circ\mathcal{R}=\mbox{id}$, it holds, for any bounded measurable function $f: T^*\Sigma \times T^*\Sigma \rightarrow \mathbb{R}$,
  \begin{equation*}
    \begin{aligned}
    \int_{T^*\Sigma\times T^*\Sigma} f(z,z')\, \widetilde{q}(z,dz')\,\mu(dz) & = \int_{T^*\Sigma\times T^*\Sigma} f(z,z')\, \mathcal{R}_\#(q(z,\cdot))(dz')\,\mu(dz) \\
      & = \int_{T^*\Sigma\times T^*\Sigma} f(z,\mathcal{R}(z'))\, q(z,dz')\,\mu(dz) \\
      & = \int_{T^*\Sigma\times T^*\Sigma} f(z', \mathcal{R}(z))\, q(z,dz')\,\mu(dz) \\
      & = \int_{T^*\Sigma\times T^*\Sigma} f(\mathcal{R}(z'), \mathcal{R}(z))\, \mathcal{R}_\#(q(z,\cdot))(dz')\,\mu(dz) \\
      & = \int_{T^*\Sigma\times T^*\Sigma} f(\mathcal{R}(z'),\mathcal{R}(z))\,
    \widetilde{q}(z,dz')\,\mu(dz) \,,
    \end{aligned}
  \end{equation*}
  which shows that the Markov chain $\widetilde{\mathscr{C}}$ is reversible up to momentum reversal.
  The proof of the converse statement (i.e.\ $\mathscr{C}$ is reversible
  if $\widetilde{\mathscr{C}}$ is reversible up to momentum reversal) is similar and is therefore omitted.
\end{proof}

\medskip

\begin{proof}[Proof of Theorem~\ref{thm-algo-2-rattle}]
  Let us denote by $\mathscr{C}$ the Markov chain $(z^{(i)})_{i\ge 0}$
  generated by Algorithm~\ref{algo-phase-space-mc-rattle}, and by $q(z,dz')$ its transition probability kernel.
  We prove that $\mathscr{C}$ is reversible up to momentum reversal by
  considering it as the composition of the transitions (in Algorithm~\ref{algo-phase-space-mc-rattle}) from $z^{(i)}$ to
  $z^{(i+\frac{1}{4})}$, then from $z^{(i+\frac{1}{4})}$ to $z^{(i+\frac{3}{4})}$, and finally from $z^{(i+\frac{3}{4})}$ to $z^{(i+1)}$.

  Denote by $q_1$, $q_2$, $\widetilde{q}_2$ 
  the transition probability kernels which correspond to the transitions from
  $z^{(i)}$ to $z^{(i+\frac{1}{4})}$, from $z^{(i+\frac{1}{4})}$ to
  $z^{(i+\frac{2}{4})}$, and from $z^{(i+\frac{1}{4})}$ to
  $z^{(i+\frac{3}{4})}$, respectively. Denote also by $\mathscr{C}_1$,
  $\mathscr{C}_2$, $\widetilde{\mathscr{C}}_2$ the corresponding Markov
  chains. The transition probability kernel~$q$ of the whole Markov chain~$\mathscr{C}$ is 
  then obtained as the composition of the transition probability kernels of the Markov chains~$\mathscr{C}_1$,
  $\widetilde{\mathscr{C}}_2$, $\mathscr{C}_1$, i.e.\
  \begin{equation}
    q(z,dz') = \int_{(z_1,z_2) \in T^*\Sigma\times T^*\Sigma}
    q_1(z,dz_1)\,\widetilde{q}_2(z_1, dz_2)\, q_1(z_2, dz')\,, \quad z,z'\in
    T^*\Sigma\,.
    \label{q-by-q1-q2-q1}
  \end{equation}
  Recall the definition of reversibility in Definition~\ref{def-reversible} and the definition of reversibility up to momentum reversal in Definition~\ref{def-rmr}.
  We state the following two claims (C1)--(C2):
\begin{enumerate}
  \item[(C1)]
    The Markov chain $\mathscr{C}_1$ is both reversible and reversible up to momentum reversal with respect to~$\mu$.
  \item[(C2)]
  The Markov chain $\widetilde{\mathscr{C}}_2$ is reversible up to momentum reversal with respect to $\mu$.
\end{enumerate}
Also, since $\widetilde{\mathscr{C}}_2$ is the Markov chain $\mathscr{C}_2$ followed by momentum reversal,
Lemma~\ref{lemma-connection-reversibility-and-rmr} implies that Claim~(C2) is equivalent to the following claim:
  \begin{enumerate}
    \item[(C2')] The Markov chain $\mathscr{C}_2$ is reversible with respect to $\mu$.
  \end{enumerate}

  The fact that the Markov chain~$\mathscr{C}$ is reversible up
  to momentum reversal with respect to $\mu$ is a consequence of Claims (C1)--(C2).
Indeed, using \eqref{q-by-q1-q2-q1}, Fubini's theorem, and the invariance of~$\mu$ under~$\mathcal{R}$,
  we obtain, for any bounded measurable function $f: T^*\Sigma \times T^*\Sigma \rightarrow \mathbb{R}$, 
    \begin{align*}
  & \int_{T^*\Sigma\times T^*\Sigma} f(z,z')\, q(z,dz')\,\mu(dz) \\
  & \qquad = \int_{T^*\Sigma\times T^*\Sigma\times T^*\Sigma\times T^*\Sigma} f(z,z')\, q_1(z,dz_1)\, \widetilde{q}_2(z_1, dz_2)\, q_1(z_2,dz') \,\mu(dz) \\
  & \qquad = \int_{T^*\Sigma\times T^*\Sigma\times T^*\Sigma\times T^*\Sigma}
	f(\mathcal{R}(z_1),z')\, q_1(z,dz_1)\, \widetilde{q}_2(\mathcal{R}(z), dz_2)\, q_1(z_2,dz') \,\mu(dz) \\
  & \qquad = \int_{T^*\Sigma\times T^*\Sigma\times T^*\Sigma\times T^*\Sigma}
	f(\mathcal{R}(z_1),z')\, q_1(\mathcal{R}(z),dz_1)\, \widetilde{q}_2(z, dz_2)\, q_1(z_2,dz') \,\mu(dz) \\
  & \qquad = \int_{T^*\Sigma\times T^*\Sigma\times T^*\Sigma\times T^*\Sigma}
	f(\mathcal{R}(z_1),z')\, q_1(z_2,dz_1)\, \widetilde{q}_2(z, dz_2)\,
	q_1(\mathcal{R}(z),dz') \,\mu(dz) \\
  & \qquad = \int_{T^*\Sigma\times T^*\Sigma\times T^*\Sigma\times T^*\Sigma}
	f(\mathcal{R}(z_1),z')\, q_1(z_2,dz_1)\, \widetilde{q}_2(\mathcal{R}(z), dz_2)\,
	q_1(z,dz') \,\mu(dz) \\
  & \qquad = \int_{T^*\Sigma\times T^*\Sigma\times T^*\Sigma\times T^*\Sigma}
	f(\mathcal{R}(z_1),\mathcal{R}(z))\, q_1(z_2,dz_1)\, \widetilde{q}_2(z', dz_2)\,
	q_1(z,dz') \,\mu(dz) \\
  & \qquad = \int_{T^*\Sigma\times T^*\Sigma}
	f(\mathcal{R}(z'),\mathcal{R}(z))\, q(z, dz') \,\mu(dz) \,,
    \end{align*}
  where we applied successively: the reversibility up to momentum reversal on
  $(z,z_1)$ (thanks to Claim~(C1)); a change of variable from $z$ to $\mathcal{R}(z)$; 
  the reversibility up to momentum reversal on $(z,z_2)$ (thanks to~Claim (C2)); 
  a change of variable from $z$ to $\mathcal{R}(z)$; 
  the reversibility up to momentum reversal on $(z,z')$ (thanks to~Claim (C1)). 

  Let us now conclude by proving Claim (C1) and Claim (C2'); starting with~(C1). Note that, in view of~\eqref{eq:kappa} and~\eqref{update-momenta}, it holds, for $z=(x,p)$ and $z'=(x', p')$ in~$T^*\Sigma$,
\begin{equation}
  q_1(z, dz')=\left(\frac{2\pi(1-\alpha^2)}{\beta}\right)^{-\frac{d-k}{2}} \exp
  \left(-\frac{\beta (p'-\alpha p)^T M^{-1} (p'-\alpha p)}{2(1-\alpha^2)}\right)\sigma^{M^{-1}}_{T^*_x
  \Sigma}(dp')\,\delta_x(dx')\,.
  \label{kernel-q1}
\end{equation}
  Therefore, using \eqref{eq:mu_tensor}, \eqref{eq:kappa} and \eqref{kernel-q1}, we can compute
\begin{equation}
  \label{q1-times-mu}
  \begin{aligned}
    q_1(z,dz')\,\mu(dz) & = \left(2\pi\beta^{-1}\right)^{-(d-k)} 
    (1-\alpha^2)^{-\frac{d-k}{2}}
    \exp \left(-\frac{\beta (\langle p', p'\rangle_{M^{-1}} -2\alpha \langle
    p,p'\rangle_{M^{-1}} + \langle p, p\rangle_{M^{-1}})}{2(1-\alpha^2)}\right)\\
    & \qquad \times \sigma^{M^{-1}}_{T^*_x \Sigma}(dp')\, \sigma^{M^{-1}}_{T^*_x
    \Sigma}(dp)\,\delta_x(dx')\,\nu_{\Sigma}^M(dx)\,,
\end{aligned}
\end{equation}
  where $\nu_{\Sigma}^M$ is defined in~\eqref{eq:def_nu_M} and  $\langle \cdot,\cdot \rangle_{M^{-1}}$ is defined in~\eqref{eq:scalar_product_M}.
  Using \eqref{q1-times-mu}, it is straightforward to verify that, for any
  bounded measurable function $f: T^*\Sigma\times T^*\Sigma \rightarrow \mathbb{R}$,  
\begin{equation}
  \begin{aligned}
  \int_{T^*\Sigma\times T^*\Sigma} f(z,z')\, q_1(z,dz')\,\mu(dz) & = \int_{T^*\Sigma\times T^*\Sigma} f(z',z)\, q_1(z,dz')\,\mu(dz) \\
   & = \int_{T^*\Sigma\times T^*\Sigma} f(\mathcal{R}(z'),\mathcal{R}(z))\, q_1(z,dz')\,\mu(dz) \,.
  \end{aligned}
\end{equation}
This shows that $\mathscr{C}_1$ is both reversible and reversible up to momentum reversal with respect to~$\mu$. Therefore, Claim (C1) is proved.

    It remains to prove Claim (C2'). According to Algorithm~\ref{algo-phase-space-mc-rattle}, we have
\begin{equation}
  \begin{aligned}
    q_2(z, dz') =&\sum_{j=1}^{|\Phi(z)|} \omega(z_j\,|\,z)\,
    \mathbf{1}_{\mathcal{D}_\Phi}(z,z_j)\,a(z_j\,|\,z)\,\delta_{z_j}(dz') \\
    &+ \left[1- \sum_{j=1}^{|\Phi(z)|} \omega(z_j\,|\,z)\,
    \mathbf{1}_{\mathcal{D}_\Phi}(z,z_j)\,a(z_j\,|\,z)\right] \delta_{z}(dz')\,, 
  \end{aligned}
    \label{q-2nd-step-phase-space}
  \end{equation}
  where $\mathcal{D}_\Phi$ is the set defined in \eqref{admissible-set-d-r}, $\Phi(z)=\{z_1, \ldots, z_n\}$, with $n=|\Phi(z)|$, $a(\cdot\,|\,z)$ is
  the acceptance probability defined by~\eqref{rate-phase-space-rattle}, and $\delta_z$ the Dirac measure centered at~$z \in T^*\Sigma$.
  Here and in the following, we adopt the convention that $\sum_{j=1}^{0} \cdot = 0$.
Claim (C2') is equivalent to the fact that, for any bounded measurable function $f: T^*\Sigma\times T^*\Sigma\rightarrow \mathbb{R}$,
  \begin{align}
    \int_{T^*\Sigma\times T^*\Sigma} f(z,z')\,q_2(z,dz')\,\mathrm{e}^{-\beta H(z)} \sigma_{T^*\Sigma}(dz) =
    \int_{T^*\Sigma\times T^*\Sigma} f(z',z)\,q_2(z,dz')\,\mathrm{e}^{-\beta H(z)} \sigma_{T^*\Sigma}(dz) \,.
    \label{q-does-not-change-rho}
  \end{align}
  Using \eqref{q-2nd-step-phase-space}, the integral on the left hand side
  above can be written as $I_1 + I_2$, with 
  \[
    \begin{aligned}
      I_1 & = \int_{T^*\Sigma}\,  \left[\sum_{z'\in \Phi(z)}
      \omega(z'\,|\,z)\,\mathbf{1}_{\mathcal{D}_\Phi}(z, z')\,a(z'\,|\,z)\,
      f(z, z')\,\right]\,\mathrm{e}^{-\beta H(z)}\,\sigma_{T^*\Sigma}(dz)\,, \\
    I_2 & = \int_{T^*\Sigma}\,
      \left[1-\sum_{z'\in \Phi(z)}\omega(z'\,|\,z)\,\mathbf{1}_{\mathcal{D}_\Phi}(z,z')\,a(z'\,|\,z)\right]\,f(z,z)\, \mathrm{e}^{-\beta H(z)}\,\sigma_{T^*\Sigma}(dz)\,.
  \end{aligned}
  \]
  The expression~\eqref{rate-phase-space-rattle} of the acceptance probability implies that
  \[
      I_1  = \int_{T^*\Sigma}\left[\sum_{z'\in \Phi(z)}\min\Big\{
      \omega\left( z'\,|\,z\right)\,\mathrm{e}^{-\beta H(z)},\,
    \omega\left(z\,|\,z'\right) \mathrm{e}^{-\beta H(z')} \,\Big\}\,
  \mathbf{1}_{\mathcal{D}_\Phi}\left(z,z'\right)
f(z, z')\right] \,\sigma_{T^*\Sigma}(dz)\,. 
  \]
  In the same way, the integral on the right hand side of \eqref{q-does-not-change-rho} can be written as the sum
  of $\widetilde{I}_1$ and~$I_2$, where 
   \[
     \widetilde{I}_1  = \int_{T^*\Sigma}\left[\sum_{z'\in \Phi(z)}\min\Big\{
      \omega\left( z'\,|\,z\right)\,\mathrm{e}^{-\beta H(z)},\,
    \omega\left(z\,|\,z'\right) \mathrm{e}^{-\beta H(z')} \,\Big\}\,
  \mathbf{1}_{\mathcal{D}_\Phi}\left(z,z'\right)
f(z', z)\right] \,\sigma_{T^*\Sigma}(dz)\,.
  \]
Therefore, it suffices to prove that $I_1=\widetilde{I}_1$.

To proceed, recall that $\Pi$ is the projection map defined in \eqref{projection-map-pi}.
Let us introduce the set
  \begin{equation}
  \begin{aligned}
    \widetilde{\mathcal{D}}_\Phi = \Big\{(z,z')\in T^*\Sigma\times T^*\Sigma~\Big|&\,z'\in \Phi(z)\,,~z\in
    \Phi(z')\,,\\
    &~\det\left[\nabla\xi(\Pi(z'))^TM^{-1}\nabla\xi(\Pi(z))\right]\neq 0 ~\Big\}\,,
  \end{aligned}
  \label{admissible-set-d-r-subset}
\end{equation}
  which is a subset of the set~$\mathcal{D}_\Phi$ defined in
  \eqref{admissible-set-d-r}. It plays a role similar to the set $\widetilde{\mathcal D}$ defined in~\eqref{d-bar-set-mala}. 
  
  Note that, if $z'\in\Phi(z)$ and $\Pi(z')\in C_{M, \Pi(z)}$, then $z \in
  \mathcal{N}$. Therefore,
  $\mathbf{1}_{\mathcal{D}_\Phi}(z,z')=\mathbf{1}_{\widetilde{\mathcal{D}}_\Phi}(z,z')$,
  up to elements~$z$ in the zero measure set $\mathcal{N}$ (see
  Proposition~\ref{prop-set-f-hmc}). Using this fact, we get
  \begin{equation}
    I_1 = \int_{T^*\Sigma}\left[\sum_{z'\in \Phi(z)}\min\Big\{
      \omega\left( z'\,|\,z\right)\,\mathrm{e}^{-\beta H(z)},\,
    \omega\left(z\,|\,z'\right) \mathrm{e}^{-\beta H(z')} \,\Big\}\,
  \mathbf{1}_{\widetilde{\mathcal{D}}_\Phi}\left(z,z'\right) f(z, z')\right]
    \,\sigma_{T^*\Sigma}(dz)\,. 
    \label{formula-of-i1}
  \end{equation}

Although the maps $\Phi^{(j)}$ in Assumption~\ref{assump-phi-phase-space} are only locally defined, a localization argument similar to the one used in the proof of Lemma~\ref{lemma-on-push-forward-by-psi} allows us to derive in Lemma~\ref{lemma-change-of-variable-phi} below a formula for a change of variables involving the maps~$\Phi^{(j)}$. Applying Lemma~\ref{lemma-change-of-variable-phi}, we obtain from \eqref{formula-of-i1} that 
\begin{align*}
  I_1 =& \int_{T^*\Sigma}\left[\sum_{z'\in \Phi(z)}\min\Big\{
      \omega\left( z\,|\,z'\right)\,\mathrm{e}^{-\beta H(z')},\,
    \omega\left(z'\,|\,z\right) \mathrm{e}^{-\beta H(z)} \,\Big\}\,
  \mathbf{1}_{\widetilde{\mathcal{D}}_\Phi}\left(z,z'\right)
f(z',z)\right] \,\sigma_{T^*\Sigma}(dz) \\
=& \int_{T^*\Sigma}\left[\sum_{z'\in \Phi(z)}\min\Big\{
      \omega\left( z'\,|\,z\right)\,\mathrm{e}^{-\beta H(z)},\,
    \omega\left(z\,|\,z'\right) \mathrm{e}^{-\beta H(z')} \,\Big\}\,
  \mathbf{1}_{\mathcal{D}_\Phi}\left(z,z'\right)
f(z',z)\right] \,\sigma_{T^*\Sigma}(dz) \\
     =&\, \widetilde{I}_1\,,
  \end{align*}
  where we used again the fact that $\mathbf{1}_{\mathcal{D}_\Phi}(z,z')=\mathbf{1}_{\widetilde{\mathcal{D}}_\Phi}(z,z')$,
  except for~$z$ in the zero measure set $\mathcal{N}$ (see Proposition~\ref{prop-set-f-hmc}), to derive the second equality.
  This completes the proof of Claim (C2').
\end{proof}

Finally, we present the proof of the following change of variables formula used in the proof of Theorem~\ref{thm-algo-2-rattle} above.
\begin{lemma}
  Let $\widetilde{\mathcal{D}}_\Phi$ be the set defined in 
  \eqref{admissible-set-d-r-subset}. For any bounded measurable function
  $f:T^*\Sigma\times T^*\Sigma \rightarrow~\mathbb{R}$, 
  \begin{equation}
       \int_{T^*\Sigma}\left[\sum_{z'\in \Phi(z)}
     \mathbf{1}_{\widetilde{\mathcal{D}}_\Phi}\big(z,z'\big)
     f\left(z, z'\right) \right]
     \,\sigma_{T^*\Sigma}(dz) = \int_{T^*\Sigma} \left[\sum_{z'\in \Phi(z)}
      \mathbf{1}_{\widetilde{\mathcal{D}}_\Phi}\big(z,z'\big)
     f\left(z', z\right) \right] \,\sigma_{T^*\Sigma}(dz)\,.
    \label{change-of-variables-for-phi}
  \end{equation}
  \label{lemma-change-of-variable-phi}
\end{lemma}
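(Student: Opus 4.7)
\textbf{Proof plan for Lemma~\ref{lemma-change-of-variable-phi}.}

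The plan is to reduce~\eqref{change-of-variables-for-phi} to a local change of variables on each branch of the set-valued map~$\Phi$, exploiting the fact that one step of RATTLE with momentum reversal is volume preserving and that Lemma~\ref{lemma-1} provides an involutive structure between $z$ and $z'\in\Phi(z)$.

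First, observe that the indicator $\mathbf{1}_{\widetilde{\mathcal{D}}_\Phi}(z,z')$ forces $\Pi(z')\notin C_{M,\Pi(z)}$, so by Proposition~\ref{prop-set-f-hmc} only states $z\in T^*\Sigma\setminus \mathcal{N}$ contribute (the remainder has zero measure). For such $z$, Assumption~\ref{assump-phi-phase-space}(3) combined with Proposition~\ref{prop-differentiability-hmc} shows that every qualifying $z'\in\Phi(z)$ arises as~$z'=\Phi^{(j)}(z)$ for some local $C^1$-diffeomorphism $\Phi^{(j)}:\mathcal{O}_{z,j}\to\Phi^{(j)}(\mathcal{O}_{z,j})$, and that this map coincides with the one-step RATTLE with momentum reversal $\Upsilon=F(\cdot,\lambda_x(\cdot),\lambda_p(\cdot))$, hence satisfies $|\det D\Phi^{(j)}|\equiv 1$ by symplecticity. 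Moreover, after possibly shrinking the neighborhoods, Lemma~\ref{lemma-1} and the discussion following Proposition~\ref{prop-differentiability-hmc} ensure that $(\Phi^{(j)})^{-1}$ is itself a local branch of $\Phi$ near~$z'$, so that the map $z\mapsto\Phi^{(j)}(z)$ bijectively sends pairs $(z,\Phi^{(j)}(z))\in\widetilde{\mathcal{D}}_\Phi$ to pairs $(\Phi^{(j)}(z),z)\in\widetilde{\mathcal{D}}_\Phi$.

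Next, I would stratify $T^*\Sigma\setminus\mathcal{N}$ according to the open strata $\mathcal{B}_i$ of Proposition~\ref{prop-set-f-hmc}(4) and, on each such stratum, construct a countable open cover by small neighborhoods on which all the qualifying branches~$\Phi^{(1)},\ldots,\Phi^{(m)}$ are simultaneously defined and injective. Using a subordinate partition of unity and a dominated-convergence argument exactly as in the proof of Lemma~\ref{lemma-on-push-forward-by-psi}, each local contribution to the left-hand side of~\eqref{change-of-variables-for-phi} can be rewritten via the change of variables $w=\Phi^{(j)}(z)$. Since $|\det D\Phi^{(j)}|=1$ and $\sigma_{T^*\Sigma}$ is the Liouville measure (which is preserved by symplectic diffeomorphisms), this turns $f(z,\Phi^{(j)}(z))\,\sigma_{T^*\Sigma}(dz)$ into $f((\Phi^{(j)})^{-1}(w),w)\,\sigma_{T^*\Sigma}(dw)$. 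Reinterpreting $(\Phi^{(j)})^{-1}(w)$ as a branch of~$\Phi$ at~$w$ using the involutive property recalled above, and summing over all branches and pieces of the cover, produces exactly the right-hand side of~\eqref{change-of-variables-for-phi}.

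The main obstacle will be the bookkeeping in the localization step. The cardinality $|\Phi(z)|$ is only locally constant across the strata $\mathcal{B}_i$, the branches $\Phi^{(j)}$ are only defined locally on $T^*\Sigma$, and one must consistently match each branch at $z$ with the corresponding branch at $\Phi^{(j)}(z)$ so that summing over~$j$ on one side reproduces the correct sum on the other side. This is precisely the issue already handled (in the state-space setting) in the proof of Lemma~\ref{lemma-on-push-forward-by-psi}, and can be transposed to the phase-space setting using Lemma~\ref{lemma-1}; once the indexing is set up consistently, the unit-Jacobian property from symplecticity reduces the identity~\eqref{change-of-variables-for-phi} to a routine computation.
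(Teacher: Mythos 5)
Your plan follows essentially the same route as the paper's proof: localization by a partition of unity as in Lemma~\ref{lemma-on-push-forward-by-psi}, the unit-Jacobian change of variables along each local branch coming from the symplecticity of the RATTLE step, the involutive matching of branches via Lemma~\ref{lemma-1} restricted to the symmetric set $\widetilde{\mathcal{D}}_\Phi$, and a dominated-convergence limit to remove the cutoff. The only notable difference is that the paper stratifies by the number $m(z)$ of elements of $\Phi(z)$ whose position avoids $C_{M,\Pi(z)}$ rather than by the sets $\mathcal{B}_i$ counting $|\mathcal{F}(z)|$; it is the former quantity that must be locally constant for the branch-by-branch bookkeeping you correctly identify as the remaining work.
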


\begin{proof}
  For each $z\in T^*\Sigma$, there are $m$ elements $z_1, \cdots, z_m$ in~$\Phi(z)$, where
  $0 \le m \le |\Phi(z)|$, such that $\Pi(z_j)\in \Sigma\setminus C_{M,\Pi(z)}$ for $j \le m$. 
  In other words, $m= \big|\{z'\in \Phi(z)\,|\, \Pi(z')\not\in C_{M,\Pi(z)}\}\big|$.
  We write~$m(z)$ in the sequel in order to emphasize the dependence of this integer on~$z$. Define the sets $\widetilde{\mathcal{B}}_l=\{z\in T^*\Sigma\,|\,m(z)=l\}$, for $l\ge 0$.
  Clearly, the family $(\widetilde{\mathcal{B}}_l)_{l\ge 0}$ is pairwise
  disjoint and we have $T^*\Sigma=\cup_{l\ge 0} \widetilde{\mathcal{B}}_l$.
  Using the same argument as in the proof of Proposition~\ref{prop-set-f-hmc},
  we can show that the set~$\widetilde{\mathcal{B}}_l$ is open for $l \ge 1$.
  Therefore, $\widetilde{\mathcal{B}}_0= T^*\Sigma\setminus (\cup_{l\ge 1}
  \widetilde{\mathcal{B}}_l)$ is a closed subset of $T^*\Sigma$.
Note that, for $z,z'\in T^*\Sigma$ with $z=(x,p)$, the condition $z'\in \Phi(z)$ implies that $p=G_{M,x}(\Pi(z'))$ (see the discussion before~\eqref{Gx-hmc}). Therefore, $\widetilde{\mathcal{B}}_l \subset \{(x,G_{M,x}(y)), \ (x,y)\in \Sigma\times\Sigma\}$ for any $l \geq 1$. The compactness of $\Sigma$ and the continuity
  of the map~$(x,y)\mapsto G_{M,x}(y)$ then guarantee that $\cup_{l\ge 1} \widetilde{\mathcal{B}}_l$ is bounded.  In particular, the supports of the integrands on both sides of
  \eqref{change-of-variables-for-phi} are contained in the bounded subset
  $\cup_{l\ge 1} \widetilde{\mathcal{B}}_l$ (for $z\in
  \widetilde{\mathcal{B}}_0$, we have $m(z)=0$ and $\mathbf{1}_{\widetilde{\mathcal{D}}_\Phi}(z,z') =0$ for all $z'\in \Phi(z)$).
  Since \eqref{change-of-variables-for-phi} holds obviously when $\cup_{l\ge 1} \widetilde{\mathcal{B}}_l=\emptyset$, in the following we assume that $\cup_{l\ge 1} \widetilde{\mathcal{B}}_l\neq \emptyset$.

  Similarly to the proof of Lemma~\ref{lemma-on-push-forward-by-psi}, we start
  by constructing a partition of unity of the space~$T^*\Sigma$. 
For each $z \in \widetilde{\mathcal{B}}_l$, where $l \ge 1$, 
since $\widetilde{\mathcal{B}}_l$ is open, we can find a neighborhood $\mathcal{O}_z$ of $z$ and $l$ $C^1$-diffeomorphisms
  $\Phi^{(j)}$, $1 \le j \le l$, provided by Assumption~\ref{assump-phi-phase-space}, such
  that $\mathcal{O}_z \subseteq \widetilde{\mathcal{B}}_l$. In particular,
  $m(\bar{z})=l$ for all $\bar{z}\in \mathcal{O}_z$. 
  Since the images $\Phi^{(j)}(z)$ are different for different indices $j$, 
  by shrinking the neighborhood $\mathcal{O}_z$,
   we can assume without loss of generality that $\Phi^{(j)}(\mathcal{O}_z) \cap
  \Phi^{(j')}(\mathcal{O}_z) = \emptyset$, for $1 \le j \neq j' \le l$.
  Consider the map $\Upsilon^{(j)}$ given by the third item of
  Proposition~\ref{prop-differentiability-hmc}, which is a $C^1$-diffeomorphism from
  a neighborhood $\mathcal{O}_{z}'$ of $z$ to the neighborhood 
  $\Upsilon^{(j)}(\mathcal{O}_z')$ of $\Phi^{(j)}(z)$.  Using the fact that
  $\Upsilon^{(j)}(\bar{z})$ is the only element of $\mathcal{F}(\bar{z})$ such
  that $\Upsilon^{(j)}(\bar{z}) \in \Upsilon^{(j)}(\mathcal{O}_z')$ for all
  $\bar{z}\in \mathcal{O}_z'$ (see Proposition~\ref{prop-differentiability-hmc}), we can verify that 
  $\Phi^{(j)}(\bar{z}) =
  \Upsilon^{(j)}(\bar{z})$ for all $\bar{z}$ in the neighborhood $\mathcal{O}_z\cap (\Phi^{(j)})^{-1}(\Upsilon^{(j)}(\mathcal{O}_z'))$ of $z$. For simplicity, we reuse the notation and denote again by $\mathcal{O}_z$ the neighborhood of $z$ such that $\Phi^{(j)}(\bar{z}) = \Upsilon^{(j)}(\bar{z})$, for all $\bar{z} \in \mathcal{O}_z$ and for $1 \le j \le l$. 
 The discussion above implies that 
  \begin{equation}
    \forall~\bar{z} \in \mathcal{O}_z, \quad \Big\{z'\in \Phi(\bar{z})\,\Big|\, \Pi(z')\not\in C_{M,\Pi(\bar{z})}\Big\} = 
    \Big\{\Phi^{(1)}(\bar{z}), \ldots, \Phi^{(l)}(\bar{z})\Big\} = 
    \Big\{\Upsilon^{(1)}(\bar{z}), \ldots, \Upsilon^{(l)}(\bar{z})\Big\}\,.
    \label{good-states-by-phi-and-Upsilon}
  \end{equation}
  Note that both $\Phi^{(j)}$ and $\Upsilon^{(j)}$ depend on the basis point $z$, although this is not made explicit in the notation. 

  For any $\epsilon>0$, 
  the set $\widetilde{\mathcal{B}}_0^\epsilon:= \{z\in T^*\Sigma\,|\,
  d_{T^*\Sigma}(z, \widetilde{\mathcal{B}}_0)<\epsilon\}$, where $d_{T^*\Sigma}$ denotes the Riemannian distance on $T^*\Sigma$,
  is a neighborhood of $\widetilde{\mathcal{B}}_0$, such that 
  \begin{equation}
 \forall\, 0 < \epsilon
    < \epsilon', \quad \widetilde{\mathcal{B}}_0\subseteq \widetilde{\mathcal{B}}^\epsilon_0
    \subseteq \widetilde{\mathcal{B}}^{\epsilon'}_0\,, \qquad \mbox{and} \qquad \bigcap_{\epsilon>0} \widetilde{\mathcal{B}}^{\epsilon}_0 = \widetilde{\mathcal{B}}_0\,,
      \label{properties-of-sets-b-eps-hmc}
  \end{equation}
  where the second identity holds because $\widetilde{\mathcal{B}}_0$ is a closed set. Let us now fix $\epsilon>0$. 
  Since $\cup_{l\ge 1} \widetilde{\mathcal{B}}_l$ is bounded and $\widetilde{\mathcal{B}}_0^\epsilon$ is open,
  $T^*\Sigma\setminus \widetilde{\mathcal{B}}_0^\epsilon
  \subset \cup_{l\ge 1} \widetilde{\mathcal{B}}_l$ is a compact subset of~$T^*\Sigma$. The union of the neighborhoods~$\mathcal{O}_z$ in~\eqref{good-states-by-phi-and-Upsilon} for
  $z\in \cup_{l\ge 1} \widetilde{\mathcal{B}}_l$ forms an open covering of $T^*\Sigma\setminus \widetilde{\mathcal{B}}_0^\epsilon$. 
  Since $T^*\Sigma\setminus \widetilde{\mathcal{B}}_0^\epsilon$ is compact, there exists a
finite subcovering, which we denote by $\mathcal{O}^{(1)}, \ldots, \mathcal{O}^{(n)}$, $n \ge 1$.
  We define $\mathcal{O}^{(0)}=\widetilde{\mathcal{B}}_0^\epsilon$. It is then clear that 
$\mathcal{O}^{(0)}, \mathcal{O}^{(1)}, \ldots, \mathcal{O}^{(n)}$ form a
finite open covering of the entire phase space $T^*\Sigma$. Let $\rho_0,
\rho_1, \ldots, \rho_n: T^*\Sigma\rightarrow [0,1]$ be a partition of unity
  corresponding to this open covering $\mathcal{O}^{(0)}, \mathcal{O}^{(1)}, \ldots, \mathcal{O}^{(n)}$,
such that the support of the continuous function $\rho_i$ is contained in~$\mathcal{O}^{(i)}$ and that $\sum_{i=0}^{n}\rho_i(z)=1$
for all $z \in T^*\Sigma$.

  Next, for $1\le i \le n$, we denote by $m_i$ the number $m(z)$ for $z\in
  \mathcal{O}^{(i)}$, since it is constant on~$\mathcal{O}^{(i)}$. 
  For each $i$, we denote by $\Upsilon^{(i,j)}$, where $1 \le j
  \le m_i$, the $m_i$ $C^1$-diffeomorphisms on $\mathcal{O}^{(i)}$ discussed above.
  With this notation, from \eqref{good-states-by-phi-and-Upsilon} we have 
  \begin{equation}
    \forall~\bar{z} \in \mathcal{O}_i, \qquad \Big\{z'\in \Phi(\bar{z})\,\Big|\, \Pi(z')\not\in C_{M,\Pi(\bar{z})}\Big\} = 
    \Big\{\Upsilon^{(i,1)}(\bar{z}), \ldots, \Upsilon^{(i,m_i)}(\bar{z})\Big\}\,.
    \label{good-states-by-Upsilon}
  \end{equation}
  We define $f^\epsilon(z,z')=f(z,z')\mathbf{1}_{T^*\Sigma\setminus
  \widetilde{\mathcal{B}}_0^\epsilon}(z) \mathbf{1}_{T^*\Sigma\setminus
  \widetilde{\mathcal{B}}_0^\epsilon}(z')
  =f(z,z') \mathbf{1}_{T^*\Sigma\setminus \mathcal{O}^{(0)}}(z)
  \mathbf{1}_{T^*\Sigma\setminus \mathcal{O}^{(0)}}(z')$, for all $z,z' \in T^*\Sigma$.
  Since the support of $\rho_0$ is contained in $\mathcal{O}^{(0)}$, we have
  \begin{equation}
  \forall~z,z' \in T^*\Sigma, \qquad
  f^\epsilon(z,z') \rho_0(z)= f^\epsilon(z,z') \rho_0(z') = 0\,.
    \label{feps-times-rho0-is-zero}
  \end{equation}
  For $f^\epsilon$, we compute the left hand side of \eqref{change-of-variables-for-phi} as
    \begin{align}
      & \int_{T^*\Sigma}\left[\sum_{z'\in \Phi(z)}
      \mathbf{1}_{\widetilde{\mathcal{D}}_\Phi}\big(z, z'\big)
      f^\epsilon\left(z, z'\right) \right] \,\sigma_{T^*\Sigma}(dz)\notag \\
    & = \sum_{i=0}^n\sum_{i'=0}^n\int_{T^*\Sigma}\left[\sum_{
    z'\in \Phi(z),\Pi(z')\not\in C_{M,\Pi(z)}}
    \mathbf{1}_{\widetilde{\mathcal{D}}_\Phi}\big(z,z'\big)
    \rho_{i}(z)\rho_{i'}(z') f^\epsilon\left(z, z'\right) \right] \,\sigma_{T^*\Sigma}(dz)\notag \\
      & = \sum_{i=1}^n\sum_{i'=1}^n\int_{\mathcal{O}^{(i)}}\left[\sum_{
    z'\in \Phi(z),\Pi(z')\not\in C_{M,\Pi(z)}}
    \mathbf{1}_{\widetilde{\mathcal{D}}_\Phi}\big(z,z'\big)
    \rho_{i}(z)\rho_{i'}(z') f^\epsilon\left(z, z'\right) \right] \,\sigma_{T^*\Sigma}(dz)\notag \\
    & =
    \sum_{i=1}^n\sum_{i'=1}^n\sum_{j=1}^{m_{i}}\int_{\mathcal{O}^{(i)}}\left[
      \mathbf{1}_{\widetilde{\mathcal{D}}_\Phi}\big(z,\Upsilon^{(i,j)}(z)\big)
    \rho_{i}(z)\rho_{i'}(\Upsilon^{(i,j)}(z)) f^\epsilon\left(z,
 \Upsilon^{(i,j)}(z)\right)\right]
    \,\sigma_{T^*\Sigma}(dz) \label{int-feps-lemma-for-phase-space-1} \\
    & =
    \sum_{i=1}^n\sum_{i'=1}^n\sum_{j=1}^{m_{i}}\int_{\Upsilon^{(i,j)}(\mathcal{O}^{(i)})}\left[\mathbf{1}_{\widetilde{\mathcal{D}}_\Phi}\big((\Upsilon^{(i,j)})^{-1}(z),z\big)
    \rho_{i}((\Upsilon^{(i,j)})^{-1}(z))\rho_{i'}(z)
      f^\epsilon\left((\Upsilon^{(i,j)})^{-1}(z), z\right) \right] \,\sigma_{T^*\Sigma}(dz)\notag \\
    & =
    \sum_{i=1}^n\sum_{i'=1}^n\sum_{j=1}^{m_{i}}\int_{\Upsilon^{(i,j)}(\mathcal{O}^{(i)})\cap
      \mathcal{O}^{(i')}}\left[\mathbf{1}_{\widetilde{\mathcal{D}}_\Phi}\big((\Upsilon^{(i,j)})^{-1}(z),z\big)
    \rho_{i}((\Upsilon^{(i,j)})^{-1}(z))\rho_{i'}(z)
      f^\epsilon\left((\Upsilon^{(i,j)})^{-1}(z), z\right) \right]
      \,\sigma_{T^*\Sigma}(dz)\notag \,.
  \end{align}
  In the first equality of \eqref{int-feps-lemma-for-phase-space-1}, we have used
  $\sum_{i=0}^n\rho_i(z)=1$ for all $z \in T^*\Sigma$ and that  
$\mathbf{1}_{\widetilde{\mathcal{D}}_\Phi}(z,z')=0$ when $z'\in \Phi(z)$ and $\Pi(z')\in C_{M,\Pi(z)}$;
   in the second one, we have used~\eqref{feps-times-rho0-is-zero} 
  and that the support of~$\rho_i$ is contained in $\mathcal{O}^{(i)}$; in the third equality
  of~\eqref{int-feps-lemma-for-phase-space-1}, we have used 
  \eqref{good-states-by-Upsilon} and the fact that $m(z)=m_i$ is constant on
  $\mathcal{O}^{(i)}$ for $i=1,\ldots, n$; in the fourth equality, we have
  used the fact that $\Upsilon^{(i,j)}$ is a $C^1$-diffeomorphism
  on~$\mathcal{O}^{(i)}$ and the change of variables $z\leftarrow (\Upsilon^{(i,j)})^{-1}(z)$, together with the fact that 
  $|\det(D\Upsilon^{(i,j)})|\equiv 1$ (see the third item of
  Proposition~\ref{prop-differentiability-hmc}); the last equality of~\eqref{int-feps-lemma-for-phase-space-1} follows since
the support of $\rho_{i'}$ is contained in $\mathcal{O}^{(i')}$.

Now we want to relate the preimages of $z$ in the last line of 
\eqref{int-feps-lemma-for-phase-space-1} to the images of $z$.
Fix the indices $i,i', j$ in the last line of \eqref{int-feps-lemma-for-phase-space-1}.
For any $z\in \Upsilon^{(i,j)}(\mathcal{O}^{(i)})\cap \mathcal{O}^{(i')}$ such that
$\mathbf{1}_{\widetilde{\mathcal{D}}_\Phi}\big(z',z\big) \rho_{i}(z') \neq 0$, 
where $z'=(\Upsilon^{(i,j)})^{-1}(z)$, we have $z' \in \mathcal{O}^{(i)}$ since the support of
$\rho_i$ is in $\mathcal{O}^{(i)}$. The definition~\eqref{admissible-set-d-r-subset} of 
$\widetilde{\mathcal{D}}_\Phi$ implies that $z' \in \Phi(z)$ and
$\Pi(z')\not\in C_{M,\Pi(z)}$, therefore,
using \eqref{good-states-by-Upsilon} there is an index $j' \in \{1,\dots,m_{i'}\}$ such that $z'=\Upsilon^{(i',j')}(z)$. For any other index $j'_1 \in \{1,\dots,m_{i'}\}$ with $j'_1\neq j'$, 
we claim that $\mathbf{1}_{\widetilde{\mathcal{D}}_\Phi}\big(\widetilde{z},z\big)
\rho_{i}(\widetilde{z})=0$ with $\widetilde{z} = \Upsilon^{(i',j'_1)}(z)\neq
z'$. Indeed, suppose that $\mathbf{1}_{\widetilde{\mathcal{D}}_\Phi}\big(\widetilde{z},z\big)
\rho_{i}(\widetilde{z})\neq 0$, then $\widetilde{z}\in \mathcal{O}^{(i)}$ and,
by definition of~$\widetilde{\mathcal{D}}_\Phi$ and using \eqref{good-states-by-Upsilon}, there is an index $j_1 \in \{1,\dots,m_i\}$
such that $z=\Upsilon^{(i, j_1)}(\widetilde{z})$. When $j_1=j$, 
$\Upsilon^{(i,j)}$ maps both~$z'$ and $\widetilde{z}$ to $z$, which is in
contradiction with the fact that $\Upsilon^{(i,j)}$ is a $C^1$-diffeomorphism on~$\mathcal{O}^{(i)}$.
When $j_1\neq j$, we would have $z \in \Upsilon^{(i,j)}(\mathcal{O}^{(i)}) \cap
\Upsilon^{(i,j_1)}(\mathcal{O}^{(i)})\neq \emptyset$, which is in
contradiction with our definition of the neighborhood $\mathcal{O}^{(i)}$. Therefore, 
for given $z\in \Upsilon^{(i,j)}(\mathcal{O}^{(i)})\cap \mathcal{O}^{(i')}$, 
there exists one and only one index $j' \in \{1,\dots, m_{i'}\}$ such that 
$\mathbf{1}_{\widetilde{\mathcal{D}}_\Phi}\big(\Upsilon^{(i',j')}(z),z\big)
\rho_{i}(\Upsilon^{(i',j')}(z))\neq 0$.
Moreover, this index~$j'$ satisfies that $\Upsilon^{(i',j')}(z) =
(\Upsilon^{(i,j)})^{-1}(z)$.
Using this fact, we can replace the preimages in the last line of \eqref{int-feps-lemma-for-phase-space-1}
by including a summation over the index~$j'$ among $1,\ldots, m_{i'}$ and obtain
  \begin{align*}
      & \int_{T^*\Sigma}\left[\sum_{z'\in \Phi(z)}
      \mathbf{1}_{\widetilde{\mathcal{D}}_\Phi}\big(z, z'\big)
      f^\epsilon\left(z, z'\right) \right] \,\sigma_{T^*\Sigma}(dz)\notag \\
    & =
    \sum_{i=1}^n\sum_{i'=1}^n\sum_{j=1}^{m_{i}} \sum_{j'=1}^{m_{i'}}
    \int_{\Upsilon^{(i,j)}(\mathcal{O}^{(i)})\cap
      \mathcal{O}^{(i')}}\left[\mathbf{1}_{\widetilde{\mathcal{D}}_\Phi}\big(\Upsilon^{(i',j')}(z),z\big)
    \rho_{i}\left(\Upsilon^{(i',j')}(z)\right)\rho_{i'}(z)
      f^\epsilon\left(\Upsilon^{(i',j')}(z), z\right) \right] \,\sigma_{T^*\Sigma}(dz)\notag \\
    & =
    \sum_{i=1}^n\sum_{i'=1}^n \sum_{j'=1}^{m_{i'}}
    \int_{\cup_{j=1}^{m_i}\big(\Upsilon^{(i,j)}(\mathcal{O}^{(i)})\cap
      \mathcal{O}^{(i')}\big)}\left[\mathbf{1}_{\widetilde{\mathcal{D}}_\Phi}\big(\Upsilon^{(i',j')}(z),z\big)
    \rho_{i}\left(\Upsilon^{(i',j')}(z)\right)\rho_{i'}(z)
      f^\epsilon\left(\Upsilon^{(i',j')}(z), z\right) \right] \,\sigma_{T^*\Sigma}(dz)\notag 
  \end{align*}
  where in the second equality we have used the fact
  that~$\Upsilon^{(i,j)}(\mathcal{O}^{(i)})\cap \mathcal{O}^{(i')}$ are
  disjoint subsets of $\mathcal{O}^{(i')}$ for different indices~$j$, thanks to the definitions of the neighborhoods $\mathcal{O}^{(i)}$.  
Moreover, for any $z\in \mathcal{O}^{(i')} \setminus \big(\cup_{j=1}^{m_i}
\Upsilon^{(i,j)}(\mathcal{O}^{(i)}) \big)$, the function
$\mathbf{1}_{\widetilde{\mathcal{D}}_\Phi}\big(\Upsilon^{(i',j')}(z),z\big)
\rho_{i}(\Upsilon^{(i',j')}(z))$ must be zero for all~$j'$, since there is no function
$\Upsilon^{(i,j)}$ that maps states in $\mathcal{O}^{(i)}$ to $z$.
Therefore, we can enlarge the integration domain to $\mathcal{O}^{(i')}$ and obtain
  \begin{align}
      & \int_{T^*\Sigma}\left[\sum_{z'\in \Phi(z)}
      \mathbf{1}_{\widetilde{\mathcal{D}}_\Phi}\big(z, z'\big)
      f^\epsilon\left(z, z'\right) \right] \,\sigma_{T^*\Sigma}(dz)\notag \\
    & = \sum_{i=1}^n\sum_{i'=1}^n \sum_{j'=1}^{m_{i'}}
    \int_{\mathcal{O}^{(i')}}\left[\mathbf{1}_{\widetilde{\mathcal{D}}_\Phi}\big(\Upsilon^{(i',j')}(z),z\big)
    \rho_{i}\left(\Upsilon^{(i',j')}(z)\right)\rho_{i'}(z)
      f^\epsilon\left(\Upsilon^{(i',j')}(z), z\right) \right]
      \,\sigma_{T^*\Sigma}(dz)\notag \\
    & = \sum_{i=0}^n\sum_{i'=1}^n 
    \int_{\mathcal{O}^{(i')}}\sum_{z'\in \Phi(z), \Pi(z')\not\in
    C_{M,\Pi(z)}}\left[\mathbf{1}_{\widetilde{\mathcal{D}}_\Phi}\big(z',z\big)
    \rho_{i}(z')\rho_{i'}(z)
      f^\epsilon\left(z', z\right) \right]
      \,\sigma_{T^*\Sigma}(dz) \label{change-of-variable-phi-feps}\\
    & = \sum_{i'=1}^n 
    \int_{\mathcal{O}^{(i')}}\sum_{z'\in \Phi(z), \Pi(z')\not\in
    C_{M,\Pi(z)}}\left[\mathbf{1}_{\widetilde{\mathcal{D}}_\Phi}\big(z',z\big)
    \rho_{i'}(z) f^\epsilon\left(z', z\right) \right]
      \,\sigma_{T^*\Sigma}(dz) \notag\\
    & = \sum_{i'=0}^n \int_{T^*\Sigma}\sum_{z'\in \Phi(z)}\left[\mathbf{1}_{\widetilde{\mathcal{D}}_\Phi}\big(z',z\big)
    \rho_{i'}(z) f^\epsilon\left(z', z\right) \right]
      \,\sigma_{T^*\Sigma}(dz) \notag\\
    & = \int_{T^*\Sigma}\sum_{z'\in
    \Phi(z)}\left[\mathbf{1}_{\widetilde{\mathcal{D}}_\Phi}\big(z,z'\big)
     f^\epsilon\left(z', z\right) \right] \,\sigma_{T^*\Sigma}(dz)\,, \notag
\end{align}
where in the second equality we have used \eqref{good-states-by-Upsilon} and \eqref{feps-times-rho0-is-zero};
in the third equality we have used $\sum_{i=0}^n\rho_i(z)=1$ for all $z$; in
the fourth equality we have used the fact that the support of $\rho_{i'}$ is contained
in $\mathcal{O}^{(i')}$, the fact that $\mathbf{1}_{\widetilde{\mathcal{D}}_\Phi}(z',z)=0$ when $z'\in \Phi(z)$ and $\Pi(z')\in C_{M,\Pi(z)}$, as well as \eqref{feps-times-rho0-is-zero}; in the last equality we have used again
$\sum_{i'=0}^n\rho_{i'}(z)=1$ for all $z$, as well as the symmetry
$\mathbf{1}_{\widetilde{\mathcal{D}}_\Phi}(z',z)
=\mathbf{1}_{\widetilde{\mathcal{D}}_\Phi}(z,z')$. Therefore, we have proved
\eqref{change-of-variables-for-phi} for the function $f^\epsilon$.

Finally, we consider the limit $\epsilon \rightarrow 0$.
Note that, for any $z,z'\in T^*\Sigma$, the equality $\mathbf{1}_{\widetilde{\mathcal{D}}_\Phi}(z,z') =1$
implies that $z,z'\in \cup_{l\ge 1}\widetilde{\mathcal{B}}_l=T^*\Sigma\setminus \widetilde{\mathcal{B}}_0$.
In other words, we have 
\begin{equation}
\forall~ z,z'\in T^* \Sigma, \qquad \mathbf{1}_{\widetilde{\mathcal{D}}_\Phi}(z,z') =
  \mathbf{1}_{\widetilde{\mathcal{D}}_\Phi}(z,z')
  \mathbf{1}_{T^*\Sigma\setminus \widetilde{\mathcal{B}}_0}(z)
  \mathbf{1}_{T^*\Sigma\setminus \widetilde{\mathcal{B}}_0}(z')\,.
  \label{an-identity-for-indicator-function}
\end{equation}
    Therefore, using \eqref{properties-of-sets-b-eps-hmc} and the definition of $f^\epsilon$, we obtain, for any~$z \in T^*\Sigma$,
 \[
   \begin{aligned}
     \lim_{\epsilon \rightarrow 0}&\sum_{z'\in \Phi(z)} \mathbf{1}_{\widetilde{\mathcal{D}}_\Phi}(z,z') f^\epsilon(z, z')\\
    & = 
     \sum_{z'\in \Phi(z)}\left[ \mathbf{1}_{\widetilde{\mathcal{D}}_\Phi}(z,z') f(z, z') \lim_{\epsilon \rightarrow 0} \left(\mathbf{1}_{T^*\Sigma\setminus
\widetilde{\mathcal{B}}^\epsilon_0}(z) \mathbf{1}_{T^*\Sigma\setminus
     \widetilde{\mathcal{B}}^\epsilon_0}(z')\right)\right]\\
     &=
     \sum_{z'\in \Phi(z)}\left[ \mathbf{1}_{\widetilde{\mathcal{D}}_\Phi}(z,z') f(z, z')
      (1- \mathbf{1}_{\widetilde{\mathcal{B}}_0}(z))
      (1-\mathbf{1}_{\widetilde{\mathcal{B}}_0}(z'))\right]\\
     &= \sum_{z'\in \Phi(z)} \left[\mathbf{1}_{\widetilde{\mathcal{D}}_\Phi}(z,z')  \mathbf{1}_{T^*\Sigma\setminus \widetilde{\mathcal{B}}_0}(z)
  \mathbf{1}_{T^*\Sigma\setminus \widetilde{\mathcal{B}}_0}(z') f(z,
     z')\right]\\
     &= \sum_{z'\in\Phi(z)} \mathbf{1}_{\widetilde{\mathcal{D}}_\Phi}(z,z') f(z, z')\,,
   \end{aligned}
  \]
  where we have used \eqref{an-identity-for-indicator-function} to obtain the last equality.
  Moreover, from the same calculation, we also have the identity $\lim_{\epsilon
  \rightarrow 0}\sum_{z'\in \Phi(z)}
  \mathbf{1}_{\widetilde{\mathcal{D}}_\Phi}(z,z') f^\epsilon(z', z)
  = \sum_{z'\in\Phi(z)} \mathbf{1}_{\widetilde{\mathcal{D}}_\Phi}(z,z') f(z', z)$, for all $z \in T^*\Sigma$ .
 We thus obtain \eqref{change-of-variables-for-phi} by taking the limit~$\epsilon \rightarrow 0$ 
    in \eqref{change-of-variables-for-phi} for the functions $f^\epsilon$ and applying the dominated convergence theorem. 
\end{proof}

\section*{Acknowledgements}

This study was inspired by the work of Paul Breiding and Orlando
Marigliano~\cite{breidling-random-point}. The authors thank them for
fruitful discussions on the computations of multiple projections on
submanifolds. WZ is funded by the Deutsche Forschungsgemeinschaft (DFG, German
Research Foundation) under Germany's Excellence Strategy --- The Berlin
Mathematics Research Center MATH+ (EXC-2046/1, project ID: 390685689). TL and
GS have received funding from the European Research Council (ERC) under the
European Union's Horizon 2020 research and innovation programme (grant
agreement No 810367). TL and GS also benefited from the scientific environment
of the Laboratoire International Associ\'e between the Centre National de la
Recherche Scientifique and the University of Illinois at Urbana-Champaign.
Finally, part of this work was done while TL was visiting the CNRS-ICL International Research Laboratory. TL would like to thank the Leverhulme Trust for funding as well as the Department of Mathematics at Imperial College of London for its hospitality.


\end{document}